\newcommand{\de}{\partial}
\newcommand{\db}{\overline{\partial}}
\newcommand{\ddt}{\frac{\partial}{\partial t}}
\newcommand{\ddb}{\partial \ov{\partial}}
\newcommand{\Tmax}{T_{\textrm{max}}}
\newcommand{\ddbar}{\sqrt{-1} \partial \overline{\partial}}
\newcommand{\Ric}{\mathrm{Ric}}
\newcommand{\ov}[1]{\overline{#1}}
\newcommand{\mn}{\sqrt{-1}}
\newcommand{\tr}[2]{\mathrm{tr}_{#1}{#2}}
\newcommand{\ti}[1]{\tilde{#1}}
\newcommand{\vp}{\varphi}
\newcommand{\ve}{\varepsilon}
\newcommand{\RC}{R^{\textrm{C}}}
\numberwithin{equation}{section}
\renewcommand{\leq}{\leqslant}
\renewcommand{\geq}{\geqslant}
\renewcommand{\le}{\leqslant}
\renewcommand{\ge}{\geqslant}
\begin{document}
\newcounter{remark}
\newcounter{theor}
\setcounter{remark}{0}
\setcounter{theor}{1}
\newtheorem{claim}{Claim}
\newtheorem{theorem}{Theorem}[section]
\newtheorem{lemma}[theorem]{Lemma}
\newtheorem{conj}[theorem]{Conjecture}
\newtheorem{corollary}[theorem]{Corollary}
\newtheorem{proposition}[theorem]{Proposition}
\newtheorem{question}{question}[section]
\newtheorem{defn}{Definition}[theor]

\newenvironment{example}[1][Example]{\addtocounter{remark}{1} \begin{trivlist}
\item[\hskip
\labelsep {\bfseries #1  \thesection.\theremark}]}{\end{trivlist}}

\title[Evolution of a Hermitian metric by its Chern-Ricci form]{On the evolution of a Hermitian metric by its Chern-Ricci form}

\author[V. Tosatti]{Valentino Tosatti$^{*}$}
\thanks{$^{*}$Supported in part by NSF grant DMS-1005457 and by a Blavatnik Award for Young Scientists.
Part of this work was carried out
while the first-named author was visiting the Mathematical Science
Center of Tsinghua University in Beijing, which he would like to
thank for its hospitality.}
\address{Department of Mathematics, Columbia University, 2990 Broadway, New York, NY 10027}
\curraddr{Department of Mathematics, Northwestern University, 2033 Sheridan Road, Evanston, IL 60208}

\author[B. Weinkove]{Ben Weinkove$^{\dagger}$}
\thanks{$^{\dagger}$Supported in part by NSF grant DMS-1105373.}
\address{Mathematics Department, University of California San Diego, 9500 Gilman Drive \#0112, La Jolla, CA 92093}
\curraddr{Department of Mathematics, Northwestern University, 2033 Sheridan Road, Evanston, IL 60208}
\begin{abstract}
We consider the evolution of a Hermitian metric on a compact complex manifold by its Chern-Ricci form.  This is an evolution equation first studied by M. Gill, and coincides with the K\"ahler-Ricci flow if the initial metric is K\"ahler.  We find
 the maximal existence time for the flow in terms of the initial data.  We investigate the behavior of the flow on complex surfaces
 when the initial metric is Gauduchon, on complex manifolds with negative first Chern class, and on some Hopf manifolds.  Finally, we discuss a new estimate for the complex Monge-Amp\`ere equation on Hermitian manifolds.
 \end{abstract}

\maketitle

\maketitle

\section{Introduction}

Let $(M, J)$ be a compact complex manifold of complex dimension $n$.  Let $g_0$ be a Hermitian metric on $M$, that is a  Riemannian metric $g_0$ satisfying $g_0(JX, JY)=g_0(X,Y)$ for all vectors $X$, $Y$.  In local complex coordinates $(z_1, \ldots, z_n)$, the metric $g_0$ is given  by a Hermitian matrix with components $(g_0)_{i \ov{j}}$.  Associated to $g_0$ is a real $(1,1)$ form $\omega_0 = \sqrt{-1} (g_0)_{i \ov{j}} dz_i \wedge d\ov{z}_j$, which we will also often refer to as a Hermitian metric.

Given the success of Hamilton's Ricci flow \cite{Ha} in establishing deep results  in the setting of topological, smooth and Riemannian manifolds (see e.g. \cite{BS, Ha2, P1}),
it is natural to ask whether there is a  parabolic flow of metrics on $M$ which starts at $g_0$, preserves the Hermitian condition and reveals information about the structure of $M$ as a complex manifold.
    In the case when $g_0$ is K\"ahler (meaning $d\omega_0=0$), the Ricci flow does precisely this.  It gives a flow of K\"ahler metrics whose behavior is deeply intertwined with the complex and algebro-geometric properties of $M$ (see \cite{Ca, CW, FIK, PSSW, PS2, ST, ST2, ST3, SW, SW2,   SW4, SY, Sz, Ti, TZ, To1, Ts}, for example).

However, if $g_0$ is not K\"ahler, then in general the Ricci flow  does not preserve the Hermitian condition $g(JX, JY) = g(X,Y)$.  Alternative parabolic flows on complex manifolds which do preserve the Hermitian property have been proposed by Streets-Tian \cite{StT, StT2} and also Liu-Yang \cite{LY}.

This paper is concerned with another such flow, first investigated by M. Gill \cite{G}, which we will call the \emph{Chern-Ricci flow}:
\begin{equation} \label{crf}
\ddt{} \omega = - \Ric(\omega), \quad \omega|_{t=0}=\omega_0,
\end{equation}
where here $\Ric(\omega)$ is the \emph{Chern-Ricci form} (sometimes called the \emph{first Chern form}) associated to the Hermitian metric $g$, which in local coordinates is given by
\begin{equation}
\Ric(\omega) = - \ddbar \log \det g.
\end{equation}
In the case when $g$ is K\"ahler, $\Ric(\omega) = \sqrt{-1} R_{i \ov{j}} dz_i \wedge d\ov{z}_j$, where $R_{i \ov{j}}$ is the usual Ricci curvature of $g$.  Thus if $g_0$ is K\"ahler,  (\ref{crf}) coincides with the K\"ahler-Ricci flow.  In general, $\Ric(\omega)$ does not have a simple relationship with the Ricci curvature of $g$.  The Bott-Chern cohomology class determined by the closed form $\Ric(\omega)$ is denoted by $c^{\textrm{BC}}_1(M)$.  We call this the \emph{first Bott-Chern class} of $M$.  It is independent of the choice of Hermitian metric $\omega$ (see section \ref{section:preliminaries} for more details).

The following result for the Chern-Ricci flow was proved by Gill  \cite{G}:

\begin{theorem}[Gill] \label{theorem:gill}  If $c_1^{\emph{BC}}(M)=0$ then, for any initial metric $\omega_0$, there exists a solution $\omega(t)$ to the Chern-Ricci flow (\ref{crf}) for all time and  the metrics $\omega(t)$ converge smoothly as $t \rightarrow \infty$ to a Hermitian metric $\omega_{\infty}$ satisfying $\emph{Ric}(\omega_{\infty})=0$.
\end{theorem}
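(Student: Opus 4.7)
The plan is to reduce the Chern-Ricci flow to a scalar parabolic complex Monge--Amp\`ere equation and then derive uniform a priori estimates in the spirit of Cao's proof for the K\"ahler--Ricci flow, adapting each step to accommodate the torsion of a non-K\"ahler initial metric. Since $c_1^{\mathrm{BC}}(M) = 0$, one may choose a smooth volume form $\Omega$ on $M$ satisfying $\ddbar \log \Omega = 0$. The right-hand side of (\ref{crf}) is $\ddbar$-exact, so integrating in $t$ yields a representation $\omega(t) = \omega_0 + \ddbar \vp(t)$ for a smooth family $\vp(t)$ with $\vp(0)=0$, and the flow becomes equivalent to the parabolic Monge--Amp\`ere equation
\[
\ddt{}\vp \;=\; \log \frac{(\omega_0 + \ddbar \vp)^n}{\Omega},\qquad \omega_0 + \ddbar \vp > 0, \qquad \vp(0) = 0.
\]
Standard parabolic theory provides a smooth solution on a maximal interval $[0,\Tmax)$.

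To push $\Tmax$ to $\infty$ I need uniform-in-$t$ estimates on $\vp$ in every $C^k$ norm. Differentiating the scalar equation in $t$ gives $\ddt{}\dot\vp = \tr{\omega(t)}{\ddbar \dot\vp}$, so the maximum principle yields $\|\dot\vp\|_{L^\infty} \le \|\dot\vp(0)\|_{L^\infty}$ uniformly in $t$, equivalent to two-sided control of the volume form $\omega(t)^n/\Omega$. Given in addition a uniform $C^0$ bound on $\vp$ itself (the hard step, discussed below), an Aubin--Yau--Cherrier-type computation applied to $\log \tr{\omega_0}{\omega} - A\vp$ with $A$ large enough to absorb the torsion terms produced by $d\omega_0 \neq 0$ yields a uniform second-order estimate $C^{-1}\omega_0 \le \omega(t) \le C\omega_0$. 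Evans--Krylov together with parabolic Schauder bootstrapping then produces uniform $C^k$ bounds for every $k$, extending the flow to all time and giving subsequential smooth convergence $\omega(t_j) \to \omega_\infty$. To upgrade this to convergence of the full flow and identify the limit, I would show that $\dot\vp \to c_\infty$ exponentially fast by exploiting the linear evolution of $\dot\vp$ together with monotonicity of an energy such as $\int_M (\dot\vp - \bar{\dot\vp})^2 \omega^n$; passing to the limit in the flow then yields $\Ric(\omega_\infty) = -\ddbar \dot\vp_\infty = 0$.

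The main obstacle is the uniform $C^0$ bound on $\vp$. Yau's classical Moser iteration, used in the K\"ahler setting, integrates by parts repeatedly against powers of $\vp$ and relies crucially on $d\omega_0=0$ to discard torsion boundary terms; in the Hermitian case those terms are of the same order as the main ones and cannot be dropped. The remedy is to apply a Hermitian $L^\infty$ estimate for the complex Monge--Amp\`ere equation of the type developed by Tosatti--Weinkove (and discussed in the final section of this paper) to each time-slice, with constants depending only on the already-controlled $\|\dot\vp\|_{L^\infty}$ and on $(M,\omega_0)$. This reduces the parabolic $C^0$ estimate to a purely elliptic Hermitian estimate and closes the chain of a priori bounds needed for both long-time existence and convergence.
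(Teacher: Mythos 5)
The paper does not actually prove Theorem \ref{theorem:gill}: it is quoted from Gill \cite{G}, with Section \ref{section:preliminaries} supplying the dictionary between the Chern--Ricci flow and Gill's parabolic Monge--Amp\`ere equation. Your outline is essentially a reconstruction of that cited proof: your volume form $\Omega$ with $\ddbar\log\Omega=0$ is the paper's choice of $F$ with $\de\ov{\de}\log\det\hat{g}=\de\ov{\de}F$; the maximum-principle bound on $\dot{\vp}$, the uniform bound on $\vp$ obtained by applying the elliptic estimate of \cite{TW2} to each time slice (this is exactly what the paper means when it says ``the estimate of \cite{TW2} is used in the proof of Theorem \ref{theorem:gill}''), and the Evans--Krylov/Schauder bootstrap all match Gill. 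One warning about your second-order step, though: the torsion is \emph{not} ``absorbed by taking $A$ large.'' By Proposition \ref{prop:bigcalc}, the torsion enters the evolution of $\log\tr{g_0}{g}$ through the term $\frac{2}{(\tr{g_0}{g})^2}\mathrm{Re}\left(g^{\ov{\ell}k}(T_0)^p_{kp}\partial_{\ov{\ell}}\tr{g_0}{g}\right)$, which at a maximum of $\log\tr{g_0}{g}-A\vp$ becomes a term linear in $\partial\vp$ multiplied by $A$; since no gradient bound on $\vp$ is available at this stage, no choice of the constant $A$ controls it. This is precisely why \cite{TW1} could close the analogous elliptic argument only for $n=2$ or balanced metrics, and why part (iii) of Lemma \ref{lemma:Testimates} --- which proves exactly the estimate you need --- adds the Phong--Sturm term $1/(\vp+\tilde{C})$ of \cite{PS}, whose contribution $-2|\partial\vp|^2_g/(\vp+\tilde{C})^3$ absorbs the offending term. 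The step is true (Gill has his own proof), but not by the mechanism you describe.

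The genuine gap is your convergence argument. Monotonicity of $\int_M(\dot{\vp}-\ov{\dot{\vp}})^2\,\omega^n$ is destroyed by the very phenomenon you invoke when rejecting Moser iteration: since $d\omega\neq 0$, the complex Laplacian of $\omega(t)$ is not self-adjoint with respect to $\omega(t)^n$. Integrating by parts, $\int_M u\,\Delta u\,\omega^n$ differs from $-\int_M|\partial u|^2_g\,\omega^n$ by a correction proportional to $\int_M\sqrt{-1}\,u^2\,\de\ov{\de}(\omega^{n-1})$, which has no sign, is of the same size as the quantity you want to make decay, and vanishes only when $\omega(t)$ is Gauduchon --- a condition that fails for general $\omega_0$ and, when $n>2$, has no reason to persist along the flow even if it holds initially; the variation $\ddt{}\omega^n=(\Delta\dot{\vp})\,\omega^n$ produces further unsigned terms of the same type. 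So no decay follows from this functional. Gill's proof, following Cao \cite{Ca}, avoids integration by parts altogether: once the uniform $C^\infty$ estimates are in place, $\sup_M\dot{\vp}$ is non-increasing and $\inf_M\dot{\vp}$ is non-decreasing by the maximum principle, and a Harnack inequality for the linear, uniformly parabolic equation $\ddt{}\dot{\vp}=\Delta\dot{\vp}$ forces $\mathrm{osc}_M\,\dot{\vp}(t)$ to decay exponentially; hence $\dot{\vp}\to b$ (a constant) smoothly, $\vp-bt$ converges smoothly, and $\Ric(\omega_\infty)=-\lim_{t\to\infty}\ddbar\dot{\vp}=0$. If you replace your energy step by this pointwise oscillation-decay argument, your proposal becomes a faithful account of the proof the paper cites.
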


Moreover, the Hermitian metric $\omega_{\infty}$ is the unique Chern-Ricci flat metric on $M$ of the form $\omega_{\infty} = \omega_0 + \ddbar \varphi$ for some function $\varphi$.  The Chern-Ricci flat metrics were already known to exist  \cite{Ch, TW2}, and the estimate of \cite{TW2} is used in the proof of Theorem \ref{theorem:gill}.
  If $\omega_0$ is K\"ahler then Theorem \ref{theorem:gill} is due to Cao \cite{Ca}, with $\omega_{\infty}$ being the Ricci-flat metric of Yau \cite{Y}.
In Section \ref{section:preliminaries} below, we discuss the work of Gill \cite{G} further, and also explain
 how the Chern-Ricci flow compares with some other parabolic flows on complex manifolds studied in the literature.

Our first result characterizes the maximal existence time for a solution to the Chern-Ricci flow using information from the initial Hermitian metric $\omega_0$.  First observe that the flow equation (\ref{crf}) may be rewritten as
$$\ddt{} \omega = - \Ric(\omega_0) + \sqrt{-1} \partial \ov{\partial} \theta(t), \quad \textrm{with } \theta (t)= \log \frac{\det g(t)}{\det g_0}.$$
Thus, as long as the flow exists, the solution $\omega(t)$ starting at $\omega_0$ must be of the form $\omega(t) = \alpha_t + \sqrt{-1} \partial \ov{\partial} \Theta$, for some function $\Theta=\Theta(t)$, with
\begin{equation} \label{alphat}
 \alpha_t = \omega_0 - t \Ric(\omega_0).
\end{equation}

  Now define a number
 $T=T(\omega_0)$ with $0< T \le \infty$ by
\begin{equation} \label{T}
T = \sup \{ t \ge 0 \ | \ \exists \psi \in C^{\infty}(M) \textrm{ with }\alpha_t + \ddbar \psi >0 \}.
\end{equation}
By the observation above, a solution to (\ref{crf}) cannot exist beyond time $T$.  We prove:

\begin{theorem} \label{theorem:maximal}
There exists a unique maximal solution to the Chern-Ricci flow (\ref{crf}) on $[0,T)$.
\end{theorem}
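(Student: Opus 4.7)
The plan is to reduce the Chern-Ricci flow to a scalar parabolic complex Monge-Amp\`ere equation with a reference form that varies with $t$ but is genuinely positive. Writing $\omega(t) = \alpha_t + \ddbar \vp$ with $\vp|_{t=0}=0$, the flow \eqref{crf} is equivalent to the scalar equation
\begin{equation*}
\ddt \vp = \log \frac{(\alpha_t + \ddbar \vp)^n}{\omega_0^n}, \qquad \alpha_t + \ddbar \vp > 0.
\end{equation*}
Since $\omega_0 > 0$ and $\alpha_0 = \omega_0$, this is strictly parabolic at $t=0$ and so standard parabolic theory yields a unique smooth solution on some short time interval. The content of Theorem \ref{theorem:maximal} is then to show that the flow extends as long as $\alpha_t$ can be made positive by a $\ddbar$-exact perturbation.

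Fix $T' < T$ and, using the definition of $T$, choose $T' < T'' < T$ and $\psi \in C^\infty(M)$ with $\alpha_{T''} + \ddbar\psi > 0$. A direct computation gives
\begin{equation*}
\hat\omega_t := \alpha_t + \ddbar\left(\tfrac{t}{T''} \psi\right) = \left(1-\tfrac{t}{T''}\right)\omega_0 + \tfrac{t}{T''}(\alpha_{T''} + \ddbar\psi),
\end{equation*}
which is a smooth family of genuine Hermitian metrics on $[0,T'']$. Setting $\ti\vp = \vp - \tfrac{t}{T''}\psi$ rewrites the flow as $\omega = \hat\omega_t + \ddbar \ti\vp$ with
\begin{equation*}
\ddt \ti\vp = \log \frac{(\hat\omega_t + \ddbar \ti\vp)^n}{\omega_0^n} - \tfrac{1}{T''}\psi, \qquad \hat\omega_t + \ddbar \ti\vp > 0,
\end{equation*}
i.e.\ a parabolic complex Monge-Amp\`ere equation with a positive reference metric on $[0,T'')$.

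The core task is to establish uniform $C^\infty$ a priori estimates for smooth solutions on $[0,T']$, independent of how close $T'$ is to $T''$. I would proceed in the familiar order used by Cao in the K\"ahler case and adapted to the Hermitian setting by Gill and by the present authors: first, $C^0$ bounds on $\ti\vp$ via the parabolic maximum principle applied to $\ti\vp \pm A t$, giving also a bound on $\ddt \ti\vp$ (equivalently, a two-sided bound on $\log(\omega^n/\omega_0^n)$) by differentiating the equation in $t$ and applying the maximum principle to $\ddt \ti\vp$ and to $t \ddt \ti\vp - \ti\vp$; second, a uniform upper bound for $\tr{\hat\omega_t}{\omega}$ using a maximum principle computation applied to a quantity of the form $\log \tr{\hat\omega_t}{\omega} - A \ti\vp + B e^{-C\ti\vp}$, absorbing the bad torsion terms coming from $d\hat\omega_t \neq 0$ into the good second order term, exactly as in the Hermitian Monge-Amp\`ere estimate of \cite{TW2} used in the proof of Theorem \ref{theorem:gill}; third, higher order regularity follows by the complex parabolic Evans-Krylov theorem and Schauder bootstrapping. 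Once these estimates are in hand, $\omega(t)$ extends smoothly up to $t=T'$ as a Hermitian metric, and short-time existence applied from $T'$ pushes the solution past $T'$; letting $T' \uparrow T$ gives the flow on $[0,T)$. Uniqueness on the whole interval follows from uniqueness of the short-time solution and a standard connectedness argument. The main obstacle is the second-order estimate: unlike the K\"ahler case, the torsion of $\hat\omega_t$ produces first-order terms in the evolution of $\tr{\hat\omega_t}{\omega}$ that do not vanish, and the positivity of $\hat\omega_t$ (rather than of $\alpha_t$) is what makes the $\ddbar$-trick of \cite{TW2} applicable here.
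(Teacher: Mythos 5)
Your proposal is correct in outline and follows essentially the same route as the paper's proof: the same interpolating reference metrics $\hat\omega_t$ built from $\omega_0$ and $\alpha_{T''}+\ddbar\psi$, the same reduction to a scalar parabolic complex Monge-Amp\`ere equation (the paper merely absorbs your zeroth-order term $-\psi/T''$ into the volume form $\Omega=\omega_0^n e^{f_{T_0}/T_0}$), the same ladder of a priori estimates, and the same extension-plus-uniqueness argument. Your choice for the second-order estimate is the one variant the paper explicitly flags as interchangeable: the paper notes it ``could apply the second order estimate of Gill \cite{G}'' (the Cherrier/\cite{TW2}-type argument you describe), but instead traces against the fixed metric $g_0$ and runs the Phong--Sturm quantity $\log\tr{g_0}{g}-A\vp+1/(\vp+\ti{C})$ through Proposition \ref{prop:bigcalc}, mainly because that trick is reused in Sections \ref{section:divisor} and \ref{section:monge}; either choice works, and tracing against the time-dependent $\hat\omega_t$ only adds a term controlled by the uniform bounds on the family over $[0,T'']$.

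One step, however, fails as literally written: the maximum principle applied directly to $\dot{\ti{\vp}}$ gives nothing, because $(\ddt{}-\Delta)\dot{\ti{\vp}}=\tr{\omega}{\chi}$ with $\chi=\ddt{}\hat\omega_t=\frac{1}{T''}\left(\alpha_{T''}+\ddbar\psi-\omega_0\right)$, which has no definite sign. Your quantity $t\dot{\ti{\vp}}-\ti{\vp}$ (corrected by $-nt$) does yield the upper bound, since its heat operator equals $\tr{\omega}{(t\chi-\hat\omega_t)}=-\tr{\omega}{\omega_0}\le 0$; but for the lower bound you need the Tian--Zhang-type combination $(T''-t)\dot{\ti{\vp}}+\ti{\vp}+nt$, whose heat operator is $\tr{\omega}{\hat\omega_{T''}}>0$ --- this is exactly the quantity $Q_0$ in Lemma \ref{lemma:Testimates}, and the resulting bound is uniform on $[0,T']$ because $T''-T'>0$ is fixed. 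With that correction, your outline is the paper's proof.
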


In the special case when $\omega_0$ is K\"ahler, this is already known by the result of Tian-Zhang \cite{TZ}, who extended earlier work of Cao and Tsuji \cite{Ca, Ts, Ts2}.  In the K\"ahler case, $T$ depends only on the cohomology class $[\omega_0]$ and can be written $$T= \sup \{ t \ge 0 \ | \ [\omega_0] - t c_1(M) >0 \}.$$
Furthermore the Nakai-Moishezon criterion, due to Buchdahl \cite{Bu2} and Lamari \cite{La2} for K\"ahler surfaces  and to Demailly-P\u{a}un \cite{DP} for general K\"ahler manifolds, implies that at time $T$ either the volume of $M$ goes to zero, or the volume of some proper analytic subvariety of $M$ goes to zero (cf. the discussion in \cite{FIK}).

Note that in the general Hermitian case, we can consider the equivalence relation of $(1,1)$ forms on $M$:
\begin{equation} \nonumber
\alpha \sim \alpha' \quad \Longleftrightarrow \quad \alpha = \alpha' + \ddbar \psi \quad \textrm{for some function } \psi \in C^{\infty}(M).
\end{equation}
Then $T$  defined by (\ref{T}) depends only on the equivalence class of $\omega_0$.

In the special case when $M$ is a complex surface ($n=2$) a result of Gauduchon \cite{Ga} is that every Hermitian metric is conformal to a $\partial \ov{\partial}$-closed metric  $\omega_0$.  If $\omega_0$ is $\partial\ov{\partial}$-closed then so is $\omega(t)$ for $t\in[0,T)$.  Moreover,
 we have a  geometric characterization of the maximal existence time $T$:

\begin{theorem} \label{theorem:surfaces}
Let $M$ be a compact complex surface, $\omega_0$ a $\partial \ov{\partial}$-closed Hermitian metric.  Then $T$ defined by (\ref{T}) can be written as
\[\begin{split}
T=\sup\bigg\{T_0 \geq 0\ \bigg|\ &\forall t\in[0,T_0],\ \int_M \alpha_t^2>0,\quad \int_D \alpha_t >0,\\
&\mathrm{for\ all\ } D \mathrm{\ irreducible\ effective\ divisors\ with\ }D^2<0\bigg\},
\end{split}\]
for $\alpha_t$ given by (\ref{alphat}).
\end{theorem}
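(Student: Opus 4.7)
Writing $T^{\star}$ for the supremum on the right-hand side of the displayed formula in the theorem, I prove $T \le T^{\star}$ and $T \ge T^{\star}$ separately.

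\textbf{Easy direction, $T \le T^{\star}$.} Fix $t<T$ and pick $\psi\in C^{\infty}(M)$ with $\ti\omega := \alpha_t+\ddbar\psi>0$. Since $\omega_0$ is $\ddb$-closed (Gauduchon) and $\Ric(\omega_0)$ is $d$-closed, $\alpha_t$ is $\ddb$-closed, and hence so is $\ti\omega$. Expanding
\[
\int_M \ti\omega^2 \;=\; \int_M \alpha_t^2 \;+\; 2\int_M \alpha_t\wedge\ddbar\psi \;+\; \int_M (\ddbar\psi)^2,
\]
the last two terms vanish after integrating by parts twice, using $\ddb\alpha_t=0$ for the cross term and $\ddb\ddbar\psi=0$ for the square. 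Hence $\int_M\alpha_t^2=\int_M\ti\omega^2>0$. For any irreducible effective divisor $D$, pulling $\ddbar\psi$ back to the normalization $\pi\colon\ti D\to D$ and applying Stokes on the smooth Riemann surface $\ti D$ gives $\int_D\ddbar\psi=0$, so $\int_D\alpha_t=\int_D\ti\omega>0$. The same argument at every $t'\in[0,t]$ (using the corresponding $\psi_{t'}$, which exists since $t'<T$) shows $T^{\star}\ge t$; sending $t\to T$ yields $T \le T^{\star}$.

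\textbf{Hard direction, $T \ge T^{\star}$.} Fix $T_0 < T^{\star}$; it suffices to produce $\psi\in C^{\infty}(M)$ with $\alpha_{T_0}+\ddbar\psi>0$, which places $T_0 \le T$. The plan is to invoke a Nakai--Moishezon criterion adapted to the Gauduchon/Aeppli setting on a compact complex surface: a smooth real $\ddb$-closed $(1,1)$-form $\alpha$ with $\int_M\alpha^2>0$ and $\int_D\alpha>0$ for every irreducible effective divisor $D$ with $D^2<0$ is $\ddbar$-cohomologous to a positive form. In the K\"ahler/$d$-closed case this is the Buchdahl--Lamari theorem \cite{Bu2,La2}, combined with a Hodge--index argument that reduces curve positivity from arbitrary curves to those with $D^2<0$. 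Adapting this to $\ddb$-closed $(1,1)$-forms uses Lamari's duality pairing between smooth $\ddb$-closed $(1,1)$-forms and positive $\ddb$-closed $(1,1)$-currents, combined with the Enriques--Kodaira classification of non-K\"ahler surfaces.

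\textbf{Main obstacle.} The crux is the Gauduchon Nakai--Moishezon criterion itself. Buchdahl--Lamari establish the $d$-closed version on K\"ahler surfaces via mass-function / Hahn--Banach arguments producing positive currents in a given de Rham class. Extending this to the $\ddb$-closed setting on possibly non-K\"ahler surfaces requires Bott--Chern/Aeppli duality and a more careful analysis of which positive currents can be paired with a given Aeppli class. The reduction from ``all curves'' to ``only curves of negative self-intersection'' is likewise delicate in Bott--Chern cohomology on non-K\"ahler surfaces, since the classical Hodge signature theorem does not apply directly and must be recovered intrinsically on the Aeppli side.
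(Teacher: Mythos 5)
Your easy direction ($T \le T^{\star}$) is correct and is essentially what the paper does implicitly. The hard direction, however, contains a genuine gap, and it is not the one you flag. The Nakai--Moishezon criterion you propose to invoke is \emph{false} as you state it: on a compact complex surface carrying no curves of negative self-intersection (a complex torus, or a Hopf surface, which has $b_2=0$), the form $\alpha = -\omega_G$, with $\omega_G$ any Gauduchon metric, satisfies $\int_M \alpha^2 > 0$ and satisfies the divisor condition vacuously, yet $\alpha + \ddbar\psi$ can never be positive: positivity would give $0 < \int_M(\alpha+\ddbar\psi)\wedge\omega_G = \int_M \alpha\wedge\omega_G = -\int_M\omega_G^2 < 0$, using $\partial\ov{\partial}\omega_G=0$ to kill the $\ddbar\psi$ term. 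The correct criterion --- which is exactly Buchdahl's theorem \cite{Bu2}, already proved in the literature in the Gauduchon/$\partial\ov{\partial}$-closed setting on arbitrary compact complex surfaces, not only in the K\"ahler $d$-closed case as you assert --- carries the additional hypothesis $\int_M \eta\wedge\omega_0 > 0$ against a Gauduchon metric, precisely to select the correct component of the cone $\{\int_M\eta^2>0\}$. So the ``main obstacle'' you describe (re-proving the criterion via Aeppli duality and the Enriques--Kodaira classification) is a non-issue, since the needed statement is a citable theorem; the real issue is the hypothesis you dropped.

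Because of that extra hypothesis, the theorem does not follow by applying the criterion at the single time $T_0$: one must also verify $\int_M \alpha_{T_0}\wedge\omega_0 \geq 0$, and nothing in your two conditions at time $T_0$ alone yields this. This is exactly where the quantifier $\forall\, t\in[0,T_0]$ in the statement is needed, and it is the actual content of the paper's proof: (i) Buchdahl's hypothesis can be relaxed to $\int_M\eta\wedge\omega_0\geq 0$ by applying \cite{Bu2} with the Gauduchon metric $\eta + t\omega_0$, $t \gg 1$, in place of $\omega_0$ (since then $\int_M\eta\wedge(\eta+t\omega_0)=\int_M\eta^2+t\int_M\eta\wedge\omega_0>0$), and the conclusion then upgrades the pairing inequality to a strict one; (ii) the function $t\mapsto \int_M\alpha_t\wedge\omega_0$ is affine in $t$ and positive at $t=0$, so if it had a first zero at some $t_*\leq T_0$, the relaxed criterion at $t_*$ (whose other two hypotheses hold by assumption) would force $\int_M\alpha_{t_*}\wedge\omega_0>0$, a contradiction; hence the pairing stays nonnegative on all of $[0,T_0]$ and Buchdahl's theorem applies at every $t\le T_0$, giving $T\geq T^{\star}$. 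Without this continuity argument, or some substitute for it, your hard direction does not close.
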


Note that for  $t\in [0,T)$, the quantity $\int_M \alpha_t^2 = \int_M \omega(t)^2$ is the volume of $M$ (with respect to $\omega(t)$) and $\int_D \alpha_t = \int_D \omega(t)$ is the volume of the curve $D$.   Thus we can restate Theorem \ref{theorem:surfaces} as:

\begin{corollary} \label{corollary:surfaces}
Let $M$ be a compact complex surface, $\omega_0$ a $\partial \ov{\partial}$-closed Hermitian metric. Then the Chern-Ricci flow (\ref{crf}) starting at $\omega_0$ exists until either the volume of $M$ goes to zero, or the volume of a curve of negative self-intersection goes to zero.
\end{corollary}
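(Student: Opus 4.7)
The plan is to derive the corollary directly from Theorem \ref{theorem:surfaces} by translating the cohomological positivity conditions defining $T$ into statements about volumes along the flow. The key input is that since $\omega_0$ is $\partial\overline{\partial}$-closed and $\Ric(\omega_0) = -\ddbar\log\det g_0$ is $d$-closed, the reference form $\alpha_t = \omega_0 - t\,\Ric(\omega_0)$ is $\ddbar$-closed for every $t$, and the solution of the flow has the shape $\omega(t) = \alpha_t + \ddbar \Theta(t)$ for some smooth function $\Theta(t)$ on $[0,T)$.

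The first step is to verify the two integral identities underlying the restatement:
\begin{equation*}
\int_M \omega(t)^2 = \int_M \alpha_t^2, \qquad \int_D \omega(t) = \int_D \alpha_t,
\end{equation*}
for any irreducible effective divisor $D$. For the first one, expand $(\alpha_t + \ddbar\Theta)^2$; the cross term $\int_M \alpha_t \wedge \ddbar\Theta$ vanishes by integration by parts using $\ddbar \alpha_t = 0$, and $\int_M (\ddbar\Theta)^2$ vanishes because it is $d$-exact (e.g.\ $(\ddbar\Theta)^2 = d(\ov{\partial}\Theta \wedge \ddbar \Theta)$). For the second, write $\ddbar\Theta = d(\ov{\partial}\Theta)$ and apply Stokes' theorem on a desingularization $\pi:\tilde{D}\to D$ of the curve, where $\pi^*(\ddbar\Theta)$ is an exact form on a compact Riemann surface without boundary.

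The second step is to invoke Theorem \ref{theorem:surfaces}, which says $T$ is the supremum of times $T_0$ such that $\int_M \alpha_t^2 > 0$ and $\int_D \alpha_t > 0$ for every irreducible effective divisor $D$ with $D^2 < 0$ and every $t\in[0,T_0]$. If $T<\infty$, then by definition of the supremum and continuity of the functions $t\mapsto \int_M \alpha_t^2$ and $t\mapsto \int_D \alpha_t$, at least one of these integrals must vanish at $t=T$; moreover, since each of them is strictly positive on $[0,T)$, the offending one decreases continuously to zero. Combining this with the identities from the first step yields the conclusion: either $\int_M \omega(t)^2 \to 0$ or $\int_D \omega(t) \to 0$ for some irreducible effective divisor $D$ with $D^2<0$.

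The only delicate point is the second integration by parts, namely justifying $\int_D \ddbar\Theta = 0$ on a possibly singular curve; this is handled either by passing to a resolution as above, or by excising neighborhoods of the finitely many singularities of $D$ and observing that the boundary terms vanish in the limit since $\Theta$ is smooth on $M$. Everything else is a direct transcription of Theorem \ref{theorem:surfaces} into geometric language.
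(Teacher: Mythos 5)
Your first two steps reproduce exactly the paper's own justification: Corollary \ref{corollary:surfaces} is presented in the paper as a direct restatement of Theorem \ref{theorem:surfaces}, resting only on the identities $\int_M \omega(t)^2 = \int_M \alpha_t^2$ and $\int_D \omega(t) = \int_D \alpha_t$, which hold because $\omega(t)-\alpha_t = \ddbar\Theta$ with $\alpha_t$ being $\partial\ov{\partial}$-closed. Your integration-by-parts argument for the first identity and your normalization/Stokes argument for the second are both correct (the paper asserts these identities without proof), so the core of your proposal coincides with the paper's.

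The place where you go beyond the paper is your final step, and there the justification has a genuine gap. You claim that if $T<\infty$ then ``by definition of the supremum and continuity'' at least one of the integrals $\int_M\alpha_t^2$, $\int_D\alpha_t$ must vanish at $t=T$. Continuity of each individual function is not enough: a compact complex surface may carry \emph{infinitely many} irreducible curves of negative self-intersection (for instance, the blow-up of $\mathbb{CP}^2$ at nine very general points has infinitely many $(-1)$-curves, and such surfaces are within the scope of the corollary), so a priori the supremum defining $T$ could be forced collectively by an infinite family of curves whose critical times $t_D$ (where $\int_D\alpha_t$ first hits zero) accumulate at $T$ from above, with no single integral vanishing at $t=T$. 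Ruling out this scenario is not a soft continuity statement; it is precisely what Buchdahl's Nakai--Moishezon criterion \cite{Bu2} provides. Indeed, this is how the paper argues when it needs the stronger assertion: in the proof of Theorem \ref{theorem:surfaces2}(c), assuming $T<\infty$ and $\int_M\alpha_T^2>0$, it verifies $\int_M \alpha_T\wedge(\alpha_T+\ve\omega_0)>0$ and then applies the main theorem of \cite{Bu2} to produce an irreducible effective divisor $D$ with $D^2<0$ and $\int_D\alpha_T=0$; equivalently, if every such integral were strictly positive at $t=T$, Buchdahl's theorem would yield $\alpha_T+\ddbar f>0$ for some $f$, hence $\alpha_{T+\ve}+\ddbar f>0$ for small $\ve$, contradicting Theorem \ref{theorem:maximal}. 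If you intend the corollary only in the sense the paper intends it --- as Theorem \ref{theorem:surfaces} rewritten in terms of volumes --- then your last step is unnecessary and your proof is complete; but the stronger claim that ``the offending integral decreases continuously to zero'' requires Buchdahl's theorem and does not follow from the supremum definition alone.
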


As we remarked above, the same result was known to hold for the K\"ahler-Ricci flow thanks to the Nakai-Moishezon criterion of \cite{Bu2, La2}.  Analogues of Theorems \ref{theorem:maximal}, \ref{theorem:surfaces} and Corollary \ref{corollary:surfaces} were conjectured by Streets-Tian \cite{StT3} for their pluriclosed flow (see Section \ref{section:preliminaries} below).

The K\"ahler-Ricci flow has a deep connection to the Minimal Model Program in algebraic geometry, as demonstrated by the work of Song-Tian and others \cite{EGZ, LT,  ST, ST2, ST3,  SW, SW2, SW3, SW4, SY, Ti, TZ, Ts, Z}.
In the case of algebraic surfaces, the minimal model program is relatively simple.  Indeed, a \emph{minimal surface} is defined to be a surface with no  $(-1)$-curves (smooth rational curves $C$ with $C^2 =-1$).  To find the minimal model, one can just apply a finite number of \emph{blow-downs}, which are algebraic operations contracting the $(-1)$-curves.    It was shown in \cite{SW2} that the K\"ahler-Ricci flow on an algebraic surface carries out these algebraic operations, contracting $(-1)$-curves in the sense of Gromov-Hausdorff, and smoothly outside of the curves.
Moreover, the same behavior occurs on a non-algebraic K\"ahler surface \cite{SW4}. In all dimensions, weak solutions to the K\"ahler-Ricci flow through singularites were constructed in \cite{ST3} and a number of conjectures were made about the metric behavior of the flow (see also \cite{SY}).

We return now to the case of a complex (non-K\"ahler) surface.
In this case one can also contract the $(-1)$ curves to arrive at a minimal surface.  We conjecture that the Chern-Ricci flow on a complex surface starting at a $\partial \ov{\partial}$-closed metric behaves in an analogous way to the K\"ahler-Ricci flow on a K\"ahler surface.   We prove the following:

\begin{theorem} \label{theorem:surfaces2} Let $M$ be a compact complex surface with a $\partial \ov{\partial}$-closed Hermitian metric $\omega_0$, and let $[0,T)$ be the maximal existence time of the Chern-Ricci flow starting from $\omega_0$. Then
\begin{enumerate}
\item[(a)] If $T=\infty$ then $M$ is minimal
\item[(b)] If $T<\infty$ and $\mathrm{Vol}(M,\omega(t))\to 0$ as $t\to T^-$, then $M$ is either birational to a ruled surface or it is a surface of class $VII$ (and in this case it cannot be an Inoue surface)
\item[(c)] If $T<\infty$ and $\mathrm{Vol}(M,\omega(t))$ stays positive as $t\to T^-$, then $M$ contains $(-1)$-curves.
\end{enumerate}
Furthermore, if $M$ is minimal then $T=\infty$ unless $M$ is $\mathbb{CP}^2$, a ruled surface, a Hopf surface or a surface of class VII with $b_2>0$, in which cases (b) holds.
\end{theorem}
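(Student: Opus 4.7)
The plan is to deduce all four statements from Theorem~\ref{theorem:surfaces}, interpreted as saying that along the flow the Bott--Chern class of $\omega(t)$ is represented by $\alpha_t=\omega_0-t\,\Ric(\omega_0)$, and that the flow breaks down precisely when this class fails to pair positively with $M$ itself or with some irreducible curve of negative self-intersection. Combined with the Enriques--Kodaira classification of compact complex surfaces, this reduces each of (a), (b), (c) to an intersection-theoretic calculation involving the polynomial $\int_M\alpha_t^2$ and the linear functions $t\mapsto \int_D\alpha_t$.

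Parts (a) and (c) fall out relatively cheaply once the \emph{Furthermore} clause is in hand. For (a), suppose for contradiction that $M$ contains a $(-1)$-curve $C$; by adjunction, $c_1^{\mathrm{BC}}(M)\cdot C=1$, so $\int_C\alpha_t=\int_C\omega_0-t$ becomes nonpositive at the finite time $t=\int_C\omega_0$, and Theorem~\ref{theorem:surfaces} then forces $T<\infty$, contradicting the hypothesis $T=\infty$. For (c), suppose $T<\infty$ with $\vol(M,\omega(t))$ bounded below, and assume for contradiction that $M$ is minimal; the \emph{Furthermore} clause would then place $M$ on the short list of excluded surfaces, for each of which (b) holds, contradicting the volume assumption. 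Hence $M$ is non-minimal, i.e.\ contains a $(-1)$-curve.

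The content is therefore concentrated in (b) and the \emph{Furthermore} clause, which I would prove together by going case-by-case through the Enriques--Kodaira classification of minimal surfaces. For minimal surfaces of non-negative Kodaira dimension, $K_M$ is nef, so one can represent $-c_1^{\mathrm{BC}}(M)$ modulo $\ddbar$ by a nonnegative closed $(1,1)$-form --- classical in the K\"ahler cases, and using the structure of the elliptic fibration together with $\ddbar$-closed representatives in the non-K\"ahler elliptic and Kodaira surface cases. This makes $\alpha_t$ pair positively with every negative curve and gives $\int_M\alpha_t^2>0$ for all $t\geq 0$, so Theorem~\ref{theorem:surfaces} yields $T=\infty$. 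Inoue surfaces, which have $b_2=0$ and contain no curves, must be handled separately: the curve condition in Theorem~\ref{theorem:surfaces} is vacuous, and one verifies that $\int_M\alpha_t^2>0$ for all $t$ using the explicit structure of $c_1^{\mathrm{BC}}$ on these surfaces, again yielding $T=\infty$. Conversely, on each of the excluded surfaces ($\mathbb{CP}^2$, minimal ruled, Hopf, and class VII with $b_2>0$) one must show directly that the polynomial $\int_M\alpha_t^2$ vanishes at a finite positive time and does so before any curve of negative self-intersection can collapse; for $\mathbb{CP}^2$ and ruled surfaces this is essentially the K\"ahler--Ricci flow calculation governed by the Fano index, while in the Hopf and class VII cases it requires structural information on $c_1^{\mathrm{BC}}$ coming from the specific geometry of these surfaces. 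Part (b) then follows: if the volume collapses at a finite $T$, the polynomial $\int_M\alpha_t^2$ must vanish at $T$, which forces Kodaira dimension $-\infty$, so $M$ is either birational to a ruled surface or has $b_1$ odd (hence class VII); Inoue surfaces are excluded by the positivity argument above.

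The main obstacle is the class VII analysis. Minimal class VII surfaces with $b_2>0$ are not yet classified (the Global Spherical Shell conjecture remains open), so one cannot exhibit them explicitly; instead one must extract just enough information about $H^{1,1}_{\mathrm{BC}}(M)$ and about the intersection numbers of the curves these surfaces are known to contain to force the global volume to collapse before any negative curve does. A secondary difficulty is the non-K\"ahler minimal properly elliptic and Kodaira surface cases, where K\"ahler Hodge theory is unavailable for producing a nonnegative representative of $-c_1^{\mathrm{BC}}(M)$; instead one works on the base of the elliptic fibration and pulls back, using the existence of Gauduchon metrics to absorb the $\ddbar$-correction.
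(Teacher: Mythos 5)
Your architecture inverts the paper's, and the inversion is precisely where the argument breaks down. You propose to prove the \emph{Furthermore} clause first, by a case-by-case analysis, and then deduce (c) from it; the paper does the opposite, because the one genuinely hard case of the \emph{Furthermore} clause --- minimal class VII surfaces with $b_2>0$, which are not classified --- is handled \emph{using} (c). The paper's proof of (c) rests on two ideas that never appear in your proposal: (i) an application of Buchdahl's Nakai--Moishezon criterion \cite{Bu2} to the limiting form $\alpha_T$, which shows that if $T<\infty$ and $\int_M\alpha_T^2>0$ then (since no $f$ can make $\alpha_T+\ddbar f$ positive, else the flow would continue) there must exist an irreducible curve $D$ with $D^2<0$ and, by continuity from $\int_D\alpha_t>0$ for $t<T$, $\int_D\alpha_T=0$; and (ii) the adjunction computation $2\pi K_M\cdot D=\frac{1}{T}\int_D(\alpha_T-\omega_0)=-\frac{1}{T}\int_D\omega_0<0$, which together with $D^2<0$ forces $D$ to be a smooth $(-1)$-curve. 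Once (c) is known, the class VII case is one line: $\int_M c_1^2=-b_2<0$ makes the volume polynomial vanish in finite time, so $T<\infty$, and minimality excludes case (c), so (b) holds. Your route instead requires showing directly, on these unclassified surfaces (which do carry curves of negative self-intersection, e.g.\ Enoki and Inoue--Hirzebruch surfaces), that for every Gauduchon $\omega_0$ the volume polynomial vanishes no later than every $\int_D\alpha_t$. You flag this as ``the main obstacle'' and offer no mechanism beyond extracting unspecified information about $H^{1,1}_{\mathrm{BC}}(M)$ and intersection numbers; but pure intersection theory cannot settle this comparison, since it depends on the choice of $\omega_0$, and the mechanism that actually works is exactly (i)+(ii) above, i.e.\ the proof of (c). So the difficulty has not been overcome, only relocated into the single step you concede you cannot do.

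There is a second, independent gap in your part (b). You assert that volume collapse at a finite time $T$ ``forces Kodaira dimension $-\infty$,'' but your justification (nef $K_M$, a semipositive representative of $-c_1^{\mathrm{BC}}(M)$) applies only to \emph{minimal} surfaces, while (b) concerns arbitrary surfaces. On a non-minimal surface with $\kappa(M)\geq 0$ --- e.g.\ the blow-up of a K3 surface at one point, where $\int_M c_1^2=-1$ --- the polynomial $\int_M\alpha_t^2$ genuinely vanishes at some finite positive time; what must be proved is that the flow's maximal time satisfies $\int_M\alpha_T^2>0$ in that situation (the $(-1)$-curves collapse first). The paper does this with an argument missing from your sketch: effectivity of $\ell K_M$ plus Poincar\'e--Lelong gives $\int_M\gamma\wedge c_1^{\mathrm{BC}}(M)<0$ for every Gauduchon metric $\gamma$, hence $\int_M\alpha_T\wedge c_1^{\mathrm{BC}}(M)\leq 0$ in the limit along the flow; then $0=\int_M\alpha_T^2\geq \int_M\alpha_T\wedge\omega_0\geq 0$ forces equalities, and Lemma~4 of \cite{Bu1} upgrades these to $\alpha_T=\ddbar f$, making $\omega_0$ K\"ahler and $M$ Fano, contradicting $\kappa(M)\geq 0$. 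Without this step or a substitute, your (b) covers only minimal surfaces. For the record, your part (a), the logical reduction of (c) to the \emph{Furthermore} clause, and your treatment of minimal K\"ahler surfaces with $\kappa\geq 0$, Kodaira surfaces, properly elliptic surfaces, Inoue, Hopf, $\mathbb{CP}^2$ and ruled surfaces are sound in outline and parallel to the paper's (the Inoue and Hopf cases need Teleman's results \cite{T2} on the sign of $\int_M\omega_0\wedge c_1^{\mathrm{BC}}(M)$, which is presumably the ``structural information'' you allude to).
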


In the case that $M$ is not minimal, and (c) occurs, we expect that the Chern-Ricci flow will contract a finite number of $(-1)$-curves and can be uniquely continued on the new manifold.
 Moreover, we conjecture that this process can be repeated until one obtains a minimal surface, or ends up in case (b) above.  More details of this conjecture can be found in Section \ref{section:surfaces}.

To provide some evidence for our conjecture, we prove the following theorem.  It is an analogue of a result for the K\"ahler-Ricci flow, whose proof is essentially contained in \cite{TZ} (for a recent exposition, see Chapter 7 of  \cite{SW4}), and
which was a key starting point for the work \cite{ST3, SW, SW2, SW3}.  We assume that the maximal existence time $T$ is finite and, roughly speaking, that the limiting `class' of the flow at time $T$  is given by the pull-back of a Hermitian metric  on a manifold $N$ via $\pi : M \rightarrow N$, where $\pi$ is a holomorphic map blowing down an exceptional divisor $E$ to a point $p \in N$.  We show that the solution to the Chern-Ricci flow will converge smoothly at time $T$ away from $E$.  In this way, one can obtain a Hermitian metric on the new manifold $N$, at least away from the point $p$.  Our result holds in any dimension:

\begin{theorem} \label{theorem:blowdown} Assume that there exists a holomorphic map between compact Hermitian manifolds $\pi : (M, \omega_0) \rightarrow (N, \omega_N)$ blowing down the exceptional divisor $E$ on $M$ to a point $p\in N$.  In addition, assume that there exists a smooth function $\psi$ on $M$ with
\begin{equation}
\omega_0 - T \emph{Ric}\, (\omega_0) + \ddbar \psi = \pi^* \omega_N,
\end{equation}
with $T<\infty$ given by (\ref{T}).

Then the solution $\omega(t)$ to the Chern-Ricci flow (\ref{crf}) starting at $\omega_0$ converges in $C^{\infty}$ on compact subsets of $M \setminus E$ to a smooth Hermitian metric $\omega_{T}$ on $M \setminus E$.
\end{theorem}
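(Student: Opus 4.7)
The plan is to reduce the Chern-Ricci flow to a parabolic complex Monge-Amp\`ere equation whose reference $(1,1)$-form degenerates at time $T$ along $E$, and then to establish uniform a priori estimates on compact subsets of $M\setminus E$. The overall strategy parallels Song-Weinkove's treatment of the K\"ahler-Ricci flow through a divisorial contraction; the new difficulty is that torsion terms appear throughout the Hermitian calculations.

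Set
\[
\hat{\omega}_t := \frac{T-t}{T}\omega_0 + \frac{t}{T}\pi^*\omega_N,
\]
so that $\alpha_t = \hat{\omega}_t - \frac{t}{T}\ddbar\psi$. Writing $\omega(t) = \hat{\omega}_t + \ddbar\vp$ with $\vp = \Theta(t) - \frac{t}{T}\psi$, and taking the smooth positive volume form $\Omega := e^{\psi/T}\omega_0^n$ (which satisfies $\ddbar\log\Omega = \frac{1}{T}(\pi^*\omega_N - \omega_0)$), the Chern-Ricci flow becomes the parabolic complex Monge-Amp\`ere equation
\[
\ddt\vp = \log\frac{(\hat{\omega}_t + \ddbar\vp)^n}{\Omega},\qquad \hat{\omega}_t + \ddbar\vp > 0,\qquad \vp(0) = 0.
\]
The forms $\hat{\omega}_t$ are positive on $M$ for $t \in [0,T)$, whereas $\hat{\omega}_T = \pi^*\omega_N$ is only semipositive, vanishing along $E$. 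A uniform $L^\infty$ bound on $\vp$ on $M\times[0,T)$ follows from the Tosatti-Weinkove $L^\infty$ estimate for the complex Monge-Amp\`ere equation on Hermitian manifolds (and the refinement alluded to in the abstract), using that the total mass $\int_M\omega(t)^n$ is uniformly bounded since $\int_M(\pi^*\omega_N)^n$ is finite. Two-sided bounds on $\dot\vp$ come from the maximum principle applied to Cao/Tian-Zhang-type test functions such as $t\dot\vp - \vp - nt$ and $(T-t)\dot\vp + \vp + At$.

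The central step, and main obstacle, is the interior $C^2$ estimate. Let $s$ be a holomorphic section of the line bundle $[E]$ cutting out $E$, and $h$ a smooth Hermitian metric on $[E]$, so that $|s|^2_h$ is a smooth nonnegative function vanishing precisely along $E$. I would apply the maximum principle to
\[
Q := \log \tr{\omega}{\pi^*\omega_N} - A\vp + \ve \log |s|^2_h,
\]
with $A$ large and $\ve > 0$ small. Since $\log|s|^2_h \to -\infty$ along $E$, the supremum of $Q$ on $M\times[0,T)$ is attained in $(M\setminus E)\times[0,T)$. The parabolic Chern-Lu inequality in the Hermitian setting (Gill \cite{G}, Tosatti-Weinkove \cite{TW2}) gives, since $\pi$ is holomorphic,
\[
\left(\ddt - \Delta\right)\log \tr{\omega}{\pi^*\omega_N} \leq C_1\,\tr{\omega}{\pi^*\omega_N} + (\text{first-order torsion corrections}).
\]
Combining with $\Delta\vp = n - \tr{\omega}{\hat{\omega}_t} \leq n - \frac{t}{T}\tr{\omega}{\pi^*\omega_N}$, the bound on $\dot\vp$, and the contribution of $\ve\log|s|^2_h$ (controlled by the Chern curvature of $h$), for $A$ sufficiently large the bad linear-in-$\tr{\omega}{\pi^*\omega_N}$ term is absorbed and one obtains a uniform upper bound on $\tr{\omega}{\pi^*\omega_N}$ at the maximum of $Q$. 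The torsion corrections, absent in the K\"ahler case, are the main technical difficulty; they can be controlled by adding further correction terms to $Q$ (for instance of the form $C'(1-e^{-\vp})$ or involving $|\partial\vp|^2_\omega$, as in the Hermitian Monge-Amp\`ere $C^2$ estimates of Tosatti-Weinkove \cite{TW2}). Combined with $\omega^n = e^{\dot\vp}\Omega$, this yields a two-sided bound $c_K\,\pi^*\omega_N \leq \omega(t) \leq C_K\,\pi^*\omega_N$ on each compact $K\subset M\setminus E$, uniformly in $t$.

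Higher-order bounds follow from local Evans-Krylov applied to the Monge-Amp\`ere equation and parabolic Schauder bootstrap, yielding uniform $C^\infty_{\mathrm{loc}}(M\setminus E)$ estimates on $\vp(t)$. For convergence as $t\to T^-$, the $L^\infty$ bound on $\dot\vp$ makes $\vp(t)$ Cauchy in $C^0(M)$, and the interior estimates promote this to $C^\infty_{\mathrm{loc}}(M\setminus E)$ convergence to a smooth limit $\vp_T$; the metric $\omega_T := \pi^*\omega_N + \ddbar\vp_T$ is then the sought-after smooth Hermitian limit on $M\setminus E$.
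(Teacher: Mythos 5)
Your reduction to the Monge-Amp\`ere flow matches the paper's, but both pillars of your estimate scheme fail as written, and they fail at exactly the points where the Hermitian setting differs from the K\"ahler one. First, there is no uniform lower bound on $\dot{\vp}$: repeating the computation of Lemma \ref{lemma:Testimates} with your test function gives $(\ddt{}-\Delta)\bigl((T-t)\dot{\vp}+\vp+nt\bigr)=\tr{\omega}{\hat{\omega}_T}=\tr{\omega}{\pi^*\omega_N}$, which is merely nonnegative because $\pi^*\omega_N$ degenerates along $E$; the minimum principle then yields only $\dot{\vp}\geq -C/(T-t)$, which is useless as $t\to T$. This is precisely why Proposition \ref{prop:nn} of the paper claims only the upper bound $\dot{\vp}\leq C$, and why the paper introduces a genuinely different ingredient, the parabolic Schwarz lemma for volume forms (Proposition \ref{prop:schwarz}): the identity $(\ddt{}-\Delta)\log\frac{(\pi^*\omega_N)^n}{\omega^n}=\tr{\omega}{\pi^*\Ric(\omega_N)}$ contains no torsion terms at all and supplies the substitute lower bound $\omega^n\geq C^{-1}(\pi^*\omega_N)^n$. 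Your argument leans on the missing lower bound for $\dot{\vp}$ in two essential places: to control the term $-A\dot{\vp}$ in the maximum-principle computation for your $Q$, and (via $\omega^n=e^{\dot{\vp}}\Omega$) to convert the trace bound into a two-sided metric bound on compact sets; so the gap propagates to your conclusion. Your $C^0$ bound is also unjustified as stated: since $\alpha_t$ is not closed, $\int_M\omega(t)^n$ is not a cohomological quantity, and the elliptic estimate of \cite{TW2} is not uniform over the degenerating references $\hat{\omega}_t$; the paper instead uses $\hat{\omega}_t^n\geq c_0(T-t)^n\Omega$ together with the barrier $n(T-t)(\log(T-t)-1)$.

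Second, and more seriously, your central step is a parabolic Chern-Lu inequality for $\tr{\omega}{\pi^*\omega_N}$ on a Hermitian manifold, which you assert with unspecified ``torsion corrections''; neither \cite{G} nor \cite{TW2} contains such an inequality, and it is exactly the obstacle the paper is structured to avoid. When the evolving non-K\"ahler metric sits in the denominator of the trace, computing $(\ddt{}-\Delta)\tr{\omega}{\pi^*\omega_N}$ produces the difference between the Chern-Ricci contraction $g^{\ov{j}i}R_{k\ov{\ell}i\ov{j}}$ (which the flow controls, via $\ddt{}g^{\ov{j}i}$) and the other contraction $g^{\ov{\ell}k}R_{k\ov{\ell}i\ov{j}}$ (which arises when $\Delta$ falls on $g^{\ov{j}i}$); their difference consists of derivatives of the torsion of the unknown metric $g$, second-order quantities that are not controlled by $\vp$, $|\partial\vp|^2_\omega$, or terms like $C'(1-e^{-\vp})$. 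The cited estimates of \cite{TW2} bound $\tr{\omega}{\omega'}$ with the \emph{fixed} metric below, not $\tr{\omega'}{\chi}$, so they do not supply what you need. The paper's route is different: it never bounds $\tr{\omega}{\pi^*\omega_N}$. It bounds $\tr{g_0}{g}$, where all commutations involve only the controlled curvature and torsion of the fixed metric, using Proposition \ref{prop:bigcalc}, Tsuji's barrier $\tilde{\vp}=\vp-\ve_0\log|s|^2_h$, and the Phong-Sturm term $1/(\tilde{\vp}+C_0)$, whose gradient cancellation at the maximum absorbs the remaining first-order torsion term; the lower bound on $\omega$ away from $E$ then comes from the volume-form Schwarz lemma rather than from any trace bound. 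To salvage your approach you would have to actually prove the Hermitian Chern-Lu inequality you invoke -- a substantial result in its own right -- or switch to the paper's pair of quantities.
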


There are some new obstacles to proving this in the non-K\"ahler case that we overcome using  a parabolic Schwarz Lemma for volume forms, and a second order estimate for the metric which uses a trick of Phong-Sturm \cite{PS}.

In the cases when the flow has a long time solution, it is natural to investigate its behavior at infinity.
If the manifold has vanishing first Bott-Chern class, we have already seen by Gill's result (Theorem \ref{theorem:gill})
 that the flow converges to a Chern-Ricci flat Hermitian metric.  We now suppose that the first Chern class $c_1(M)$ is \emph{negative} (note that $c_1^{\textrm{BC}}(M)<0$ implies    $c_1(M)<0$).
In this case,
the manifold $M$ is K\"ahler and a fundamental result of Aubin \cite{Au} and Yau \cite{Y} says that $M$ admits a unique
K\"ahler-Einstein metric $\omega_{\mathrm{KE}}$ with negative scalar curvature. Cao \cite{Ca} then proved that the  K\"ahler-Ricci flow (appropriately normalized) deforms any K\"ahler metric in $-c_1(M)$ to $\omega_{\mathrm{KE}}$.  The same is true for the normalized K\"ahler-Ricci flow starting at any K\"ahler metric \cite{TZ, Ts}.
Our next result shows that starting at  any \emph{Hermitian metric} on the manifold $M$, the (normalized) Chern-Ricci flow will converge to the K\"ahler-Einstein metric $\omega_{\mathrm{KE}}$.

\begin{theorem}\label{theorem:c1n}
Let $M$ be a compact complex manifold with $c_1(M)<0$ and let $\omega_0$ be
a Hermitian metric on $M$. Then the Chern-Ricci flow \eqref{crf}
has a long-time solution $\omega(t)$, and as $t$ goes to infinity the rescaled metrics $\omega(t)/{t}$
converge smoothly to the unique K\"ahler-Einstein metric $\omega_{\mathrm{KE}}$ on $M$.
\end{theorem}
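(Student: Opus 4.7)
The plan proceeds in two stages: first use Theorem \ref{theorem:maximal} to show $T=\infty$, then rescale to a normalized Chern-Ricci flow with the K\"ahler-Einstein metric as reference and carry out a parabolic complex Monge-Amp\`ere estimate scheme.

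Since $c_1(M)<0$ the manifold $M$ is K\"ahler (in fact projective), and Aubin-Yau provides the unique K\"ahler-Einstein metric $\omega_{\mathrm{KE}}$ with $\Ric(\omega_{\mathrm{KE}})=-\omega_{\mathrm{KE}}$. Both $\Ric(\omega_0)$ and $-\omega_{\mathrm{KE}}$ represent $c_1^{\mathrm{BC}}(M)=c_1(M)$, so the $\ddb$-lemma on $M$ furnishes $f_0\in C^{\infty}(M)$ with $\Ric(\omega_0)+\omega_{\mathrm{KE}}=\ddbar f_0$. Consequently
\[
\alpha_t+\ddbar(tf_0)=\omega_0+t\omega_{\mathrm{KE}}>0\quad\text{for every } t\geq 0,
\]
so $T=\infty$ by (\ref{T}) and a global smooth solution $\omega(t)$ exists.

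For convergence, set $s=\log(1+t)$ and $\ti\omega(s)=\omega(t)/(1+t)$. The scale invariance $\Ric(c\omega)=\Ric(\omega)$ shows $\ti\omega$ satisfies the normalized Chern-Ricci flow $\partial_s\ti\omega=-\Ric(\ti\omega)-\ti\omega$, whose stationary point is $\omega_{\mathrm{KE}}$, and the theorem is equivalent to smooth convergence $\ti\omega(s)\to\omega_{\mathrm{KE}}$. Taking reference
\[
\hat\omega_s=e^{-s}\omega_0+(1-e^{-s})\omega_{\mathrm{KE}},
\]
which is positive Hermitian for all $s\geq 0$ and converges exponentially to $\omega_{\mathrm{KE}}$, I write $\ti\omega=\hat\omega_s+\ddbar\psi$. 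After absorbing a time-dependent additive constant coming from the rescaling, $\psi$ satisfies the parabolic complex Monge-Amp\`ere equation
\[
\partial_s\psi=\log\frac{(\hat\omega_s+\ddbar\psi)^n}{\omega_{\mathrm{KE}}^n}-\psi,
\]
whose only bounded stationary solution is $\psi\equiv 0$ by uniqueness of $\omega_{\mathrm{KE}}$.

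The rest is a priori estimates uniform in $s$. Uniform $C^0$ bounds on $\psi$ and on $\partial_s\psi$ come from the maximum principle, in the spirit of Cao's argument for the K\"ahler-Ricci flow \cite{Ca} adapted to the exponentially decaying Hermitian reference. The crucial step is a uniform second-order bound $\tr{\omega_{\mathrm{KE}}}{\ti\omega}\leq C$; Evans--Krylov and Calabi-type arguments then provide higher-order control, and smooth convergence $\psi\to 0$ is deduced from the fact that the forcing $\hat\omega_s-\omega_{\mathrm{KE}}$ is $O(e^{-s})$ together with the damping supplied by the $-\psi$ term, followed by uniqueness of the stationary solution. I expect the main obstacle to be precisely this second-order estimate: Yau's classical Laplacian computation uses a K\"ahler background, whereas $\hat\omega_s$ carries torsion inherited from $\omega_0$. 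The saving feature is that this torsion decays like $e^{-s}$, so the error terms appearing in the Hermitian Laplacian estimates of \cite{TW2} (cf.\ also \cite{G}) are exponentially small and can be absorbed into the damping, giving the bound uniformly in $s$.
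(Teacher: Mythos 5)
Your proposal is correct and follows essentially the same route as the paper: long-time existence from Theorem \ref{theorem:maximal} by exhibiting a positive representative equivalent to $\alpha_t$, the same $\log(1+t)$ normalization, exponentially interpolating reference metrics, the damped parabolic complex Monge-Amp\`ere equation, and the same estimate scheme (maximum principle bounds on the potential and its time derivative, a Cherrier/Phong--Sturm type trace estimate handling the torsion, Gill's higher-order estimates, and exponential decay of $\partial_s\psi$ to pass to the limit). The only differences are cosmetic: you take the limiting reference to be $\omega_{\mathrm{KE}}$ (i.e.\ the special case $\Omega=\omega_{\mathrm{KE}}^n$ of the paper's construction) and trace against $\omega_{\mathrm{KE}}$, which forces you to invoke Aubin--Yau existence at the outset, whereas the paper works with an arbitrary volume form $\Omega$ satisfying $\Ric(\Omega)<0$ and traces against $g_0$, so that the flow argument itself reproves the existence of $\omega_{\mathrm{KE}}$ and only uniqueness is quoted at the end.
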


In particular we see that  the Chern-Ricci flow on these manifolds, after normalization, deforms any Hermitian metric to a K\"ahler one.

Next we illustrate the Chern-Ricci flow with an explicit example.
 For $\alpha=(\alpha_1, \ldots, \alpha_n) \in \mathbb{C}^n \setminus \{0 \}$ with $|\alpha_1| = \cdots = |\alpha_n| \neq 1$, we consider the
 Hopf manifold $M_{\alpha} = (\mathbb{C}^n \setminus \{0 \})/\sim$ where
 \begin{equation} \nonumber
 (z_1, \ldots, z_n) \sim  (\alpha_1 z_1, \ldots, \alpha_n z_n).
\end{equation}
This is a non-K\"ahler complex manifold of complex dimension $n$.  If $n=2$, it is an example of a class VII surface.  We can write down an exact solution to the Chern-Ricci flow on $M_{\alpha}$.
Consider the metric
$\omega_H = \frac{\delta_{ij}}{r^2} \mn dz_i \wedge d\ov{z}_j.$
Then we have:

\begin{proposition} \label{prop:hopf1}
The metrics  $\omega(t) := \omega_H - t \emph{Ric}\, (\omega_H)$ on $M_{\alpha}$ give a solution of the Chern-Ricci flow on the maximal existence interval $[0,1/n)$.  As $t\rightarrow T=1/n$, the limiting  nonnegative $(1,1)$ form $\omega_{T}$ is given by
\begin{equation} \nonumber
\omega_{T} =  \frac{\overline{z}_i z_j}{r^4} \sqrt{-1} dz_i\wedge d\ov{z}_j.
\end{equation}
\end{proposition}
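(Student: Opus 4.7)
My plan is to verify the explicit formula is a solution, and then show that its interval of positivity $[0,1/n)$ coincides with the maximal existence time $T$ of (\ref{T}); Theorem~\ref{theorem:maximal} then finishes the job.

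First I would observe that under the generator $z_i\mapsto\alpha_iz_i$ of the equivalence relation, $r^2$ scales by the common value $|\alpha|^2:=|\alpha_1|^2=\cdots=|\alpha_n|^2$ and each $dz_i\wedge d\ov{z}_i$ by the same factor, hence $\omega_H$, $\omega_T$, and $\omega(t):=\omega_H-t\,\Ric(\omega_H)$ descend to $M_\alpha$. Since $\det g_H=r^{-2n}$ and
$$\ddbar\log r^2=\mn\!\left(\frac{\delta_{ij}}{r^2}-\frac{\ov{z}_iz_j}{r^4}\right)dz_i\wedge d\ov{z}_j=\omega_H-\omega_T,$$
a direct calculation gives $\Ric(\omega_H)=n(\omega_H-\omega_T)$, so $\omega(t)=(1-nt)\omega_H+nt\,\omega_T$, which is positive definite for $t\in[0,1/n)$ and degenerates exactly to the stated $\omega_T$ at $t=1/n$.

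Next I would check that $\omega(t)$ solves the flow. Setting $u_i:=\ov{z}_i/r$, the Hermitian matrix of $\omega(t)$ is $r^{-2}\bigl((1-nt)\delta_{ij}+nt\,u_i\ov{u}_j\bigr)$; since $u_i\ov{u}_j$ is the rank-one projector onto the span of $u$, the eigenvalues are $1$ (once) and $1-nt$ ($n-1$ times), giving $\det g(t)=(1-nt)^{n-1}r^{-2n}$. The $t$-dependent factor is spatially constant, so $\Ric(\omega(t))=\Ric(\omega_H)$ and $\ddt\omega(t)=n(\omega_T-\omega_H)=-\Ric(\omega(t))$, confirming the flow equation.

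The main obstacle is to show that $T=1/n$ in (\ref{T}) with $\alpha_t=(1-nt)\omega_H+nt\,\omega_T$. The inequality $T\geq 1/n$ is immediate. For the reverse, suppose some $t_0\geq 1/n$ and $\psi\in C^\infty(M_\alpha)$ gave $\alpha_{t_0}+\ddbar\psi>0$. The Hermitian matrix $\ov{z}_iz_j/r^4$ of $\omega_T$ factors as $\ov{z}\,z^{\top}/r^4$, so it has complex rank one everywhere and its kernel at any point has complex dimension $n-1\geq 1$. At a maximum point $x_0$ of $\psi$ one has $\ddbar\psi(x_0)\leq 0$ as a Hermitian form, and evaluating on a nonzero $v\in\ker\omega_T(x_0)$ yields
$$0<(\alpha_{t_0}+\ddbar\psi)(v,\ov v)\leq\alpha_{t_0}(v,\ov v)=(1-nt_0)\,\omega_H(v,\ov v)\leq 0,$$
a contradiction. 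Hence $T=1/n$, and Theorem~\ref{theorem:maximal} identifies $\omega(t)$ as the maximal solution on $[0,1/n)$.
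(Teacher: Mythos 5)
Your proposal is correct, and its computational core is exactly the paper's: the rank-one structure of $\ov{z}_iz_j/r^2$ gives eigenvalues $1$ (once) and $1-nt$ ($n-1$ times) for $r^2g(t)$, hence $\det g(t)=(1-nt)^{n-1}r^{-2n}$, so that $\Ric(\omega(t))=\Ric(\omega_H)$ and the flow equation holds on $[0,1/n)$. Where you genuinely add something is the word ``maximal.'' The paper asserts without further comment that $[0,1/n)$ is the maximal existence interval; implicitly this follows from the uniqueness part of Theorem \ref{theorem:maximal} together with the degeneration $\det g(t)\to 0$ as $t\to 1/n$ (any continuation past $1/n$ would have to agree with the explicit solution before $1/n$ and hence be degenerate at $t=1/n$), and for $n=2$ it also follows from the volume-collapse discussion of Hopf surfaces in Section \ref{section:surfaces}. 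You instead verify directly that the quantity $T$ of (\ref{T}) equals $1/n$: if $\alpha_{t_0}+\ddbar\psi>0$ for some $t_0\geq 1/n$ and smooth $\psi$, then evaluating at a maximum point of $\psi$ (where $\ddbar\psi\leq 0$) on a nonzero vector $v$ in the $(n-1)$-dimensional kernel of $\omega_T$ gives $0<(\alpha_{t_0}+\ddbar\psi)(v,\ov{v})\leq(1-nt_0)\,\omega_H(v,\ov{v})\leq 0$, a contradiction. This buys a self-contained identification of the number $T$ itself, valid in all dimensions --- note that for $n>2$ the surface criterion of Theorem \ref{theorem:surfaces} is unavailable since $\omega_H$ is not $\partial\ov{\partial}$-closed --- whereas the paper's implicit route only shows that this particular solution cannot be smoothly continued. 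Your preliminary check that $\omega_H$, $\omega_T$ and $\omega(t)$ descend to the quotient $M_\alpha$ is likewise a detail the paper leaves to the reader.
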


In the case of
  the original Hopf surface, which has $\alpha = (2,2)$ and is an elliptic fiber bundle over $\mathbb{P}^1$ via the map $(z_1, z_2) \mapsto [z_1, z_2]$, the limiting form $\omega_T$ is positive definite along the fibers and zero in directions orthogonal to the fibers.

If we start with any metric $\omega_0$ which differs from $\omega_H$ by $\ddbar \psi$ for some function $\psi$, then we conjecture that the flow also converges as $t \rightarrow T$ to a smooth but degenerate $(1,1)$ form on $M_{\alpha}$ with properties similar to $\omega_T$.  To give evidence for this conjecture, we prove an estimate:

\begin{proposition} \label{prop:hopf2} Let $\omega_0 = \omega_H + \ddbar \psi$ be a Hermitian metric on $M_{\alpha}$, and let $\omega(t)$ be the solution of the Chern-Ricci flow (\ref{crf}) starting at $\omega_0$ on $M_{\alpha}$ for $t \in [0,1/n)$.
Then there exists a uniform constant $C$ such that
\begin{equation} \nonumber
 \omega(t) \le C \omega_H, \quad \emph{for } t \in [0,1/n).
\end{equation}
\end{proposition}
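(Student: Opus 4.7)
The plan is to reformulate the flow as a parabolic complex Monge--Amp\`ere equation and to bound $\mathrm{tr}_{\omega_H}\omega$ from above by a maximum principle argument. By Proposition~\ref{prop:hopf1}, the form $\hat\omega_t := (1-nt)\omega_H + nt\,\omega_T$ is the explicit solution of the Chern--Ricci flow starting at $\omega_H$; an elementary computation shows $\omega_T \leq \omega_H$, so $\hat\omega_t \leq \omega_H$ on $[0,1/n)$, and $\hat\omega_t^n = (1-nt)^{n-1}\omega_H^n$. Writing the solution with initial data $\omega_0 = \omega_H + \ddbar\psi$ as $\omega(t) = \hat\omega_t + \ddbar\varphi$ with $\varphi(0) = \psi$, the flow (\ref{crf}) is equivalent to
\begin{equation*}
\frac{\partial \varphi}{\partial t} = \log\frac{(\hat\omega_t + \ddbar\varphi)^n}{\omega_H^n}.
\end{equation*}

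First I would establish uniform $C^0$ estimates on $\varphi$. The upper bound $\varphi \leq \max_M \psi$ follows from the maximum principle applied to $\varphi$: at a maximum, $\ddbar\varphi \leq 0$ gives $\omega \leq \hat\omega_t \leq \omega_H$, and hence $\partial_t\varphi \leq 0$. A uniform lower bound $\varphi \geq \min_M \psi - C$ on $[0,1/n)$ is obtained by applying the maximum principle to $\varphi - (n-1)\int_0^t \log(1-ns)\,ds$, using $\hat\omega_t^n = (1-nt)^{n-1}\omega_H^n$ at the minimum and the finiteness of $\int_0^{1/n}\log(1-ns)\,ds$.

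Next I would carry out the second-order estimate by applying the maximum principle to $Q = \log \mathrm{tr}_{\omega_H}\omega - A\varphi$ for a large constant $A>0$. The Hermitian trace inequality for the Chern Laplacian of $\omega$ (cf.~\cite{TW2}) yields
\begin{equation*}
(\partial_t - \Delta_\omega)\log \mathrm{tr}_{\omega_H}\omega \leq C_0\,\mathrm{tr}_\omega \omega_H + C_0,
\end{equation*}
where $C_0$ depends only on the curvature and torsion of $\omega_H$, while differentiating $\varphi$ gives
\begin{equation*}
(\partial_t - \Delta_\omega)\varphi = \log\frac{\omega^n}{\omega_H^n} - n + \mathrm{tr}_\omega \hat\omega_t,
\end{equation*}
with $\mathrm{tr}_\omega \hat\omega_t \geq (1-nt)\,\mathrm{tr}_\omega \omega_H$. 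Combining these at a maximum of $Q$ with the $C^0$ bound on $\varphi$ is designed to yield $\mathrm{tr}_{\omega_H}\omega \leq C$ at the maximum and hence everywhere, which implies $\omega \leq C\omega_H$ since the eigenvalues of $\omega_H^{-1}\omega$ are positive with sum $\mathrm{tr}_{\omega_H}\omega$.

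The main obstacle is the degeneration of $\hat\omega_t$ as $t\to 1/n$: the factor $(1-nt)$ in the lower bound $\mathrm{tr}_\omega \hat\omega_t \geq (1-nt)\,\mathrm{tr}_\omega \omega_H$ vanishes at the endpoint, so no fixed choice of $A$ makes $A(1-nt) - C_0$ uniformly positive. To close the argument, I would adapt the Phong--Sturm trick referenced in the paper in connection with Theorem~\ref{theorem:blowdown}, introducing to $Q$ an additional barrier term that exploits the special structure of the Hopf manifold --- specifically the identity $\omega_T = \omega_H - \ddbar \log r^2$ (a well-defined $(1,1)$-form on $M_\alpha$ despite $\log r^2$ being multivalued) together with $\mathrm{tr}_{\omega_H}\omega_T = 1$. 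The extra contribution $nt\,\mathrm{tr}_\omega\omega_T$ in $\mathrm{tr}_\omega \hat\omega_t$ should then furnish the positivity needed to absorb the $C_0\,\mathrm{tr}_\omega\omega_H$ term uniformly on $[0,1/n)$, yielding $\mathrm{tr}_{\omega_H}\omega \leq C$ and hence $\omega \leq C\omega_H$.
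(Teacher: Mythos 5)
Your reduction to the parabolic Monge--Amp\`ere equation and your $C^0$ estimates are fine, and you have correctly located the real difficulty: since $\hat\omega_t=(1-nt)\omega_H+nt\,\omega_T$ degenerates as $t\to 1/n$, the usual absorption $A\,\tr{\omega}{\hat\omega_t}\ge (C_0+1)\tr{\omega}{\omega_H}$ fails for every fixed $A$. But your proposed repair does not work. The form $\omega_T=\frac{\ov{z}_i z_j}{r^4}\mn dz_i\wedge d\ov{z}_j$ has rank one: at each point it vanishes on an $(n-1)$-dimensional space of directions. Hence $\tr{\omega}{\omega_T}=g^{\ov{j}i}\ov{z}_iz_j/r^4$ only measures the inverse metric in the single radial direction, and there is no inequality $\tr{\omega}{\omega_T}\ge c\,\tr{\omega}{\omega_H}$ with $c>0$: if $g^{-1}$ becomes large in a direction annihilated by $\omega_T$ --- which is exactly the scenario the estimate must exclude --- then $\tr{\omega}{\omega_H}$ blows up while $nt\,\tr{\omega}{\omega_T}$ stays bounded. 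So the extra contribution $nt\,\tr{\omega}{\omega_T}$ cannot absorb $C_0\,\tr{\omega}{\omega_H}$, and no barrier built from the potential restores this, because here the degeneracy of $\hat\omega_T$ is of corank $n-1$ at \emph{every} point rather than concentrated along a divisor; the Tsuji/Phong--Sturm device of Lemma \ref{away} requires a uniform positivity of the form $\hat\omega_t-\ve_0 R_h\ge c_0\omega_0$ twisted by a line-bundle curvature, and that has no analogue in this collapsing situation. A secondary issue: the inequality $(\ddt{}-\Delta)\log\tr{\omega_H}{\omega}\le C_0\tr{\omega}{\omega_H}+C_0$ you quote suppresses the gradient/torsion terms of Proposition \ref{prop:bigcalc} ($\omega_H$ is not K\"ahler), and the known ways of disposing of them reintroduce potential terms and hence the same absorption problem.

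The missing idea --- and what the paper actually does --- is to abandon both the generic trace inequality and the quantity $\log\tr{\omega_H}{\omega}-A\varphi$, and instead compute $(\ddt{}-\Delta)\tr{\omega_H}{\omega}$ \emph{exactly}, using the explicit formulas $g_H^{\ov{\ell}k}=r^2\delta_{k\ell}$ and $\de_i\de_{\ov{j}}g_H^{\ov{\ell}k}=\delta_{ij}\delta_{k\ell}$. The latter identity produces the term $-(\tr{\omega_H}{\omega})(\tr{\omega}{\omega_H})$, which is \emph{quadratic}: it dominates $C\,\tr{\omega}{\omega_H}$ precisely when $\tr{\omega_H}{\omega}$ is large, supplying the uniform positivity that your linear term $A\,\tr{\omega}{\hat\omega_t}$ could not. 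The remaining terms, including the mixed first-derivative terms, are computed explicitly from $\hat g_{i\ov{j}}$ and controlled by a Cauchy--Schwarz argument with respect to a suitable inner product on tensors, and everything combines into
\begin{equation*}
\left(\ddt{}-\Delta\right)\tr{\omega_H}{\omega}\ \le\ \left(\frac{2}{n}-\frac{1}{n}\tr{\omega_H}{\omega}\right)\tr{\omega}{\Ric(\omega_H)},
\end{equation*}
whose right-hand side is nonpositive wherever $\tr{\omega_H}{\omega}\ge 2$, since $\Ric(\omega_H)\ge 0$. The maximum principle applied directly to $\tr{\omega_H}{\omega}$ then gives the uniform bound, with no potential term and in fact without any use of the $C^0$ estimates from your first step.
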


In particular, this result shows that we obtain convergence for the flow at the level of potential functions in $C^{1+\beta}$ for any $\beta \in (0,1)$.  For more details see Section \ref{section:hopf}.

The final result in this paper concerns not the Chern-Ricci flow, but an elliptic equation:  the complex Monge-Amp\`ere equation
\begin{equation}\label{maell}
(\omega + \ddbar \varphi)^n = e^F \omega^n, \quad \omega':=\omega+ \ddbar \varphi>0,
\end{equation}
on a compact Hermitian manifold $(M, \omega)$, where  $F$ is a smooth function on $M$.  We give an alternative proof of a result of \cite{TW2} that $\| \varphi \|_{C^0}$ is uniformly bounded (see also \cite{Bl, DK}).  The result makes use of a new second order estimate in this context: $\tr{\omega}{\omega'} \le Ce^{A (\varphi- \inf_M \varphi)}$ which we conjectured to hold in \cite{TW1}.  For more details see Section \ref{section:monge}.  We have included this result here because it follows easily from the argument used in Theorem \ref{theorem:blowdown} together with the method of \cite{TW1}.  The key new ingredient is the trick of Phong-Sturm \cite{PS} applied to this setting.\\

\section{Preliminaries and comparison with other flows} \label{section:preliminaries}

In this section, we include for the reader's convenience some background material on local coordinate computations with Hermitian metrics.

Let $(M, g)$ be a compact Hermitian manifold of complex dimension $n$.  We will often compute in complex coordinates $z_1, \ldots, z_n$.  In this case, $g$ is determined by the $n \times n$ Hermitian matrix $g_{i \ov{j}} = g( \partial_i, \partial_{\ov{j}})$, where we are writing $\partial_i, \partial_{\ov{j}}$ for $\frac{\partial}{\partial z_i}, \frac{\de}{\de \ov{z}_j}$ respectively.  We denote by $g^{\ov{j}i}$ the entries of the inverse matrix of $(g_{i\ov{j}})$.

We define the \emph{Chern connection} $\nabla$ associated to $g$ as follows.  Given a vector field $X= X^i \partial_i$ and a $(1,0)$ form $a=a_i dz_i$, we define $\nabla X$ and $\nabla a$ to be the tensors with components:
\begin{equation} \nonumber
\nabla_i X^k = \partial_i X^k + \Gamma^k_{ij} X^j, \quad \nabla_i a_j = \partial_i a_j - \Gamma_{ij}^k a_k,
\end{equation}
where the Christoffel symbols $\Gamma^k_{ij}$ are given by
\begin{equation} \nonumber
\Gamma^k_{ij} = g^{\ov{q} k} \partial_i g_{j \ov{q}}.
\end{equation}
The tensors $\nabla \ov{X}$ and $\nabla \ov{a}$ have components
$\nabla_{i} \ov{X^{k}} = \partial_i \ov{X^k}$ and  $\nabla_i \ov{a_j} = \partial_i \ov{a_j}$.  The connection $\nabla$ can be naturally extended to any kind of tensor, and we have $\nabla_k g_{i \ov{j}}=0$.

We write $\Delta$ for the \emph{complex Laplacian} of $g$, which acts on a function $f$ by
\begin{equation} \nonumber
\Delta f = g^{\ov{j}i} \partial_i \partial_{\ov{j}} f = g^{\ov{j}i} \nabla_i \nabla_{\ov{j}} f.
\end{equation}
The \emph{torsion} of $g$ is the tensor $T$ with components
\begin{equation} \nonumber
T^k_{ij} = \Gamma^k_{ij} - \Gamma^k_{ji}.
\end{equation}
The torsion tensor vanishes in the special case that $g$ is K\"ahler.

We define the \emph{curvature} of $g$ to be the tensor with components
\begin{equation} \nonumber
R_{k \ov{\ell} i}^{\ \ \ \, p} = - \partial_{\ov{\ell}} \Gamma^p_{ki}.
\end{equation}
We will often raise and lower indices using the metric $g$, writing for example $R_{k \ov{\ell} i \ov{j}} = g_{p \ov{j}} R_{k \ov{\ell} i}^{\ \ \ \, p}$.  Note that $\ov{R_{k \ov{\ell} i \ov{j}}} = R_{\ell \ov{k} j \ov{i}}$.  We have the following commutation formulae:
\begin{align} \nonumber
[\nabla_k, \nabla_{\ov{\ell}}] X^i & = R_{k \ov{\ell} j}^{\ \ \ \,  i} X^j, \quad [\nabla_k, \nabla_{\ov{\ell}}] \ov{X^i} = - R_{k \ov{\ell} \ \, \ov{j}}^{\ \ \, \, \ov{i}} \, \ov{X^j}, \\ \nonumber
[\nabla_k, \nabla_{\ov{\ell}}] a_j & = - R_{k \ov{\ell} j}^{\ \ \ \, i} a_i, \quad [\nabla_k, \nabla_{\ov{\ell}}] \ov{a_j} = R_{k \ov{\ell} \ \, \ov{j}}^{\ \ \, \, \ov{i}} \, \ov{a_i},
\end{align}
where we are writing $[\nabla_k, \nabla_{\ov{\ell}}]$ for $\nabla_k \nabla_{\ov{\ell}} - \nabla_{\ov{\ell}} \nabla_k$.  We write the \emph{Chern-Ricci curvature} of $g$ as the tensor $\RC_{k \ov{\ell}}$ given by
\begin{equation} \nonumber
\RC_{k \ov{\ell}} = g^{\ov{j} i} R_{k \ov{\ell} i \ov{j}} = - \partial_k \partial_{\ov{\ell}} \log \det g,
\end{equation}
so that the Chern-Ricci form is equal to
$$\Ric(\omega)=\mn \RC_{k \ov{\ell}} dz_k\wedge d\ov{z}_\ell.$$
It is a closed real $(1,1)$ form and its cohomology class in the Bott-Chern cohomology group
$$H^{1,1}_{\mathrm{BC}}(M, \mathbb{R})=\frac{\{\textrm{closed real }(1,1)\textrm{ forms}\}}{\{\ddbar \psi, \psi\in C^\infty(M,\mathbb{R})\}},$$
is the first Bott-Chern class of $M$, and is denoted by $c_1^{\mathrm{BC}}(M)$. It  is independent of the choice of Hermitian metric $\omega$. More generally, if $\Omega$ is a smooth positive volume form on $M$ we can define locally
$\Ric(\Omega)=-\ddbar\log\Omega$, which is a global closed real $(1,1)$ form that represents $c_1^{\mathrm{BC}}(M)$.  For notational convenience, we omit the factor of $2\pi$ that usually appears in the definition of $c_1^{\mathrm{BC}}(M)$.  The downside of this convention is that some factors of $2\pi$ will appear later in the cohomological calculations of Section \ref{section:surfaces}.

We end this section by briefly mentioning some related parabolic equations on Hermitian manifolds which have previously been studied in the literature.
Streets-Tian \cite{StT2} introduced the flow
\begin{equation} \label{HCF}
\ddt{} g_{i \ov{j}} = - S_{i\ov{j}} +Q_{i \ov{j}},\quad g|_{t=0} = g_0,
\end{equation}
where $S_{i\ov{j}}$ is given by taking `the other trace' of the curvature of the Chern connection:
$$S_{i \ov{j}} = g^{\ov{\ell} k} R_{k \ov{\ell} i \ov{j}},$$
and $Q_{i \ov{j}}$ is a certain quadratic term in the torsion.  If the form $\omega_0$ associated to $g_0$ satisfies $\partial \overline{\partial} \omega_0=0$, this equation becomes their \emph{pluriclosed flow}
\begin{equation} \label{pluriclosed}
\ddt{} \omega = \partial \partial^* \omega + \ov{\partial} \, \ov{\partial}^*  \omega- \Ric (\omega), \quad \omega|_{t=0} = \omega_0,
\end{equation}
and if $g_0$ is K\"ahler, it coincides with the K\"ahler-Ricci flow.  They analyzed (\ref{pluriclosed}) in detail in \cite{StT, StT3} and made a number of conjectures about it, two of which are analogues of our Theorems \ref{theorem:maximal} and \ref{theorem:surfaces}.  They conjecture that their flow can be used to study the topology of class VII$^+$ surfaces.  In addition, Streets-Tian considered a family of flows of the form (\ref{HCF}) with arbitrary quadratic torsion term $Q$ and proved, among other results, a short-time existence theorem \cite{StT2}. The flow (\ref{HCF}) was extended to the almost complex setting by Vezzoni \cite{V}.  Liu-Yang \cite{LY} propose studying  the flow (\ref{HCF}) in the  case of $Q=0$.

In \cite{G}, Gill introduced the following parabolic complex Monge-Amp\`ere equation on a compact Hermitian manifold $(M, \hat{g})$:
\begin{equation} \label{gill}
\ddt{\varphi} = \log \frac{\det (\hat{g}_{i \ov{j}} + \partial_i \partial_{\ov{j}} \varphi)}{\det (\hat{g}_{i \ov{j}})} -F, \quad \hat{g}_{i \ov{j}} + \partial_i \partial_{\ov{j}} \varphi>0, \quad \varphi|_{t=0} = 0,
\end{equation}
for a fixed smooth function $F$ on $M$.  He showed that the unique solution to (\ref{gill}) exists for all time and, after an appropriate normalization, converges in $C^{\infty}$ to a smooth function $\varphi_{\infty}$ solving the complex Monge-Amp\`ere equation
\begin{equation} \label{cma}
\log \frac{\det (\hat{g}_{i \ov{j}} + \partial_i \partial_{\ov{j}} \varphi_{\infty})}{\det (\hat{g}_{i \ov{j}})} = F+b,
\end{equation}
for a constant $b$ which is uniquely determined.  The existence of solutions to the elliptic equation (\ref{cma}) on Hermitian manifolds (generalizing Yau's Theorem \cite{Y}) was already known by the work of Cherrier \cite{Ch} (if $n=2$) and the authors \cite{TW2} ($n>2$).   See also \cite{GL, ZX}.  In the special case where $\hat{g}$ is K\"ahler, the flow (\ref{gill}) had been considered earlier by Cao \cite{Ca}, who proved the analogous results.

In the case when $c_1^{\textrm{BC}}(M)=0$, we can find a function $F$ satisfying $$ \partial \ov{\partial} \log \det \hat{g} = \partial \ov{\partial} F,$$ and with this choice, $\omega(t)=\hat{\omega} + \sqrt{-1}\partial \ov{\partial} \varphi(t)$ for $\varphi(t)$ solving (\ref{gill}) is exactly the Chern-Ricci flow starting at $\hat{\omega}$.
 In general, the only difference between the Chern-Ricci flow and Gill's flow (\ref{gill}) is that for the Chern-Ricci flow we replace the fixed metric $\hat{g}$ by a smoothly varying family of Hermitian metrics $\hat{g}_t$, and replace $F$ by a particular function, which may also depend on $t$.   Many of Gill's estimates carry over easily to the case of the Chern-Ricci flow and we will make extensive use of them here.

A final remark about notation. In the following, $C,C'$ will denote uniform positive constants which may vary from line to line.

\section{Evolution of the trace of the metric}

In this section we write down a formula for the evolution of the trace of the evolving metric with respect to a fixed Hermitian metric.  We will need this calculation in later sections.  We carry out the computation here using tensorial quantities, following \cite{Ch}, rather than using a particular choice of complex coordinates as in \cite{G, GL, StT2, TW1}, for example.

We suppose that we have three Hermitian metrics
$g,g_0$ and $\hat{g}$ such that $g=g(t)$  satisfies the Chern-Ricci flow (\ref{crf}),
and such that the corresponding forms satisfy
\begin{equation} \label{plusclosed}
\omega=\omega_0+ \eta(t),
\end{equation}
for a closed $(1,1)$ form $\eta(t)$.

We denote by $\hat{\nabla}$, $\hat{T}$, $\hat{\Gamma}$, $\hat{R}$ the Chern connection, torsion, Christoffel symbols and curvature of $\hat{g}$. Denote by $T_0$ the torsion tensor of $g_0$ and by $\Delta$ the complex Laplacian associated to $g=g(t)$.  

Note that for the purposes of this paper we will in fact only need the case of $\hat{g}=g_0$.  However, we included the more general calculation below since we anticipate that it may be useful in the future.

We have:

\begin{proposition} \label{prop:bigcalc} The evolution of $\log \emph{tr}_{\hat{g}}g$ is given by
\begin{eqnarray}\label{long}
\left(\ddt{}-\Delta\right) \log\tr{\hat{g}}g&=& (I) + (II) + (III)
\end{eqnarray}
where
\begin{align*} \nonumber
 (I) & = \frac{1}{\tr{\hat{g}}g} \bigg[-g^{\ov{j}p}g^{\ov{q}i}\hat{g}^{\ov{\ell}k}\hat{\nabla}_k g_{i\ov{j}}\hat{\nabla}_{\ov{\ell}}g_{p\ov{q}} +\frac{1}{\tr{\hat{g}}g} g^{\ov{\ell}k}\hat{\nabla}_k \tr{\hat{g}}{g}\hat{\nabla}_{\ov{\ell}}\tr{\hat{g}}{g} \\ & \mbox{}\ \ \ \
- 2\mathrm{Re}\left(g^{\ov{j}i}\hat{g}^{\ov{\ell}k} \hat{T}^p_{ki}\hat{\nabla}_{\ov{\ell}}g_{p\ov{j}}\right) -g^{\ov{j}i}\hat{g}^{\ov{\ell}k}\hat{T}^p_{ik}\ov{\hat{T}^q_{j\ell}}g_{p\ov{q}}\bigg] \\
(II) & = \frac{1}{\tr{\hat{g}}g} \bigg[ g^{\ov{j}i}\hat{g}^{\ov{\ell}k}(\hat{\nabla}_i\ov{\hat{T}^q_{j\ell}}-\hat{R}_{i\ov{\ell}p\ov{j}}\hat{g}^{\ov{q}p})g_{k\ov{q}}
\bigg] \\
(III) & = - \frac{1}{\tr{\hat{g}}g} \bigg[g^{\ov{j}i}\hat{g}^{\ov{\ell}k}\left(\hat{\nabla}_i  \left(  \ov{(T_0)^p_{j\ell}} (g_0)_{k \ov{p}} \right) +
 \hat{\nabla}_{\ov{\ell}} \left( (T_0)^p_{ik} (g_0)_{p \ov{j}} \right) \right)\\
 &\ \ \  -g^{\ov{j}i}\hat{g}^{\ov{\ell}k}\ov{\hat{T}^q_{j\ell}} (T_0)^p_{ik} (g_0)_{p \ov{q}}
\bigg].
\end{align*}
Moreover we have
\begin{align} \nonumber
(I) & \leq  \frac{2}{(\emph{tr}_{\hat{g}}g)^2}\mathrm{Re}\left(
\hat{g}^{\ov{\ell}i}g^{\ov{q}k} (T_0)^p_{ki}(g_0)_{p\ov{\ell}} \hat{\nabla}_{\ov{q}} \emph{tr}_{\hat{g}}g\right), \\ \nonumber
(II) & \leq C \emph{tr}_g \hat{g},
\end{align}
for a constant $C$ that depends only on $\hat{g}$.
If we are at a point where $\emph{tr}_{\hat{g}} g \geq 1$, then
\begin{equation} \nonumber
(III)  \leq C' \emph{tr}_g \hat{g},
\end{equation}
for $C'$ depending only on $g_0$ and $\hat{g}$.
\end{proposition}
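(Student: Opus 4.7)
The plan is to derive the formula by a direct tensorial bookkeeping with the Chern connection $\hat\nabla$ of the reference metric $\hat g$. For the time derivative, $\partial_t \mathrm{tr}_{\hat g} g = \hat g^{\bar j i}\partial_t g_{i\bar j} = \hat g^{\bar j i}\partial_i\partial_{\bar j}\log\det g$ from the Chern--Ricci flow. For the Laplacian, the identity $\Delta f = g^{\bar\ell k}\partial_k\partial_{\bar\ell} f$ on scalars together with $\hat\nabla\hat g = 0$ yields $\Delta \mathrm{tr}_{\hat g} g = g^{\bar\ell k}\hat g^{\bar j i}\hat\nabla_k\hat\nabla_{\bar\ell}g_{i\bar j}$. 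Commuting $\hat\nabla_k$ and $\hat\nabla_{\bar\ell}$ via the formulas of Section \ref{section:preliminaries} extracts the $\hat R$-curvature contributions that assemble into $(II)$; then rewriting $\hat\nabla_k g_{i\bar j} = (\Gamma(g) - \hat\Gamma)^p_{ki}g_{p\bar j}$ using $\nabla g = 0$ for the Chern connection of $g$ converts second covariant derivatives into $\hat\nabla g$-gradient-squared and $\hat T$-torsion pieces, which together with the chain-rule contribution $|\partial(\mathrm{tr}_{\hat g} g)|^2_g/(\mathrm{tr}_{\hat g} g)^2$ from $-\Delta\log f$ make up $(I)$.

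A key input throughout is the hypothesis $\omega = \omega_0 + \eta(t)$ with $\eta$ closed, which gives $\partial\omega = \partial\omega_0$ and hence the tensorial identity $g_{p\bar k}T^p_{ij}(g) = (g_0)_{p\bar k}(T_0)^p_{ij}$: the lowered torsion of the evolving metric agrees with that of $g_0$. Applying this identity wherever the torsion of $g$ appears in the computation produces the $T_0$, $g_0$, and $\hat\nabla g_0$ terms that group into $(III)$, eliminating any explicit appearance of the torsion of the evolving metric in the final formula.

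The estimates on $(II)$ and $(III)$ are routine. Diagonalizing $\hat g$ at the point so that $\hat g^{\bar j i} = \delta_{ij}$ and $g$ has eigenvalues $\lambda_i > 0$, factors of the form $g^{\bar j i}\hat g^{\bar\ell k}g_{k\bar q}/\mathrm{tr}_{\hat g} g$ reduce to $(\mathrm{tr}_g\hat g)(\mathrm{tr}_{\hat g} g)/\mathrm{tr}_{\hat g} g = \mathrm{tr}_g\hat g$; combined with the pointwise-bounded geometric data ($\hat R$, $\hat\nabla\hat T$ in $(II)$; $T_0$, $g_0$, $\hat\nabla g_0$ in $(III)$) this yields the stated bounds. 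The constraint $\mathrm{tr}_{\hat g} g \ge 1$ in $(III)$ is needed only for those residual subterms that carry an unpaired $1/\mathrm{tr}_{\hat g} g$ with no compensating $g_{k\bar q}$.

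The main obstacle is the bound on $(I)$. I would apply an Aubin--Yau-style Cauchy--Schwarz to the first two pieces, using the symmetrization identity $\hat\nabla_k g_{i\bar j} - \hat\nabla_i g_{k\bar j} = (T_0)^p_{ki}(g_0)_{p\bar j} - \hat T^p_{ki}g_{p\bar j}$ (itself a consequence of $d\omega = d\omega_0$) to exchange the first two indices of $\hat\nabla g$ modulo a torsion error; the Cauchy--Schwarz then controls the first two pieces of $(I)$ up to quadratic errors in $\hat T$ and $T_0$. Combining these errors with the explicit $\hat T$-terms already present in $(I)$ (the mixed term and the $|\hat T|^2$-quadratic), one completes the square so that all $\hat T$- and $\hat\nabla g$-quadratic contributions assemble into a manifestly nonpositive Hermitian form that may be discarded. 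What remains is precisely the asserted residue involving $T_0$, $g_0$, and $\hat\nabla\mathrm{tr}_{\hat g} g$. This completion of the square is the Phong--Sturm trick referenced in the introduction; choosing the right quadratic to complete in the simultaneous presence of $\hat T$ and $T_0$ is the delicate step that makes this the heart of the argument.
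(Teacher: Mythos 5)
Your proposal is correct and follows essentially the same route as the paper's proof: the same tensorial $\hat{\nabla}$-bookkeeping grouping terms into $(I)$, $(II)$, $(III)$ via the key identity that the lowered torsions of $g$ and $g_0$ coincide (a consequence of $\partial\omega=\partial\omega_0$), the same diagonalization estimates for $(II)$ and $(III)$ (with $\tr{\hat{g}}{g}\geq 1$ compensating precisely the unpaired factor $1/\tr{\hat{g}}{g}$), and the same completion of the square for $(I)$, which in the paper is the nonnegative quantity $K=|B|^2\geq 0$ with $B_{i\ov{j}k}=\hat{\nabla}_i g_{k\ov{j}}-g_{i\ov{j}}\frac{\hat{\nabla}_k \tr{\hat{g}}{g}}{\tr{\hat{g}}{g}}+\hat{T}^p_{ik}g_{p\ov{j}}$, leaving exactly the stated $T_0$-residue after the nonpositive part is discarded. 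The only correction is one of attribution: this completion of the square is Cherrier's generalization of the Aubin--Yau second-order argument (as the paper cites), not the Phong--Sturm trick, which in this paper refers to the different device of adding $1/(\varphi+\tilde{C})$ to the maximum-principle quantities in the later sections.
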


\begin{proof}
First,
$$\Delta \tr{\hat{g}}g=g^{\ov{j}i} \hat{\nabla}_i \hat{\nabla}_{\ov{j}}(\hat{g}^{\ov{\ell}k}g_{k\ov{\ell}})=g^{\ov{j}i}\hat{g}^{\ov{\ell}k}\hat{\nabla}_i \hat{\nabla}_{\ov{j}} g_{k\ov{\ell}}.$$
From the definition of covariant derivative,
$$\hat{\nabla}_{\ov{j}} g_{k\ov{\ell}}=\de_{\ov{j}}g_{k\ov{\ell}}-\ov{\hat{\Gamma}^p_{j\ell}}
g_{k\ov{p}},$$
and skew-symmetrizing in $j,\ell$
$$\hat{\nabla}_{\ov{j}} g_{k\ov{\ell}}=\hat{\nabla}_{\ov{\ell}} g_{k\ov{j}}+(\db\omega)_{\ov{j}k\ov{\ell}}-\ov{\hat{T}^p_{j\ell}}g_{k\ov{p}}.$$
But from (\ref{plusclosed}), $\db\omega=\db\omega_0$, and we may rewrite this in terms of  the torsion of $g_0$ as $(\db\omega_0)_{\ov{j}k\ov{\ell}}= \ov{(T_0)^p_{j\ell}}(g_0)_{k\ov{p}}$.
Thus
$$\hat{\nabla}_i\hat{\nabla}_{\ov{j}}g_{k\ov{\ell}} = \hat{\nabla}_i\hat{\nabla}_{\ov{\ell}} g_{k\ov{j}}+\hat{\nabla}_i \left( \ov{(T_0)^p_{j\ell}}(g_0)_{k\ov{p}}\right)- (\hat{\nabla}_i  \ov{\hat{T}^q_{j\ell}} ) g_{k\ov{q}} -\ov{\hat{T}^q_{j\ell}}\hat{\nabla}_ig_{k\ov{q}}.$$
Switching covariant derivatives
$$\hat{\nabla}_i \hat{\nabla}_{\ov{\ell}} g_{k\ov{j}}= \hat{\nabla}_{\ov{\ell}} \hat{\nabla}_i g_{k\ov{j}}-
\hat{R}_{i\ov{\ell}k\ov{q}}\hat{g}^{\ov{q}p}g_{p\ov{j}}+
\hat{R}_{i\ov{\ell}p\ov{j}}\hat{g}^{\ov{q}p}g_{k\ov{q}}.$$
Arguing as above,
$$\hat{\nabla}_{\ov{\ell}}\hat{\nabla}_i g_{k\ov{j}}=\hat{\nabla}_{\ov{\ell}}\hat{\nabla}_k g_{i\ov{j}}+ \hat{\nabla}_{\ov{\ell}} \left( (T_0)^p_{ik} (g_0)_{p \ov{j}} \right)
-(\hat{\nabla}_{\ov{\ell}} \hat{T}^p_{ik})g_{p\ov{j}}- \hat{T}^p_{ik}\hat{\nabla}_{\ov{\ell}}g_{p\ov{j}}.$$
Combining all of these we have
\begin{equation}\label{laplac}
\begin{split}
\Delta \tr{\hat{g}}g&=g^{\ov{j}i}\hat{g}^{\ov{\ell}k}\hat{\nabla}_{\ov{\ell}}\hat{\nabla}_k g_{i\ov{j}}
+g^{\ov{j}i}\hat{g}^{\ov{\ell}k}\bigg( \hat{\nabla}_i \left( \ov{(T_0)^p_{j\ell}}(g_0)_{k\ov{p}}\right)\\
&\ \ \ +\hat{\nabla}_{\ov{\ell}} \left( (T_0)^p_{ik} (g_0)_{p \ov{j}} \right) -(\hat{\nabla}_i\ov{\hat{T}^q_{j\ell}}
-\hat{R}_{i\ov{\ell}p\ov{j}}\hat{g}^{\ov{q}p})g_{k\ov{q}}\\
&\ \ \ -(\hat{\nabla}_{\ov{\ell}}\hat{T}^p_{ik}+\hat{R}_{i\ov{\ell}k\ov{q}}\hat{g}^{\ov{q}p})g_{p\ov{j}}
-\ov{\hat{T}^q_{j\ell}}\hat{\nabla}_ig_{k\ov{q}}-\hat{T}^p_{ik}\hat{\nabla}_{\ov{\ell}}g_{p\ov{j}}\bigg).
\end{split}\end{equation}
We will make a change to the second to last term using
\begin{equation}\label{laplac2}
\ov{\hat{T}^q_{j\ell}}\hat{\nabla}_ig_{k\ov{q}}=\ov{\hat{T}^q_{j\ell}}\hat{\nabla}_kg_{i\ov{q}}
+ \ov{\hat{T}^q_{j\ell}}  (T_0)^p_{ik} (g_0)_{p \ov{q}}
-\hat{T}^p_{ik}\ov{\hat{T}^q_{j\ell}}g_{p\ov{q}}.
\end{equation}
On the other hand,
$$\ddt{} \tr{\hat{g}}g=\hat{g}^{\ov{\ell}k}\de_k\de_{\ov{\ell}}
\log\det(g)=g^{\ov{j}i}\hat{g}^{\ov{\ell}k}\de_k\de_{\ov{\ell}} g_{i\ov{j}}
-g^{\ov{j}p}g^{\ov{q}i}\hat{g}^{\ov{\ell}k}\de_k g_{i\ov{j}}\de_{\ov{\ell}}g_{p\ov{q}},$$
and we wish to convert the partial derivatives into covariant ones. For this, we use the relations
$$\de_k g_{i\ov{j}}=\hat{\nabla}_k g_{i\ov{j}}+\hat{\Gamma}_{ki}^r g_{r\ov{j}}$$
and
\[\begin{split}
\de_{\ov{\ell}}\de_k g_{i\ov{j}}&=\de_{\ov{\ell}}\hat{\nabla}_k g_{i\ov{j}}+\left(\de_{\ov{\ell}}\hat{\Gamma}_{ki}^r \right) g_{r\ov{j}}+\hat{\Gamma}_{ki}^r \de_{\ov{\ell}}g_{r\ov{j}}\\
&=\hat{\nabla}_{\ov{\ell}}\hat{\nabla}_k g_{i\ov{j}}+\ov{\hat{\Gamma}^s_{\ell j}}\hat{\nabla}_k g_{i\ov{s}}-\hat{R}_{k\ov{\ell} i\ov{q}}\hat{g}^{\ov{q}r} g_{r\ov{j}}+\hat{\Gamma}_{ki}^r \hat{\nabla}_{\ov{\ell}}g_{r\ov{j}}+\hat{\Gamma}_{ki}^r \ov{\hat{\Gamma}^s_{\ell j}}g_{r\ov{s}}.
\end{split}\]
Substituting we get
\begin{equation} \label{ddttrace}
\ddt{} \tr{\hat{g}}g=g^{\ov{j}i}\hat{g}^{\ov{\ell}k}\hat{\nabla}_{\ov{\ell}}\hat{\nabla}_k g_{i\ov{j}}
-g^{\ov{j}p}g^{\ov{q}i}\hat{g}^{\ov{\ell}k}\hat{\nabla}_k g_{i\ov{j}}\hat{\nabla}_{\ov{\ell}}g_{p\ov{q}}
-\hat{g}^{\ov{\ell}k}\hat{g}^{\ov{j}i} \hat{R}_{k\ov{\ell} i\ov{j}}
\end{equation}
and so, combining (\ref{laplac}), (\ref{laplac2}) and (\ref{ddttrace}),
\[\begin{split}
&\left(\ddt{}-\Delta \right) \log\tr{\hat{g}}g\\
&=\frac{1}{\tr{\hat{g}}g}\Bigg(\bigg[-g^{\ov{j}p}g^{\ov{q}i}\hat{g}^{\ov{\ell}k}\hat{\nabla}_k g_{i\ov{j}}\hat{\nabla}_{\ov{\ell}}g_{p\ov{q}}+\frac{1}{\tr{\hat{g}}g} g^{\ov{\ell}k}\hat{\nabla}_k \tr{\hat{g}}{g}
\hat{\nabla}_{\ov{\ell}}\tr{\hat{g}}{g}\\
&\ \ \ - 2\mathrm{Re}\left(g^{\ov{j}i}\hat{g}^{\ov{\ell}k} \hat{T}^p_{ki}\hat{\nabla}_{\ov{\ell}}g_{p\ov{j}}\right)
-g^{\ov{j}i}\hat{g}^{\ov{\ell}k}\hat{T}^p_{ik}\ov{\hat{T}^q_{j\ell}}g_{p\ov{q}}\bigg]\\
&\ \ \ +\bigg[g^{\ov{j}i}\hat{g}^{\ov{\ell}k}(\hat{\nabla}_i\ov{\hat{T}^q_{j\ell}}
-\hat{R}_{i\ov{\ell}p\ov{j}}\hat{g}^{\ov{q}p})g_{k\ov{q}}
+\hat{g}^{\ov{\ell}k}(\hat{\nabla}_{\ov{\ell}}\hat{T}^i_{ik}+ \hat{R}_{i\ov{\ell}k\ov{q}}\hat{g}^{\ov{q}i}
- \hat{R}_{k\ov{\ell} i\ov{q}}\hat{g}^{\ov{q}i})\bigg]\\
&\ \ \ -\bigg[g^{\ov{j}i}\hat{g}^{\ov{\ell}k}\left(\hat{\nabla}_i \left( \ov{(T_0)^p_{j\ell}}(g_0)_{k\ov{p}}\right) +\hat{\nabla}_{\ov{\ell}} \left( (T_0)^p_{ik} (g_0)_{p \ov{j}} \right)\right)\\
&\ \ \ -g^{\ov{j}i}\hat{g}^{\ov{\ell}k}\ov{\hat{T}^q_{j\ell}} (T_0)^p_{ik} (g_0)_{p\ov{q}}\bigg] \Bigg).
\end{split}\]
Note that in the first set of square brackets above we have used the identity $\hat{T}^p_{ki} = -\hat{T}^p_{ik}$.
To obtain (\ref{long}) it remains to show that the expression $$\hat{g}^{\ov{\ell}k}(\hat{\nabla}_{\ov{\ell}}\hat{T}^i_{ik}+ \hat{R}_{i\ov{\ell}k\ov{q}}\hat{g}^{\ov{q}i}
- \hat{R}_{k\ov{\ell} i\ov{q}}\hat{g}^{\ov{q}i})$$ in the second set of square brackets vanishes.
 To see this, note that
$$\hat{R}_{i\ov{\ell}k\ov{q}}-\hat{R}_{k\ov{\ell}i\ov{q}}=-\hat{g}_{j\ov{q}}\de_{\ov{\ell}}\hat{T}^j_{ik}=-\hat{g}_{j\ov{q}}\hat{\nabla}_{\ov{\ell}}\hat{T}^j_{ik},$$
and so
$$\hat{g}^{\ov{\ell}k}\hat{g}^{\ov{q}i}(\hat{R}_{i\ov{\ell}k\ov{q}}
- \hat{R}_{k\ov{\ell} i\ov{q}})=-\hat{g}^{\ov{\ell}k}\hat{g}^{\ov{q}i}\hat{g}_{j\ov{q}} \hat{\nabla}_{\ov{\ell}} \hat{T}^j_{ik}
=-\hat{g}^{\ov{\ell}k} \hat{\nabla}_{\ov{\ell}} \hat{T}^i_{ik},$$
so that
\begin{equation}\label{later}
\hat{g}^{\ov{\ell}k}(\hat{\nabla}_{\ov{\ell}} \hat{T}^i_{ik}+ \hat{R}_{i\ov{\ell}k\ov{q}}\hat{g}^{\ov{q}i}
- \hat{R}_{k\ov{\ell} i\ov{q}}\hat{g}^{\ov{q}i})=0,
\end{equation}
establishing \eqref{long}.

We now give the estimates on $(I), (II), (III)$.  The bounds on $(II)$ and $(III)$ follow immediately from the definitions of these quantities.  It remains to prove the bound on $(I)$.

In the special case when $g$ and $\hat{g}$ are K\"ahler, then it was shown by Aubin \cite{Au} and Yau \cite{Y} that $(I)\leq 0$.
To bound $(I)$ in general we follow Cherrier's generalization of this argument \cite{Ch}, as follows. Consider the inequality
$$K=\hat{g}^{\ov{\ell}i}g^{\ov{j}p}g^{\ov{q}k} B_{i\ov{j}k} \ov{B_{\ell\ov{p}q}}\geq 0,$$
where $$B_{i\ov{j}k}=\hat{\nabla}_i g_{k\ov{j}}- g_{i\ov{j}} \frac{\hat{\nabla}_k \tr{\hat{g}}g}{\tr{\hat{g}}g}+C_{i\ov{j}k},$$
and $C_{i\ov{j}k}$ will be specified below.
Calculate
\[\begin{split}
K&=\hat{g}^{\ov{\ell}i}g^{\ov{j}p}g^{\ov{q}k}\hat{\nabla}_i g_{k\ov{j}}\hat{\nabla}_{\ov{\ell}} g_{p\ov{q}}
+\frac{1}{\tr{\hat{g}}g}g^{\ov{q}k}\hat{\nabla}_k \tr{\hat{g}}g \hat{\nabla}_{\ov{q}} \tr{\hat{g}}g \\
&\ \ \ -2\mathrm{Re}\left(
\hat{g}^{\ov{\ell}i}g^{\ov{q}k}\hat{\nabla}_i g_{k\ov{\ell}} \frac{\hat{\nabla}_{\ov{q}} \tr{\hat{g}}g}{\tr{\hat{g}}g}\right)
-2\mathrm{Re}\left( \hat{g}^{\ov{j}i}g^{\ov{q}k}C_{i\ov{j}k} \frac{\hat{\nabla}_{\ov{q}} \tr{\hat{g}}g}{\tr{\hat{g}}g}\right)\\
&\ \ \ +2\mathrm{Re}\left( \hat{g}^{\ov{\ell}i}g^{\ov{j}p}g^{\ov{q}k} C_{i\ov{j}k} \hat{\nabla}_{\ov{\ell}} g_{p\ov{q}}\right)+\hat{g}^{\ov{\ell}i}g^{\ov{j}p}g^{\ov{q}k} C_{i\ov{j}k} \ov{C_{\ell\ov{p}q}}.
\end{split}\]
Using the identity
$$\hat{\nabla}_ig_{k\ov{\ell}}=\hat{\nabla}_kg_{i\ov{\ell}}
+  (T_0)^p_{ik} (g_0)_{p \ov{\ell}}
-\hat{T}^p_{ik}g_{p\ov{\ell}},$$
in the third term we get
\[\begin{split}
K&=\hat{g}^{\ov{\ell}i}g^{\ov{j}p}g^{\ov{q}k}\hat{\nabla}_i g_{k\ov{j}}\hat{\nabla}_{\ov{\ell}} g_{p\ov{q}}
-\frac{1}{\tr{\hat{g}}g}g^{\ov{q}k}\hat{\nabla}_k \tr{\hat{g}}g \hat{\nabla}_{\ov{q}} \tr{\hat{g}}g \\
&\ \ \ -2\mathrm{Re}\left(
\hat{g}^{\ov{\ell}i}g^{\ov{q}k}  (T_0)^p_{ik} (g_0)_{p \ov{\ell}}
 \frac{\hat{\nabla}_{\ov{q}} \tr{\hat{g}}g}{\tr{\hat{g}}g}\right)\\
&\ \ \ -2\mathrm{Re}\left( \hat{g}^{\ov{j}i}g^{\ov{q}k}\big[C_{i\ov{j}k}-\hat{T}^p_{ik}g_{p\ov{j}}\big] \frac{\hat{\nabla}_{\ov{q}} \tr{\hat{g}}g}{\tr{\hat{g}}g}\right)\\
&\ \ \ +2\mathrm{Re}\left( \hat{g}^{\ov{\ell}i}g^{\ov{j}p}g^{\ov{q}k} C_{i\ov{j}k} \hat{\nabla}_{\ov{\ell}} g_{p\ov{q}}\right)
+\hat{g}^{\ov{\ell}i}g^{\ov{j}p}g^{\ov{q}k} C_{i\ov{j}k} \ov{C_{\ell\ov{p}q}},
\end{split}\]
and comparing this expression with $(I)$ we get
\[\begin{split}
(I) &= \frac{1}{\tr{\hat{g}}{g}} \bigg( -K-2\mathrm{Re}\left(
\hat{g}^{\ov{\ell}i}g^{\ov{q}k}  (T_0)^p_{ik} (g_0)_{p \ov{\ell}}
 \frac{\hat{\nabla}_{\ov{q}} \tr{\hat{g}}g}{\tr{\hat{g}}g}\right)\\
&\ \ \ -2\mathrm{Re}\left( \hat{g}^{\ov{j}i}g^{\ov{q}k}\big[C_{i\ov{j}k}-\hat{T}^p_{ik}g_{p\ov{j}}\big] \frac{\hat{\nabla}_{\ov{q}} \tr{\hat{g}}g}{\tr{\hat{g}}g}\right)\\
&\ \ \ +2\mathrm{Re}\left( \hat{g}^{\ov{\ell}i}g^{\ov{j}p}g^{\ov{q}k} [C_{i\ov{j}k}-\hat{T}^r_{ik}g_{r\ov{j}}\big]  \hat{\nabla}_{\ov{\ell}} g_{p\ov{q}}\right)\\
&\ \ \ +\hat{g}^{\ov{\ell}i}g^{\ov{j}p}g^{\ov{q}k} C_{i\ov{j}k} \ov{C_{\ell\ov{p}q}}
-g^{\ov{j}i}\hat{g}^{\ov{\ell}k} \hat{T}^p_{ik}\ov{\hat{T}^q_{j\ell}}g_{p\ov{q}} \bigg),
\end{split}\]
and so the obvious choice to make is $$C_{i\ov{j}k}= \hat{T}^p_{ik}g_{p\ov{j}},$$
which makes three terms disappear, and gives us
\begin{equation}\label{later2}
(I)=\frac{1}{\tr{\hat{g}}{g}}\Bigg( - K-2\mathrm{Re}\left(
\hat{g}^{\ov{\ell}i}g^{\ov{q}k} (T_0)^p_{ik}(g_0)_{p\ov{\ell}} \frac{\hat{\nabla}_{\ov{q}} \tr{\hat{g}}g}{\tr{\hat{g}}g}\right) \Bigg).
\end{equation}
Hence
\begin{equation}\label{ineq} \nonumber
(I) \leq  \frac{2}{(\tr{\hat{g}}g)^2}\mathrm{Re}\left(
\hat{g}^{\ov{\ell}i}g^{\ov{q}k} (T_0)^p_{ki}(g_0)_{p\ov{\ell}} \hat{\nabla}_{\ov{q}} \tr{\hat{g}}g\right),
\end{equation}
as required.
\end{proof}

\section{Maximal Existence Time for the flow} \label{section:maximal}

In this section we give  proofs of Theorem \ref{theorem:maximal} and Theorem \ref{theorem:surfaces}.

\begin{proof}[Proof of Theorem \ref{theorem:maximal}]

As an aside, note that
$T$ can also be defined by
\[T = \sup \{ T_0 \ge 0 \ | \ \forall t \in [0,T_0], \exists \psi \in C^{\infty}(M) \textrm{ with } \alpha_t + \ddbar \psi >0 \},\]
for $\alpha_t$ given by (\ref{alphat}).

Fix $T_0<T$.  We will show there exists a solution of (\ref{crf}) on $[0,T_0)$.   Define reference metrics $\hat{\omega}_t$ for $t \in [0,T_0]$ by
$$\hat{\omega}_t := \alpha_t + \frac{t}{T_0} \ddbar f_{T_0} = \frac{T_0-t}{T_0} \omega_0 + \frac{t}{T_0} (\alpha_{T_0} + \ddbar f_{T_0}),$$
with $f_{T_0}$ a function satisfying $\alpha_{T_0} + \ddbar f_{T_0} >0$.  Note that  these Hermitian metrics vary smoothly on the compact interval $[0,T_0]$ and hence  we have estimates on $\hat{\omega}_t$ which are uniform for $t$ in $[0,T_0]$.
It is convenient to write $\hat{\omega}_t = \omega_0 + t\chi$ where $\chi$ is given by
$$\chi = \frac{1}{T_0} \ddbar f_{T_0} - \Ric(\omega_0).$$

Define a volume form $\Omega = \omega_0^n e^{\frac{f_{T_0}}{T_0}}$, which satisfies $\ddbar \log \Omega = \chi = \ddt{} \hat{\omega}_t$.  Now consider the parabolic complex Monge-Amp\`ere equation
\begin{equation} \label{ma}
\ddt{\varphi} = \log \frac{(\hat{\omega}_t + \ddbar \varphi)^n}{\Omega}, \quad \hat{\omega}_t + \ddbar \varphi >0, \quad \varphi|_{t=0}=0.
\end{equation}

If $\vp$ solves \eqref{ma} on some time interval, then taking $\ddbar$ of \eqref{ma} shows that $\omega=\hat{\omega}_t+\ddbar\vp$ solves \eqref{crf} on the same time interval. Conversely, if $\omega$ solves \eqref{crf} on an interval contained in $[0,T_0]$ then we have
$$\ddt{}(\omega-\hat{\omega}_t)=-\Ric(\omega)-\chi=\ddbar \left(\log\frac{\omega^n}{\omega_0^n}-\frac{f_{T_0}}{T_0}\right)=\ddbar \log\frac{\omega^n}{\Omega},$$
so if we choose $\vp$ to solve
$$\ddt{\vp}=\log\frac{\omega^n}{\Omega},\quad \vp|_{t=0}=0,$$
which is an ODE in $t$ for each fixed point on $M$,
then we have $\ddt{}(\omega-\hat{\omega}_t-\ddbar\vp)=0$
so that indeed $\omega=\hat{\omega}_t+\ddbar \vp$ and $\vp$ satisfies \eqref{ma}. Therefore, the two flows \eqref{crf} and \eqref{ma} are essentially equivalent.

We know by standard parabolic theory that there exists a unique maximal solution of (\ref{ma}) on some time interval $[0,\Tmax)$ with $ \Tmax >0$.  We may as well assume that   $\Tmax \le T_0$.  Assume for a contradiction that $\Tmax < T_0$.

We now prove uniform estimates for $\varphi$ solving $(\ref{ma})$ up to the maximal time:

\begin{lemma} \label{lemma:Testimates} There is a positive constant $C_0$, independent of $t \in [0,T_{\emph{max}})$,  such that
\begin{enumerate}
\item[(i)] $\displaystyle{\| \varphi (t) \|_{C^0} \le C_0}$.
\smallskip
\item[(ii)] $\displaystyle{ \| \dot{\varphi} (t) \|_{C^0} \le C_0}$.
\smallskip
\item[(iii)] $\displaystyle{C_0^{-1} \omega_0 \le \omega(t) \le C_0 \omega_0}$.
\smallskip
\item[(iv)] For each $k=0,1,2, \ldots$, there exist constants $C_k$ such that $$\displaystyle{ \| \varphi (t) \|_{C^k(\omega_0)} \le C_k}.$$
\end{enumerate}
\end{lemma}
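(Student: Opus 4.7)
The plan is to establish the four estimates in the classical cascade order: the $C^0$ bound on $\varphi$ first, then on $\dot\varphi$, then the second-order bound on $\omega$, and finally all higher-order estimates via a standard parabolic bootstrap. All estimates will be uniform on $[0,T_{\max})$, so together with short-time existence for the parabolic complex Monge-Amp\`ere equation they contradict the maximality of $T_{\max}$ and force $T_{\max}=T_0$.

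For (i), I would apply the maximum principle directly to \eqref{ma}: at a spatial maximum of $\varphi$ one has $\ddbar \varphi \le 0$, hence $\dot\varphi \le \log(\hat\omega_t^n / \Omega) \le C$ uniformly, so $\max_M \varphi$ grows at most linearly in $t$ on $[0,T_0]$; the lower bound is symmetric, using that $\hat\omega_t$ is uniformly positive on $[0,T_0]$. For (ii), differentiating \eqref{ma} in $t$ gives $(\partial_t - \Delta)\dot\varphi = \tr{\omega}{\chi}$, which does not yield a maximum principle directly. Instead, following \cite{Ca, TZ}, I would apply the maximum principle to the auxiliary quantities $t\dot\varphi - \varphi - nt$ and $(T_0 - t)\dot\varphi + \varphi + nt$; direct computation using $\Delta \varphi = n - \tr{\omega}{\hat\omega_t}$ gives
\begin{equation*}
(\partial_t - \Delta)(t\dot\varphi - \varphi - nt) = -\tr{\omega}{\omega_0}, \qquad (\partial_t - \Delta)\bigl((T_0-t)\dot\varphi + \varphi + nt\bigr) = \tr{\omega}{\hat\omega_{T_0}},
\end{equation*}
whose signs, combined with the strict positivity of $\omega_0$ and $\hat\omega_{T_0}$ and the $C^0$ bound from (i), give the required two-sided bound on $\dot\varphi$.

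The main obstacle is the second-order bound in (iii), where the torsion of $g_0$ produces terms absent in the K\"ahler case. I would apply Proposition \ref{prop:bigcalc} with $\hat g = g_0$ to the quantity $Q = \log\tr{g_0}{g} - A\varphi$ for a large constant $A$. At an interior space-time maximum of $Q$, Cherrier's bound on $(I)$ controls the troublesome torsion contributions by a multiple of $\hat\nabla \tr{g_0}{g}$, which via the critical point identity $\nabla \log \tr{g_0}{g} = A \nabla \varphi$ is reduced to an expression bounded by a small multiple of $A^2 \tr{g}{g_0}$ plus lower-order terms; combining this with $(II), (III) \le C \tr{g}{g_0}$ from Proposition \ref{prop:bigcalc}, the $C^0$ bound on $\dot\varphi$ from (ii), and the identity $\Delta\varphi = n - \tr{\omega}{\hat\omega_t}$, one arrives at an inequality of the shape
\begin{equation*}
0 \le C \tr{g}{g_0} - A \tr{\omega}{\hat\omega_t} + C',
\end{equation*}
and since $\hat\omega_t \ge c\,\omega_0$ uniformly on $[0,T_0]$, choosing $A$ large absorbs the $\tr{g}{g_0}$ term and yields an upper bound on $\tr{g_0}{g}$ at the maximum. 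Exponentiating $Q$ then propagates the bound to all of $M$, and the reverse inequality $g \ge C^{-1} g_0$ follows from the Monge-Amp\`ere equation $\det g = e^{\dot\varphi} \det g_0 \cdot (\Omega/\omega_0^n)$ together with the upper trace bound and the arithmetic-geometric inequality.

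Finally, for (iv), once $\omega$ is uniformly equivalent to $\omega_0$ the equation \eqref{ma} is uniformly parabolic, and the complex Evans-Krylov theorem applied to $\log\det$ yields a uniform $C^{2,\alpha}$ bound on $\varphi$ in parabolic H\"older norm; a standard parabolic Schauder bootstrap then gives uniform $C^k(\omega_0)$ bounds for every $k$. With these uniform bounds on $[0,T_{\max})$, $\varphi(T_{\max})$ extends to a smooth Hermitian potential and short-time existence extends the flow beyond $T_{\max}$, contradicting maximality.
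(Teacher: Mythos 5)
Your parts (i), (ii) and (iv) are essentially the paper's own proof: the same linear-in-$t$ barrier argument for (i), the same auxiliary quantities $(T_0-t)\dot\vp+\vp+nt$ and $t\dot\vp-\vp-nt$ with the same evolution identities for (ii), and for (iv) the paper simply cites Gill's higher-order estimates \cite{G}, which is what your Evans--Krylov plus Schauder bootstrap amounts to. The concluding continuation argument past $T_{\textrm{max}}$ is also the paper's.

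The genuine gap is in (iii). With your test function $Q=\log\tr{g_0}{g}-A\vp$, the critical point identity gives $\partial_i\tr{g_0}{g}=A(\tr{g_0}{g})\,\partial_i\vp$, so the torsion term from Proposition \ref{prop:bigcalc} becomes
\begin{equation*}
\frac{2A}{\tr{g_0}{g}}\,\mathrm{Re}\left(g^{\ov{\ell}k}(T_0)^p_{kp}\,\partial_{\ov{\ell}}\vp\right),
\end{equation*}
and any Cauchy--Schwarz/Young estimate of this expression necessarily produces a term of the form $\ve\,|\partial\vp|^2_g$. This is not a ``lower-order term'': no gradient estimate for $\vp$ is available at this stage, and nothing in the evolution of $\log\tr{g_0}{g}-A\vp$ supplies a negative gradient-squared term to absorb it, since $(\partial_t-\Delta)(-A\vp)$ only contributes $-A\dot\vp+A(n-\tr{\omega}{\hat\omega_t})$. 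So your claim that the torsion contribution reduces to ``a small multiple of $A^2\tr{g}{g_0}$ plus lower-order terms'' is exactly the step that fails; indeed this is the known obstruction in the Hermitian category, where this very test function closes up only for $n=2$ or balanced metrics \cite{TW1}. The paper's fix --- advertised in the introduction as the key new ingredient --- is the Phong--Sturm term: one uses $Q_2=\log\tr{g_0}{g}-A\vp+\frac{1}{\vp+\tilde{C}}$, and the computation of $(\partial_t-\Delta)\frac{1}{\vp+\tilde{C}}$ contains the good term $-\frac{2}{(\vp+\tilde{C})^3}|\partial\vp|^2_g$, which absorbs precisely the $\frac{|\partial\vp|^2_g}{(\vp+\tilde{C})^3}$ produced by the torsion term, after a case split on whether $(\tr{g_0}{g})^2\ge A^2(\vp+\tilde{C})^3$ (in the opposite case one is already done). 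Without this term --- or an appeal to Gill's parabolic second-order estimate \cite{G}, which the paper notes would also suffice --- your maximum principle argument does not close. A smaller point: the absorbed inequality bounds $\tr{g}{g_0}$, not $\tr{g_0}{g}$, at the maximum; one still needs $\tr{g_0}{g}\le\frac{1}{(n-1)!}(\tr{g}{g_0})^{n-1}\det g/\det g_0$ together with the volume-ratio bound from (ii) to finish, a step your sketch glosses over.
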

\begin{proof}
The proofs of (i) and (ii) follow almost verbatim from the K\"ahler case \cite{TZ}, but we include brief arguments for the reader's convenience.
For (i), put $\psi = \varphi - At$ for a constant $A>0$.  Suppose that a maximum of $\psi$ occurs at a point with $t>0$.  Then at this point, by the maximum principle,
$$0 \le \ddt{} \psi = \log \frac{ (\hat{\omega}_t + \ddbar \psi)^n}{\Omega} -A <0,$$
if $A$ is chosen sufficiently large, a contradiction.  Here we are using the fact that $\hat{\omega}_t$ is a smooth family of metrics on $[0, \Tmax]$.  This gives an upper bound for $\psi$ and hence $\varphi$.  The lower bound is proved similarly.

For a lower bound for $\dot{\varphi}$, first note that
$$\ddt{} \dot{\varphi} = \Delta \dot{\varphi} + \tr{\omega}{\chi}.$$
Put $Q_0 = (T_0-t) \dot{\varphi} + \varphi + nt$ and compute
$$\left( \ddt{} - \Delta \right) Q_0 = (T_0-t) \tr{\omega}{\chi} + n - \Delta \varphi = \tr{\omega}{(\hat{\omega}_t + (T_0-t) \chi)} = \tr{\omega}{\hat{\omega}_{T_0}} >0.$$
Hence $Q_0$ is bounded below by the maximum principle and this gives a lower bound for $\dot{\varphi}$ (since we assume $\Tmax< T_0$.)

For the upper bound of $\dot{\varphi}$, define $Q_1 = t\dot{\varphi} - \varphi -nt$.  Then
$$\left( \ddt{} - \Delta \right) Q_1 = t \, \tr{\omega}{\chi} - n + \tr{\omega}{(\omega- \hat{\omega}_t)} = \tr{\omega}{(t\chi - \hat{\omega}_t)} = -\tr{\omega}{\omega_0} \le 0,$$
and an upper bound for $Q_1$ and hence $\dot{\varphi}$ follows from the maximum principle.

Note that by (ii) the volume form $\omega^n$ is uniformly equivalent to a fixed volume form $\omega_0^n$, say.  To prove (iii) then, it suffices to obtain a uniform upper bound of $\tr{g_0}{g}$.
 For this, we could apply the second order estimate  of Gill \cite{G}.  Instead, we give a different proof which uses a trick due to Phong-Sturm \cite{PS}, since we will use it again later in Sections \ref{section:divisor} and \ref{section:monge}.
Choose a constant $\tilde{C}$ so that $\varphi + \tilde{C} \ge 1$.  Following  \cite{PS}, we compute the evolution of
$$Q_2 = \log \tr{g_0}{g} - A \varphi + \frac{1}{\varphi+ \tilde{C}},$$
for $A$ a large constant to be determined.  We wish to show that at a point where $Q_2$ achieves a maximum, $\tr{g_0}{g}$ is uniformly bounded from above.  It will then follow from (i) that $\tr{g_0}{g}$ is bounded from above on $M$.

We apply Proposition \ref{prop:bigcalc} with $\hat{g}=g_0$ to obtain
\begin{align} \label{eqnfrombigcalc0}
\left( \ddt{} - \Delta \right) \log \tr{g_0}{g} & \le  \frac{2}{(\tr{g_0}{g})^2} \textrm{Re} \left( g^{\ov{\ell}k}  (T_0)^p_{kp}  \partial_{\ov{\ell}} \tr{g_0}{g} \right) + C \tr{g}{g_0},
\end{align}
assuming we are calculating at a point with $\tr{g_0}{g} \ge 1$.

To bound the first term on the right hand side of (\ref{eqnfrombigcalc0}), we note that at a maximum point of $Q_2$ we have $\partial_i Q_2=0$ and hence
\begin{equation}
\frac{1}{\tr{g_0}{g}} \partial_i \tr{g_0}{g} - A \partial_i \varphi - \frac{1}{(\varphi+\tilde{C})^2} \partial_i \varphi=0.
\end{equation}
Then at this maximum point for $Q_2$,
\begin{align} \nonumber \lefteqn{
\left| \frac{2}{(\tr{g_0}{g})^2} \textrm{Re} \left( g^{\ov{\ell}k}  (T_0)^p_{kp}  \partial_{\ov{\ell}} \tr{g_0}{g} \right) \right| } \\ \nonumber
& \le \left| \frac{2}{\tr{g_0}{g}} \textrm{Re} \left( \left(A+ \frac{1}{(\varphi+\tilde{C})^2}\right) g^{\ov{\ell} k} (T_0)^p_{kp} (\partial_{\ov{\ell}} \varphi )  \right) \right|  \\ \label{bd2Re}
& \le \frac{ | \partial \varphi|^2_{g}}{(\varphi +\tilde{C})^3} + C A^2 (\varphi+\tilde{C})^3 \frac{\tr{g}{g_0}}{(\tr{g_0}{g})^2},
\end{align}
for a uniform constant $C$.
But we may assume that at the maximum of $Q_2$ we have
$(\tr{g_0}{g})^2 \ge A^2 (\varphi + \tilde{C})^3,$
since otherwise we already have the required bound on $\tr{g_0}{g}$.

Thus at the maximum of $Q_2$, using (\ref{eqnfrombigcalc0}), (\ref{bd2Re}),
\begin{align} \nonumber
0 \le \left( \ddt{} - \Delta \right) Q_2  \le &\frac{ | \partial \varphi|^2_{g}}{(\varphi +\tilde{C})^3}  +C \tr{g}{g_0} - \left(A +\frac{1}{(\varphi +\tilde{C})^2}\right) \dot{\varphi}  \\ \nonumber
&  + \left( A + \frac{1}{(\varphi+\tilde{C})^2} \right)\tr{g}{(g - \hat{g}_t )} - \frac{2}{ (\varphi+\tilde{C})^3} | \partial \varphi|^2_g.
\end{align}
Since $\hat{g}_t \ge c_0 g_0$ with $t\in [0,\Tmax]$, for some uniform $c_0>0$, we may
choose $A$ sufficiently large so that $A \tr{g}{\hat{g}_t} \ge (C+1) \tr{g}{g_0}$.  Since $\dot{\varphi}$ is bounded from (ii), we obtain
$$\tr{g}{g_0} \le C'$$
at the maximum of $Q_2$, for a uniform constant $C'$.
Hence at the maximum of $Q_2$,
$$\tr{g_0}{g} \le \frac{1}{(n-1)!} (\tr{g}{g_0})^{n-1} \frac{\det g}{\det g_0} \le C'',$$
where we have applied (ii) again.   Hence $\tr{g_0}{g}$ is uniformly bounded from above on $M$, giving (iii).

Part (iv) follows from the higher order estimates of Gill \cite{G}.
\end{proof}

It is now straightforward to complete the proof of Theorem \ref{theorem:maximal}.
We have uniform estimates for the flow $\varphi(t)$ on $[0,\Tmax)$.  Taking limits we have a solution on $[0, \Tmax]$.  Applying the standard parabolic short time existence theory we obtain  a solution a little beyond $\Tmax$,  a contradiction.  Hence  there exists a unique solution of (\ref{ma}) on $[0,T_0)$.  Taking $\ddbar$ of (\ref{ma}) gives us a solution of (\ref{crf}) on $[0,T_0)$.  Since $T_0<T$ was chosen arbitrarily, we get a solution of (\ref{crf}) on $[0,T)$.

Uniqueness follows from uniqueness of solutions to (\ref{ma}).  Clearly the flow cannot extend beyond $T$.
\end{proof}

Next we give a proof of Theorem \ref{theorem:surfaces}.

\begin{proof}[Proof of Theorem \ref{theorem:surfaces}]
Note that in general if $\ddb\omega_0=0$ then $\ddb\omega=0$ for all later times.
If $n=2$ this means that the flow \eqref{crf} preserves the Gauduchon condition (recall that a Hermitian metric $\omega$ is \emph{Gauduchon} if $\partial \ov{\partial} \omega^{n-1}=0$).
The key result that we need is due to Buchdahl \cite{Bu2} and it
says that if $\eta$ is a $\de\db$-closed real $(1,1)$ form and $\omega_0$ a Gauduchon metric on a compact complex surface $M$
such that
$$\int_M\eta^2>0,\quad \int_M\eta\wedge\omega_0>0,\quad\int_D\eta>0,$$
for all irreducible effective divisors $D$ on $M$ with $D^2<0$, then there exists a smooth real function $f$ such that $\eta+\ddbar f>0$ is a Gauduchon metric. First of all observe that the following condition
\begin{equation}\label{cond}
\int_M\eta^2>0,\quad \int_M\eta\wedge\omega_0\geq 0,\quad\int_D\eta>0,
\end{equation}
for all $D$ as above is enough to guarantee the same conclusion. Indeed consider
the $(1,1)$ forms $\eta_t=\eta+t\omega_0$, and take $t>0$ large so that $\eta_t$ is a Gauduchon
metric. Then we have
$$\int_M\eta\wedge\eta_t = \int_M\eta^2+t\int_M\eta\wedge\omega_0>0,$$
and so we can apply Buchdahl's result using $\eta_t$ instead of $\omega_0.$
In particular, if \eqref{cond} holds then in fact we have the strict inequality $\int_M\eta\wedge\omega_0>0$.

We now apply this discussion to the $(1,1)$ forms $\alpha_t=\omega_0-t\Ric(\omega_0)$. As we have seen earlier,
the evolving metrics $\omega(t)$ are of the form $\omega(t)=\alpha_t+\ddbar \vp_t$, and so it follows that $T$ is the supremum of all $t\geq 0$ such that
\begin{equation}\label{cond2}
\int_M \alpha_t^2>0, \int_M \alpha_t\wedge\omega_0\geq 0,\int_D \alpha_t>0,
\end{equation}
for all $D$ as above. Furthermore, if \eqref{cond2} holds at some time $t$ then in fact
$\int_M \alpha_t \wedge\omega_0>0.$ Therefore $T$ is also the supremum of all $T_0\geq 0$ such that
\[\int_M \alpha_t^2>0,\int_D \alpha_t>0,\]
hold for all $t\in [0,T_0]$.
\end{proof}

\section{Estimates away from a divisor} \label{section:divisor}

In this section we give the proof of Theorem \ref{theorem:blowdown}.   Let $\omega=\omega(t)$ be the solution of (\ref{crf}) on the maximal time interval $[0,T)$.  We assume in this section that $T<\infty$.

In addition, we make the assumption that there exists a smooth function $f_{T}$ on $M$ such that
\begin{equation} \label{nn}
\hat{\omega}_{T} := \alpha_T + \ddbar f_{T} \ge 0,
\end{equation}
where we recall from (\ref{alphat}) that $\alpha_T = \omega_0 - T \Ric(\omega_0)$.
In the K\"ahler case (\ref{nn}) corresponds to the condition that the limiting K\"ahler class has a nonnegative representative.  Define reference metrics
$$\hat{\omega}_t := \frac{1}{T} \left( (T-t) \omega_0 + t \hat{\omega}_T\right), \ \textrm{for} \ t\in [0,T).$$
Then by the same argument as in the beginning of Section \ref{section:maximal}, we may write $\omega(t) = \hat{\omega}_t + \ddbar \varphi$ where
 $\varphi=\varphi(t)$ solves the parabolic complex Monge-Amp\`ere equation
\begin{equation} \label{ma2}
\ddt{\varphi} = \log \frac{(\hat{\omega}_t + \ddbar \varphi)^n}{\Omega}, \quad \hat{\omega}_t + \ddbar \varphi >0, \quad \varphi|_{t=0}=0,
\end{equation}
for the smooth volume form $\Omega = \omega_0^n e^{\frac{f_{T}}{T}}$.

We begin with a proposition, which is exactly analogous to a result for the K\"ahler-Ricci flow \cite{Ti, TZ} (see also the expositions in \cite{SW, SW4}).

\begin{proposition}  \label{prop:nn} With the assumptions above, there exists a constant  $C$ such that for all $t\in [0,T)$,
\begin{enumerate}
\item[(i)]  $\displaystyle{\| \varphi (t) \|_{C^0} \le C.}$
\smallskip
\item[(ii)] $\displaystyle{\dot{\varphi}(t) \le C}.$
\end{enumerate}
\end{proposition}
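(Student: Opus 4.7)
The proof adapts the Kähler-Ricci flow argument of Tian-Zhang \cite{TZ} (see also the exposition in \cite{SW4}) to the Hermitian setting, with the uniform $L^\infty$ estimate for the complex Monge-Ampère equation on Hermitian manifolds from \cite{TW2} replacing Kolodziej's. The upper bound on $\varphi$ and the bound on $\dot\varphi$ follow from maximum principle arguments, while the main obstacle is the lower bound on $\varphi$, caused by the degeneration of the reference metrics $\hat\omega_t$ as $t \to T$ (since $\hat\omega_T$ is only nonnegative).

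For the upper bound on $\varphi$, apply the maximum principle to $\psi = \varphi - At$ for $A$ large: at an interior maximum, $\ddbar\varphi \leq 0$ and the uniform bound $\hat\omega_t \leq C\omega_0$ yield
$$\dot\varphi = \log\frac{(\hat\omega_t + \ddbar\varphi)^n}{\Omega} \leq \log\frac{\hat\omega_t^n}{\Omega} \leq C,$$
contradicting $\dot\psi \geq 0$ for $A$ chosen large. For the upper bound on $\dot\varphi$, I would use the test function $Q = t\dot\varphi - \varphi - nt$ of \cite{TZ}. Writing $\chi = \ddt{}\hat\omega_t = \frac{1}{T}(\hat\omega_T - \omega_0)$ and using the identity $t\chi - \hat\omega_t = -\omega_0$ (which follows directly from the definition of $\hat\omega_t$) together with $\Delta\varphi = n - \tr{\omega}{\hat\omega_t}$, a direct computation gives
$$\left(\ddt{} - \Delta\right)Q = \tr{\omega}{(t\chi - \hat\omega_t)} = -\tr{\omega}{\omega_0} \leq 0,$$
so $Q \leq 0$ by the maximum principle. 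Hence $\dot\varphi \leq (\varphi + nt)/t \leq C$ on $[T/2, T)$; on $[0, T/2]$ the bound follows from Lemma \ref{lemma:Testimates} applied with $T_0 = T/2 < T$.

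The hard part is the lower bound on $\varphi$. Combining (ii) with (\ref{ma2}), the evolving metrics satisfy $(\hat\omega_t + \ddbar\varphi)^n \leq C\Omega$ uniformly on $[0, T)$. The plan is to invoke the $L^\infty$ estimate of \cite{TW2} (see also \cite{Bl, DK}) applied to the normalized potential $\tilde\varphi = \varphi - \sup_M \varphi$. Because the reference form $\hat\omega_t$ degenerates as $t \to T$, the key technical point is a degenerate version of the estimate: under the natural volume hypothesis $\int_M \hat\omega_T^n > 0$, which is satisfied in the intended applications (notably the setting of Theorem \ref{theorem:blowdown}, where $\hat\omega_T = \pi^*\omega_N$), one can still derive a uniform lower bound $\tilde\varphi \geq -C$ by approximating $\hat\omega_T$ by the Hermitian metrics $\hat\omega_T + \varepsilon\omega_0$ and passing to the limit. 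Combined with the upper bound on $\varphi$, this yields $\varphi \geq -C'$, completing the proof of (i).
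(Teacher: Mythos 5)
Your upper bound on $\varphi$ and your proof of (ii) are correct and essentially identical to the paper's (maximum principle applied to $\varphi - At$ and to $t\dot\varphi - \varphi - nt$, as in Lemma \ref{lemma:Testimates}). The genuine gap is in the lower bound on $\varphi$, which you correctly single out as the hard part but then do not actually prove. Two problems. First, you have strengthened the hypotheses: Proposition \ref{prop:nn} assumes only $\hat\omega_T \ge 0$, not $\int_M \hat\omega_T^n > 0$. This is not a harmless addition, because the paper itself invokes the argument of Proposition \ref{prop:nn} in Section \ref{section:hopf} for the Hopf manifolds, where $\hat\omega_T = \frac{\ov{z}_i z_j}{r^4}\mn dz_i\wedge d\ov{z}_j$ has rank one, so $\hat\omega_T^n \equiv 0$ and your volume hypothesis fails. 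Second, even granting that hypothesis, the step you need --- a $C^0$ estimate of the type in \cite{TW2} that remains \emph{uniform} as the reference Hermitian metric $\hat\omega_T + \ve\omega_0$ degenerates ($\ve \to 0$) --- is asserted, not proved. The constant in \cite{TW2} depends on the reference metric and blows up as that metric degenerates; ``approximating and passing to the limit'' provides no mechanism for uniformity in $\ve$. In the K\"ahler setting such degenerate estimates rest on pluripotential theory, and no Hermitian analogue was available; establishing one would be a substantial theorem, not a remark. (Incidentally, the K\"ahler-case lower bound in \cite{TZ} that this proposition imitates is itself a maximum principle argument, not a Kolodziej-type estimate, so the model you cite does not actually proceed the way you suggest.)

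The point is that no such machinery is needed: semi-positivity of $\hat\omega_T$ alone suffices. Since $\hat\omega_t = \frac{T-t}{T}\omega_0 + \frac{t}{T}\hat\omega_T \ge \frac{T-t}{T}\omega_0$, one has $\hat\omega_t^n \ge c_0 (T-t)^n \Omega$ for a uniform $c_0 > 0$. Now apply the maximum principle to the barrier
$$Q = \varphi + n(T-t)(\log(T-t)-1) - (\log c_0 - 1)t,$$
noting that $(T-t)(\log(T-t)-1)$ is bounded on $[0,T]$. If $Q$ attained a minimum at some $(x,t)$ with $t>0$, then there $\ddbar\varphi \ge 0$, hence
$$\dot\varphi = \log\frac{(\hat\omega_t + \ddbar\varphi)^n}{\Omega} \ge \log\frac{\hat\omega_t^n}{\Omega} \ge n\log(T-t) + \log c_0,$$
and since $\ddt{}\bigl[n(T-t)(\log(T-t)-1)\bigr] = -n\log(T-t)$, this gives $0 \ge \ddt{}Q \ge 1$, a contradiction. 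So the minimum of $Q$ occurs at $t=0$, where $Q = n T(\log T - 1)$, and the lower bound on $\varphi$ follows. This is the Tian--Zhang barrier argument (\cite{TZ}, cf.\ \cite{SW4}), it uses nothing beyond assumption (\ref{nn}), and it is what the paper does.
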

\begin{proof}
The proof is exactly the same as in the K\"ahler case.  Briefly:  the upper bound of $\varphi$ follows from the same argument as in Lemma \ref{lemma:Testimates}.  For the lower bound of $\varphi$ observe that $\hat{\omega}_t = \frac{(T-t)}{T} \omega_0 + \frac{t}{T} \hat{\omega}_{T} \ge \frac{(T -t)}{T} \omega_0$ and hence $\hat{\omega}_t^n \ge c_0 (T-t)^n \Omega$ for a uniform $c_0>0$.  The lower bound of $\varphi$ follows from applying the maximum principle to the quantity
$$Q = \varphi + n(T-t) (\log (T-t)-1) - (\log c_0 -1)t.$$
Indeed if $Q$ achieves its mimimum at some point $(x,t)$ with $t>0$ then at $(x,t)$ we have $\ddbar \vp\geq 0$ and
$$0\geq \ddt{}Q\geq \log\frac{\hat{\omega}_t^n}{\Omega}-n\log(T-t)-\log c_0+1\geq 1,$$
a contradiction. Hence $Q$ achieves its minimum at time $t=0$ which gives the lower bound for $\vp$.

The upper bound of $\dot{\varphi}$ follows from the same argument as in Lemma \ref{lemma:Testimates}.
\end{proof}

Next we prove a parabolic Schwarz lemma for volume forms for the flow (\ref{crf}).  It holds in the special case that $\hat{\omega}_{T}$ is the pull-back of a metric from another Hermitian manifold via a holomorphic map.  In the K\"ahler case this is due to Song-Tian \cite{ST}, and is a parabolic version of Yau's (volume) Schwarz lemma \cite{Y2}.

\begin{proposition} \label{prop:schwarz}  Let $\omega=\omega(t)$ solve (\ref{crf}) on $[0,T)$ with $T<\infty$. Suppose we have a holomorphic map $\pi :M \to N$ for $N$ a compact Hermitian manifold  of the same dimension $n$, equipped with a Hermitian metric $\omega_N$.  Suppose that $\hat{\omega}_{T} = \pi^* \omega_N$.
Then there exists a uniform constant $C>0$ such that on $M \times [0,T)$,
\begin{equation} \nonumber
\omega^n \ge \frac{1}{C}  (\pi^*\omega_N)^n.
\end{equation}
\end{proposition}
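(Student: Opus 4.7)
The plan is to apply the maximum principle to the quantity $Q = v - A\vp$, where
\[
v := \log\frac{(\pi^*\omega_N)^n}{\omega^n}
\]
and $A$ is a large constant to be chosen. The desired bound $\omega^n \geq (\pi^*\omega_N)^n / C$ is equivalent to a uniform upper bound on $v$, which differs from $Q$ by the uniformly bounded quantity $A\vp$. First I would derive the clean evolution
\[
\left(\ddt - \Delta\right) v = \tr{\omega}{\pi^*\Ric(\omega_N)}.
\]
This is the main reason to formulate the Schwarz lemma in terms of the volume form rather than the trace $\tr{\omega}{\pi^*\omega_N}$: the identities $\ddt \log\det g = -\tr{\omega}{\Ric(\omega)}$ and $\Delta \log\det g = -\tr{\omega}{\Ric(\omega)}$ are torsion-free because $\log\det g$ is a scalar, whereas a Schwarz-type computation for the trace would pick up correction terms from the torsion of the (non-K\"ahler) evolving metric.

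The crucial observation is that $(\pi^*\omega_N)^n$ vanishes along the exceptional divisor $E$ while $\omega^n$ remains strictly positive, so $v \to -\infty$ along $E$. Combined with the uniform $C^0$ bound on $\vp$ from Proposition \ref{prop:nn}(i), this forces $Q \to -\infty$ on $E$. Hence for each $T' < T$, the supremum of $Q$ on $M \times [0, T']$ is attained either at $t = 0$, where $v(\cdot, 0) = \log\frac{(\pi^*\omega_N)^n}{\omega_0^n}$ is bounded above by compactness, or at some interior point $(p^*, t^*)$ with $p^* \in M \setminus E$, where $(\ddt - \Delta)Q \geq 0$.

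At such an interior maximum, the evolution gives
\[
0 \leq \tr{\omega}{\pi^*\Ric(\omega_N)} - A\dot\vp + An - A\,\tr{\omega}{\hat\omega_t}.
\]
Using the compactness bound $\pi^*\Ric(\omega_N) \leq C_0\,\pi^*\omega_N$, the decomposition $\hat\omega_t = \tfrac{T-t}{T}\omega_0 + \tfrac{t}{T}\pi^*\omega_N$, the upper bound $\dot\vp \leq C$ from Proposition \ref{prop:nn}(ii), and the AM-GM inequality $\tr{\omega}{\pi^*\omega_N} \geq n e^{v/n}$ (valid on $M \setminus E$), I would choose $A$ of order $C_0$ so that, for $t^* \geq T/2$, the negative contribution $-A\tfrac{t^*}{T}\tr{\omega}{\pi^*\omega_N}$ dominates the Ricci term, and the surviving inequality together with AM-GM forces $v(p^*, t^*)$ to be bounded. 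For $t^* \leq T/2$, I would instead invoke Lemma \ref{lemma:Testimates} applied with $T_0 = T/2 < T$, which gives uniform two-sided bounds on $\omega(t)$ and hence on $v$ on this subinterval. The principal obstacle is that only an upper bound on $\dot\vp$ is a priori available, so in principle the maximum of $Q$ could migrate toward $E$ as $t \to T$; this is resolved by the interplay between the blow-up of $v$ along $E$ (which confines the maximum off $E$ at every time) and the AM-GM trapping (which converts a large value of $v$ at $p^*$ into a large value of $\tr{\omega}{\pi^*\omega_N}$, absorbed by $-A\,\tr{\omega}{\hat\omega_t}$ through our choice of $A$).
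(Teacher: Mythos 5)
Your overall strategy is the paper's: the same quantity $v=\log\bigl((\pi^*\omega_N)^n/\omega^n\bigr)$, the same clean evolution $(\ddt{}-\Delta)v=\tr{\omega}{\pi^*\Ric(\omega_N)}$ (and your remark about why the volume-form version avoids torsion terms is correct), a maximum principle for $v-A\vp$, AM--GM, and absorption of the Ricci term by $A\,\tr{\omega}{\hat{\omega}_t}$. The gap is in the one term none of this touches: $-A\dot{\vp}$. At your interior maximum the inequality is
\begin{equation} \nonumber
0\leq \tr{\omega}{\pi^*\Ric(\omega_N)}-A\dot{\vp}+An-A\,\tr{\omega}{\hat{\omega}_t},
\end{equation}
so after absorbing the Ricci term you need $-A\dot{\vp}$ bounded \emph{above}, i.e.\ a \emph{lower} bound on $\dot{\vp}$. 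Proposition \ref{prop:nn}(ii) gives only the upper bound $\dot{\vp}\leq C$, which enters with the wrong sign, and a uniform lower bound is genuinely false here: $\dot{\vp}=\log(\omega^n/\Omega)$, and in the blow-down situation the volume collapses along $E$ as $t\to T$, so $\dot{\vp}\to-\infty$ there. The ``interplay'' you invoke does not repair this: the maximum of $Q$ can sit arbitrarily close to $E$, where the good term you rely on, $-A\,\tr{\omega}{\pi^*\omega_N}\leq -An\bigl((\pi^*\omega_N)^n/\omega^n\bigr)^{1/n}$, is deflated by the vanishing of $(\pi^*\omega_N)^n$, while the bad term $-A\dot{\vp}=A\log(\Omega/\omega^n)$ is not; indeed, large $v$ at the maximum point pushes $\omega^n$ to be small against $(\pi^*\omega_N)^n\leq C\Omega$, i.e.\ pushes $\dot{\vp}$ to be very negative exactly where you need it bounded below. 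So the inequality cannot be closed as written.

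The paper's proof handles precisely this point by two moves you are missing. First, it never estimates $\dot{\vp}$ at all: it substitutes the Monge--Amp\`ere equation $\dot{\vp}=\log(\omega^n/\Omega)$. Second, it adds to $Q$ the bounded time-dependent term $-An(T-t)(\log(T-t)-1)$, whose time derivative $An\log(T-t)$ converts the substituted term into $A\log\frac{(T-t)^n\Omega}{\omega^n}$. It then reserves one copy of
\begin{equation} \nonumber
-\tr{\omega}{\hat{\omega}_t}\leq -\frac{(T-t)}{T}\tr{\omega}{\omega_0}\leq -c\left(\frac{(T-t)^n\Omega}{\omega^n}\right)^{1/n},
\end{equation}
where, crucially, the AM--GM is taken against the $\frac{T-t}{T}\omega_0$ part of $\hat{\omega}_t$, so the argument of the power involves $\Omega$ rather than the degenerate $(\pi^*\omega_N)^n$, and concludes with the elementary fact that $y\mapsto A\log y-cy^{1/n}$ is bounded above uniformly in $y>0$. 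The $(T-t)^n$ insertion is exactly what matches the logarithm to the power term; without it (as in your version) the same argument only yields $v\leq C-An\log(T-t^*)$ at a maximum time $t^*$, which is not uniform as $t^*\to T$. Your splitting at $t=T/2$ and the use of Lemma \ref{lemma:Testimates} on $[0,T/2]$ are fine but do not address this, since the problem occurs precisely in the regime $t^*\to T$.
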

\begin{proof} Call
$$u=\frac{(\pi^*\omega_N)^n}{\omega^n}$$
the ratio of the volume forms. At any point where $u>0$ we
can calculate
$$\left(\frac{\de}{\de t}-\Delta\right)\log u=\tr{\omega}{\pi^*\Ric(\omega_N)}- \tr{\omega}{\Ric(\omega)}-g^{\ov{j}i} \frac{\de}{\de t}g_{i\ov{j}}=\tr{\omega}{\pi^*\Ric(\omega_N)}.$$
We apply the maximum principle to
$$Q = \log u - A \varphi - An(T-t) (\log (T-t)-1),$$
for a constant $A$ to be determined (cf. \cite[Lemma 7.3]{SW4}).
The maximum of $Q$ is achieved at a point where $u>0$, so we compute
\begin{align*}
\left( \ddt{} - \Delta\right) Q & = \tr{\omega}{\pi^*\Ric(\omega_N)} - A \dot{\varphi} + An \log(T-t) + A\tr{\omega}{(\omega-\hat{\omega}_t)} \\
& = - \tr{\omega}{ \left((A-1) \hat{\omega}_t  -\pi^*\Ric(\omega_N) \right)  } - A \log \frac{\omega^n}{\Omega (T-t)^n}\\
&\ \ \ \ \!  - \tr{\omega}{\hat{\omega}_t} + An.
\end{align*}
Choose $A$ sufficiently large so that for all $t\in [0,T]$,
$$(A-1) \hat{\omega}_t - \pi^*\Ric(\omega_N) = \frac{(A-1)}{T} \left( (T -t) \omega_0 + t \pi^* \omega_N \right) - \pi^*\Ric(\omega_N) \ge \pi^* \omega_N.$$
Note that by the arithmetic-geometric means inequality,
$$\tr{\omega}{\hat{\omega}_t} \ge \frac{(T-t)}{T} \tr{\omega}{\omega_0} \ge c \left( \frac{(T-t)^n \Omega}{\omega^n} \right)^{1/n},$$
for a uniform $c>0$.  Then
\begin{align*}
\left( \ddt{} - \Delta\right) Q&  \le - \tr{\omega}{\pi^* \omega_N} + A \log \frac{ (T-t)^n \Omega}{\omega^n} - c \left( \frac{(T-t)^n \Omega}{\omega^n} \right)^{1/n} + An \\
& \le - \tr{\omega}{\pi^* \omega_N} +C,
\end{align*}
using the fact that the map $x \mapsto A \log x - c x^{1/n}$ is uniformly bounded from above for $x>0$.  Thus at a maximum point of $Q$ we have
$\tr{\omega}{\pi^* \omega_N} \le C$, and applying the arithmetic-geometric means inequality again, $u$ is uniformly bounded from above at this point.  Since $\varphi$ is uniformly bounded by Proposition \ref{prop:nn}, this implies that $Q$ is bounded from above, and hence so is $u$.
\end{proof}

Now assume that we are in the situation of Theorem \ref{theorem:blowdown}.  The map $\pi: M \rightarrow N$ is a holomorphic map blowing down an exceptional divisor $E$ to a point $p\in N$.  More explicitly, a neighborhood of $E$ in $M$ can be identified with
$$\tilde{B} = \{ (z, \ell) \in B \times \mathbb{P}^{n-1} \ | \ z \in \ell \},$$
where $B$ is the open unit ball in $\mathbb{C}^n$ and elements $\ell$ in $\mathbb{P}^{n-1}$ are identified with lines through the origin in $\mathbb{C}^n$.  The map $\pi$ on $\tilde{B}$ is identified with the projection $(z, \ell) \mapsto z \in B$  and the exceptional divisor $E \subset M$ with the set $\pi^{-1}(0) \subset \tilde{B}$.   $\pi$ is a biholomorphism from $M \setminus E$ to $N \setminus \{p \}$.

By assumption, there exists a function $\psi=f_{T}$ with
$$\hat{\omega}_{T}:= \alpha_T + \ddbar f_{T} = \pi^*\omega_N.$$
Now  there exists a Hermitian metric $h$ on the fibers of the line bundle $[E]$ associated to the divisor $E$ with the property that for $\ve>0$ sufficiently small,
\begin{equation} \label{localhermitian}
\pi^* \omega_N - \ve R_h >0, \quad \textrm{with } R_h = - \ddbar \log h.
\end{equation}
For a proof of this statement, see \cite[p. 187]{GH}.  Although it is stated there in the K\"ahler case, the same proof carries over with $\omega_N$ Hermitian.
 Fix $s$ a holomorphic section of $[E]$ vanishing along $E$ to order 1.

We have a lemma:

\begin{lemma}\label{away}
With these hypotheses, there exists $A>0$ and $C$ such that
$$\emph{tr}_{g_0} \, g \le \frac{C}{|s|_h^{2A}}.$$
\end{lemma}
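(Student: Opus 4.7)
The plan is to apply the maximum principle to a Phong--Sturm-type barrier that has been augmented with a logarithmic singularity along $E$. Specifically, consider
$$Q \;=\; \log \tr{g_0}{g} \;+\; A \log |s|_h^2 \;-\; B \varphi \;+\; \frac{1}{\varphi + \tilde{C}},$$
on $M \times [0, T_0]$ for any $T_0 < T$, where $A > 0$ is a small constant (which will be the $A$ of the lemma), $B > 0$ is a large constant to be determined, and $\tilde C$ is chosen by Proposition \ref{prop:nn}(i) so that $\varphi + \tilde C \ge 1$; we also rescale $h$ so that $|s|_h \le 1$ on $M$. Because $|s|_h$ vanishes along $E$, the term $A \log|s|_h^2$ drives $Q \to -\infty$ on $E$, and the supremum of $Q$ is attained at some point $(x_0, t_0)$ with $x_0 \in M \setminus E$; we may assume $t_0 > 0$.

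At such a maximum, $(\partial_t - \Delta) Q \ge 0$ and $\partial_i Q = 0$. The evolution of $\log \tr{g_0}{g}$ is estimated by Proposition \ref{prop:bigcalc} applied with $\hat g = g_0$: the terms $(II)$ and $(III)$ contribute at most $C\tr{g}{g_0}$, and the torsion--gradient term from $(I)$ is absorbed by the Phong--Sturm correction $\frac{1}{\varphi+\tilde C}$ via exactly the computation performed in the proof of Lemma \ref{lemma:Testimates}(iii), using $\partial_i Q = 0$. Since $s$ is holomorphic, $\partial\overline{\partial}\log|s|_h^2 = -R_h$, so $(\partial_t - \Delta)(A\log|s|_h^2) = A \tr{g}{R_h}$, and by the positivity in (\ref{localhermitian}) this is at most $(A/\varepsilon)\tr{g}{\pi^*\omega_N}$. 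Finally, $(\partial_t - \Delta)(-B\varphi) = -B\dot\varphi + Bn - B\tr{g}{\hat\omega_t}$, where $\dot\varphi \le C$ by Proposition \ref{prop:nn}(ii) and the required lower bound $\dot\varphi \ge (n-1)\log|s|_h^2 - C'$ follows from the parabolic Schwarz lemma (Proposition \ref{prop:schwarz}), using that $(\pi^*\omega_N)^n$ vanishes to order $2(n-1)$ along $E$.

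The algebraic core of the argument is the decomposition $\hat\omega_t = \tfrac{T-t}{T}\omega_0 + \tfrac{t}{T}\pi^*\omega_N$, which gives
$$\tfrac{A}{\varepsilon}\tr{g}{\pi^*\omega_N} \;-\; B \tr{g}{\hat\omega_t} \;=\; \Bigl(\tfrac{A}{\varepsilon} - \tfrac{Bt}{T}\Bigr)\tr{g}{\pi^*\omega_N} \;-\; \tfrac{B(T-t)}{T}\tr{g}{g_0}.$$
Choosing $A = B\varepsilon/4$ makes the coefficient of $\tr{g}{\pi^*\omega_N}$ strictly negative for $t \ge T/2$, while for $t \in [0, T/2]$ we have $\hat\omega_t \ge \tfrac{1}{2}\omega_0$ and the Phong--Sturm absorption of Lemma \ref{lemma:Testimates}(iii) applies directly. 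Taking $B$ large enough to dominate $C\tr{g}{g_0}$ and the remaining contributions (including those coming from the lower bound on $\dot\varphi$, which are also of $\log|s|_h^2$ type and absorbed by adjusting $A$), we conclude that $\tr{g_0}{g}$ is uniformly bounded at $(x_0, t_0)$, hence $Q \le C$ on $M \times [0, T_0]$, which is equivalent to the claimed estimate.

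The main obstacle, compared with Lemma \ref{lemma:Testimates}(iii), is that $\hat\omega_t$ degenerates along $E$ as $t \to T$, so $-B\tr{g}{\hat\omega_t}$ alone no longer dominates $C\tr{g}{g_0}$ at late times; the positivity (\ref{localhermitian}) together with the logarithmic barrier $A\log|s|_h^2$ is precisely what supplies the missing negative contribution in the $\pi^*\omega_N$ direction. A secondary delicacy is tracking the lower bound on $\dot\varphi$, which itself degenerates along $E$ but only logarithmically and hence can be absorbed by the barrier.
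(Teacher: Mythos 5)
Your overall strategy (a Phong--Sturm quantity plus a logarithmic barrier along $E$) is the same as the paper's, but the way you split the barrier breaks precisely the step that makes the argument work. The paper fuses the two pieces into Tsuji's function $\tilde{\vp} = \vp - \ve_0\log|s|_h^2$ and, crucially, uses $1/(\tilde{\vp}+C_0)$ --- not $1/(\vp+\tilde{C})$ --- as the Phong--Sturm correction. The reason is visible at the critical point: with your $Q$, the equation $\partial_i Q = 0$ gives
\[
\frac{\partial_i \tr{g_0}{g}}{\tr{g_0}{g}} \;=\; -A\,\partial_i\log|s|_h^2 \;+\; \Bigl(B + \frac{1}{(\vp+\tilde{C})^2}\Bigr)\partial_i\vp,
\]
so when you substitute this into the torsion term of Proposition \ref{prop:bigcalc} you pick up, besides the $\partial\vp$ contributions handled in Lemma \ref{lemma:Testimates}(iii), an extra term
\[
-\frac{2A}{\tr{g_0}{g}}\,\mathrm{Re}\Bigl(g^{\ov{\ell}k}(T_0)^p_{kp}\,\partial_{\ov{\ell}}\log|s|_h^2\Bigr),
\]
whose size near $E$ is of order $|s|_h^{-1}$ (not logarithmic), and for which nothing in your inequality supplies a matching negative term: the correction $1/(\vp+\tilde{C})$ only produces $-2|\partial\vp|^2_g/(\vp+\tilde{C})^3$, and $-B\tr{g}{\hat{\omega}_t}$ carries no factor of $|s|_h^{-1}$. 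Your claim that the absorption works ``via exactly the computation performed in the proof of Lemma \ref{lemma:Testimates}(iii)'' silently drops this term, and since the maximum point can be arbitrarily close to $E$, the argument as written does not close. The paper's choice kills the problem automatically: with $\tilde{\vp}$ appearing both in the barrier $-A\tilde{\vp}$ and in the correction $1/(\tilde{\vp}+C_0)$, the critical-point identity involves only $\partial_i\tilde{\vp}$, and the resulting $|\partial\tilde{\vp}|^2_g/(\tilde{\vp}+C_0)^3$ is absorbed by the $-2|\partial\tilde{\vp}|^2_g/(\tilde{\vp}+C_0)^3$ coming from $\Delta\bigl(1/(\tilde{\vp}+C_0)\bigr)$.

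A second, less fatal, problem is your treatment of $-B\dot{\vp}$. You invoke the volume Schwarz lemma (Proposition \ref{prop:schwarz}) to get $\dot{\vp} \ge (n-1)\log|s|_h^2 - C'$ and assert the resulting unbounded positive term is ``absorbed by adjusting $A$''; but the barrier enters the evolution inequality only through $(\partial_t - \Delta)(A\log|s|_h^2) = A\tr{g}{R_h}$, so there is no $-\log|s|_h^2$ term in the differential inequality available to absorb it. (This particular step could be repaired: the inequality would yield $\tr{g}{g_0} \le C(1+|\log|s|_h^2|)$ at the maximum, which after converting to $\tr{g_0}{g}$ gives a $\log\log$ contribution to $Q$ that is beaten by $A\log|s|_h^2$ --- but you do not carry this out, and it is not what ``adjusting $A$'' does.) The paper avoids the issue entirely and never needs a lower bound for $\dot{\vp}$: it keeps the term as $A\log(\Omega/\omega^n)$, converts the resulting bound on $\tr{g}{g_0}$ into one on $\tr{g_0}{g}$ via $\tr{g_0}{g} \le \frac{1}{(n-1)!}(\tr{g}{g_0})^{n-1}\det g/\det g_0$, and then uses only the upper bound $\omega^n/\Omega \le C$ from Proposition \ref{prop:nn}(ii) together with the elementary fact that $x \mapsto x|\log x|^{n-1}$ is bounded near $0$. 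The Schwarz lemma is reserved for the proof of Theorem \ref{theorem:blowdown} itself, where a lower bound on $\omega$ away from $E$ is needed.
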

\begin{proof}   Define, as in Tsuji's work \cite{Ts}, $\tilde{\varphi} = \varphi - \ve_0 \log |s|_h^2$, which is uniformly bounded from below and goes to infinity on $E$. Here $\ve_0>0$ is a small constant that will be specified below.  Choose a constant $C_0$ so that $\tilde{\varphi} +C_0 \ge 1$.  Following Phong-Sturm \cite{PS} (and as  in Lemma \ref{lemma:Testimates} above), we compute the evolution of
$$Q = \log \tr{g_0}{g} - A \tilde{\varphi} + \frac{1}{\tilde{\varphi}+C_0},$$
for $A$ to be determined (assume at least $A \ve_0 >1)$.  Note that the quantity $1/(\tilde{\varphi}+C_0)$ is bounded (in fact it lies between 0 and 1).  Moreover, $Q$ tends to negative infinity on $E$ and hence for each fixed time $t$, the quantity $Q(x,t)$ achieves a maximum at some point in $M \setminus E$.

From Proposition \ref{prop:bigcalc} we have
\begin{align} \label{eqnfrombigcalc}
\left( \ddt{} - \Delta \right) \log \tr{g_0}{g} & \le  \frac{2}{(\tr{g_0}{g})^2} \textrm{Re} \left( g^{\ov{\ell}k}  (T_0)^p_{kp}  \partial_{\ov{\ell}} \tr{g_0}{g} \right) + C \tr{g}{g_0},
\end{align}
assuming we are calculating at a point with $\tr{g_0}{g} \ge 1$.
To bound the first term on the right hand side, we note that at a maximum point of $Q$ we have $\partial_i Q=0$ and hence
\begin{equation} \nonumber
\frac{1}{\tr{g_0}{g}} \partial_i \tr{g_0}{g} - A \partial_i \tilde{\varphi} - \frac{1}{(\tilde{\varphi}+C_0)^2} \partial_i \tilde{\varphi}=0.
\end{equation}
Thus at this maximum point for $Q$,
\begin{align} \nonumber \lefteqn{
\left| \frac{2}{(\tr{g_0}{g})^2} \textrm{Re} \left( g^{\ov{\ell}k}  (T_0)^p_{kp}  \partial_{\ov{\ell}} \tr{g_0}{g} \right) \right| } \\ \nonumber
& \le \left| \frac{2}{\tr{g_0}{g}} \textrm{Re} \left( \left(A+ \frac{1}{(\tilde{\varphi}+C_0)^2}\right) g^{\ov{\ell} k} (T_0)^p_{kp} (\partial_{\ov{\ell}} \tilde{\varphi} )  \right) \right|  \\ \nonumber
& \le \frac{ | \partial \tilde{\varphi}|^2_{g}}{(\tilde{\varphi} +C_0)^3} + C A^2 (\tilde{\varphi}+C_0)^3 \frac{\tr{g}{g_0}}{(\tr{g_0}{g})^2},
\end{align}
for a uniform constant $C$.
If at the maximum of $Q$ we have $(\tr{g_0}{g})^2\leq A^2(\tilde{\varphi}+C_0)^3$ then at
the same point we have
$$ (\tr{g_0}{g}) |s|_h^{2A\ve_0}\leq A(\varphi-\ve_0\log|s|^2_h+C_0)^{\frac{3}{2}}|s|_h^{2A\ve_0}\leq C_A,$$
for a constant $C_A$ depending on $A$, and so
$$Q = \log(( \tr{g_0}{g}) |s|_h^{2A\ve_0}) - A \varphi + \frac{1}{\tilde{\varphi}+C_0} \le C'_A,$$
and we are done.  If on the other hand at the maximum of $Q$ we have $A^2(\tilde{\varphi}+C_0)^3 \le (\tr{g_0}{g})^2$ then
\begin{align} \nonumber
\left| \frac{2}{(\tr{g_0}{g})^2} \textrm{Re} \left( g^{\ov{\ell}k}  (T_0)^p_{kp}  \nabla_{\ov{\ell}} \tr{g_0}{g} \right) \right|
& \le \frac{ | \partial \tilde{\varphi}|^2_{g}}{(\tilde{\varphi} +C_0)^3}  + C \tr{g}{g_0}.
\end{align}
Now compute at the maximum of $Q$, using (\ref{eqnfrombigcalc})
\begin{align} \nonumber
0 \le \left( \ddt{} - \Delta \right) Q & \le \frac{ | \partial \tilde{\varphi}|^2_{g}}{(\tilde{\varphi} +C_0)^3}  +C \tr{g}{g_0} - \left(A +\frac{1}{(\tilde{\varphi} +C_0)^2}\right) \dot{\varphi}  \\ \nonumber
& \ \ \  + \left( A + \frac{1}{(\tilde{\varphi}+C_0)^2} \right)\tr{\omega}{(\omega - (\hat{\omega}_t - \ve_0 R_h))}\\
&\ \ \  - \frac{2}{ (\tilde{\varphi}+C_0)^3} | \partial \tilde{\varphi}|^2_g.
\end{align}
Since we clearly have $\hat{\omega}_t\geq c \hat{\omega}_T$ for some constant $c>0$,
we can use (\ref{localhermitian}) to get that $\hat{\omega}_t - \ve_0 R_h \ge c_0 \omega_0$ for some uniform $c_0>0$,
provided we choose $\ve_0$ sufficiently small.  Hence we may
choose $A$ sufficiently large so that
$$\tr{g}{g_0} \le C \log \frac{\Omega}{\omega^n} +C.$$
Hence at the maximum of $Q$,
$$\tr{g_0}{g} \le \frac{1}{(n-1)!} (\tr{g}{g_0})^{n-1} \frac{\det g}{\det g_0} \le
C\frac{\omega^n}{\Omega} \left(\log \frac{\Omega}{\omega^n}\right)^{n-1}+C\leq C',$$
because we know that $\frac{\omega^n}{\Omega}\leq C$ and $x\mapsto x|\log x|^{n-1}$
is bounded above for $x$ close to zero. This implies that $Q$ is bounded from above at its maximum, and completes the proof of the lemma.
\end{proof}

It is now straightforward to complete the proof of the theorem.

\begin{proof}[Proof of Theorem \ref{theorem:blowdown}]
We apply Proposition \ref{prop:schwarz} and Lemma \ref{away} to see that for any compact set $K\subset M \setminus E$, there exists a constant $C_K>0$ such that
$$C_K^{-1} \omega_0 \le \omega(t) \le C_K\omega_0 \quad \textrm{on } K \times [0,T).$$
Applying the higher order estimates of Gill \cite{G}, which are local, we obtain uniform $C^{\infty}$ estimates for $\omega(t)$ on compact subsets of $M \setminus E$.  In particular,
for every compact set $K$ there exists a constant $C'_K$ such that
$$\ddt{} \omega = - \Ric(\omega) \le C'_K \omega,$$
which implies that $e^{-C'_Kt} \omega(t)$ is decreasing in $t$ as well as being bounded from below.  This implies that a limit for $\omega(t)$ exists as $t \rightarrow T$, and since we have uniform estimates away from $E$, we see that $\omega(t)$ converges in $C^{\infty}$ on compact subsets to a smooth Hermitian metric $\omega_{T}$ on $M \setminus E$.
\end{proof}

\section{The Chern-Ricci flow on complex surfaces} \label{section:surfaces}

In this section we give a proof of Theorem \ref{theorem:surfaces2}, and we pose some conjectures on the behavior of the Chern-Ricci flow on surfaces, and its relation to the `minimal model program for complex surfaces'.

First recall that the Kodaira dimension of a compact complex manifold $M$ of dimension $n$ is given by
$$\kappa(M)=\limsup_{\ell\to +\infty}\frac{\log \dim H^0(M,\ell K_M)}{\log \ell}\in \{-\infty,0,1,\dots,n\}.$$

\begin{proof}[Proof of Theorem \ref{theorem:surfaces2}]
(a) We need to show that if $T=\infty$ then $M$ is minimal. If $M$ is a non-minimal compact complex surface then we must have $T<\infty$, because if $D$ is any $(-1)$-curve in $M$ we have that $D\cdot K_M<0$ and so the volume of $D$,
$$\int_D\omega(t)=\int_D\omega_0 +2\pi t D\cdot K_M,$$
becomes zero in finite time.

(b) If the volume goes to zero at time $T<\infty$, we have that $\int_M \alpha_T^2=0$,
where we recall from (\ref{alphat}) that $\alpha_T$ is the $\partial \ov{\partial}$-closed $(1,1)$ form $\alpha_T=\omega_0-T \Ric(\omega_0)$.
 We claim that in this case the Kodaira dimension $\kappa(M)$ is negative, which by the Kodaira-Enriques classification \cite{bhpv, Be} implies that $M$ is either birational to a ruled surface or of class VII. Indeed, if $\kappa(M)\geq 0$ then
some power $\ell K_M$, $\ell\geq 1$ of the canonical bundle would be effective. Let $E$ be an effective divisor in $|\ell K_M|$, then $E$ must be nonempty since otherwise $\ell K_M$ would be trivial, and so $c_1^{\mathrm{BC}}(M)=0$ and by Theorem \ref{theorem:gill} we would have $T=\infty$.
We thus conclude that $E$ is nonempty, and let
$s$ be a section of $\ell K_M$ defining $E$ and $h$ a smooth metric on the fibers of $\ell K_M$.
Then by the Poincar\'e-Lelong formula the curvature $\eta$ of $h$ is a smooth closed real $(1,1)$ form such
that
$$2\pi [E]=\eta+\ddbar \log |s|^2_h$$
holds as currents on $M$ (see also \cite{Ga2}). In particular for any $\de\db$-closed smooth $(1,1)$ form $\gamma$ we have
$$\int_M \gamma\wedge\eta=2\pi\int_E\gamma.$$
Moreover if $\gamma$ is a Gauduchon metric we have
$$\int_M \gamma\wedge c_1^{\mathrm{BC}}(M)=-\frac{2\pi}{\ell}\int_E\gamma<0,$$
because $-\frac{1}{\ell}\eta$ represents $c_1^{\mathrm{BC}}(M)$.
Taking $\gamma=\omega(t)$ and letting $t$ approach $T$ we get
$$\int_M \alpha_T\wedge c_1^{\mathrm{BC}}(M)\leq 0.$$
We also have
\[\begin{split}
0=\int_M \alpha_T^2&=-T\int_M \alpha_T\wedge c_1^{\mathrm{BC}}(M) +\int_M \alpha_T\wedge\omega_0\\
&\geq \int_M \alpha_T\wedge\omega_0\geq 0,
\end{split}\]
which implies that $\int_M \alpha_T\wedge\omega_0=0$. Applying \cite[Lemma 4]{Bu1} we see that
$\alpha_T=\ddbar f$ for some smooth function $f$, which implies that $\omega_0$ is K\"ahler
and that $M$ is Fano, contradicting the assumption that the Kodaira dimension of $M$ is nonnegative.
To see that $M$ cannot be an Inoue surface, we apply the observation (below) that on an Inoue surface, the Chern-Ricci flow exists for all time.

(c) Assume now that $T<\infty$ and that the volume does not collapse at time $T$, so that $\int_M \alpha_T^2>0$.
We know from Theorem \ref{theorem:maximal} that there is no smooth function $f$ such that $\alpha_T+\ddbar f>0$ (otherwise we could continue the flow past $T$). On the other hand for $\ve>0$
$$\alpha_T+\ve\omega_0=(1+\ve)\omega_0-T\Ric(\omega_0)=(1+\ve)\left( \omega_0-\frac{T}{1+\ve}\Ric(\omega_0)\right),$$
and since $\frac{T}{1+\ve}<T$ we have $\omega_0-\frac{T}{1+\ve}\Ric(\omega_0)+\ddbar f>0$ for some function $f$.
Therefore
$$\int_M \omega_0\wedge(\alpha_T+\ve\omega_0)>0,$$
and letting $\ve\to 0$ we get $\int_M\alpha_T\wedge\omega_0\geq 0$, therefore
$$\int_M \alpha_T\wedge (\alpha_T+\ve\omega_0)=\int_M \alpha_T^2+\ve\int_M\alpha_T\wedge\omega_0>0.$$
We now apply the main theorem of \cite{Bu2}, and we see that there is an irreducible effective divisor $D\subset M$
with $D^2<0$ such that $\int_D\alpha_T=0$. Furthermore
\begin{equation}\label{dotneg}
2\pi K_M\cdot D=-\int_D\Ric(\omega_0)=\frac{1}{T}\int_D (\alpha_T-\omega_0)
=-\frac{1}{T}\int_D\omega_0<0,
\end{equation}
and so by the adjunction formula $D$ is a smooth $(-1)$-curve.

Assume now that $M$ is minimal and consider first the case when $M$ is K\"ahler. If $\kappa(M)\geq 0$ then $K_M$ is nef thanks to \cite[Corollary III.2.4]{bhpv}, while if $\kappa(M)=-\infty$ then by the Kodaira-Enriques classification $M$ is either $\mathbb{CP}^2$ or ruled. If $K_M$ is nef then $\int_M c_1^{\mathrm{BC}}(M)^2\geq 0$ and if $\kappa(M)\geq 0$ then some power of $K_M$ is effective, so the argument above implies that $\int_M\omega_0 \wedge c_1^{\mathrm{BC}}(M)<0$. Thus the volume along the flow is
$$V_t=\int_M \omega(t)^2=\int_M\omega_0^2-2t\int_M \omega_0\wedge c_1^{\mathrm{BC}}(M)+t^2\int_M c_1^{\mathrm{BC}}(M)^2,$$
which is always positive, so by Theorem \ref{theorem:surfaces} we have $T=\infty$.
On the other hand if $M$ is $\mathbb{CP}^2$ or ruled then we must have $T<\infty$ and since case (c) is excluded we must be in case (b). Indeed, if $M$ is $\mathbb{CP}^2$ and $E$ is a line inside it
then $E\cdot K_M<0$ and its volume
$$\int_E \omega(t)=\int_E\omega_0+2\pi t E\cdot K_M$$
goes to zero in finite time. The other case is when $M$ is ruled and $E\cong\mathbb{CP}^1$ is a fiber of the
ruling then $E\cdot E=0$ and by the genus formula $E\cdot K_M=-2$, which again implies that the volume of $E$
goes to zero in finite time.

If on the other hand $M$ is not K\"ahler, thanks to the Kodaira-Enriques classification \cite{bhpv, Be} we know that minimal non-K\"ahler compact complex surfaces fall into the following classes:
\begin{enumerate}
\item Primary and secondary Kodaira surfaces,
\item Surfaces of class VII with $b_2(M)=0,$
\item Minimal surfaces of class VII with $b_2(M)>0,$
\item Minimal properly elliptic surfaces,
\end{enumerate}
where a surface of class VII is by definition a compact complex surface with $b_1(M)=1$ and $\kappa(M)=-\infty$, while a properly elliptic surface is an elliptic surface with $\kappa(M)=1$.
The surfaces in $(1)$, $(2)$ and $(4)$ are completely classified, and while there are many examples of surfaces
in $(3)$, a complete classification is still lacking (see e.g. \cite{bhpv, DOT, LYZ, N, T0, T1, T3}). We treat
each case separately.

In case $(1)$ the manifold $M$ has torsion canonical bundle (i.e. some power $\ell K_M$, $\ell\geq 1$ is holomorphically trivial). In particular these manifolds have $c_1^{\mathrm{BC}}(M)=0$, and Theorem \ref{theorem:gill} says that the Chern-Ricci flow starting from any initial Hermitian metric $\omega_0$ has a long time solution $\omega(t)$ (so we are in case (a)) which as $t$ goes to infinity converges smoothly to the unique Hermitian metric of the form $\omega_\infty=\omega_0+\ddbar\vp_\infty$ with $\Ric(\omega_\infty)=0$.

In case $(2)$, the manifold $M$ is either an Inoue surface or a Hopf surface \cite{LYZ, T0}.  Suppose first that $M$ is an Inoue surface.  Then  $M$ does not have any curves and by \cite[Remark 4.2]{T2} any Gauduchon metric $\omega_0$  satisfies
$\int_M\omega_0\wedge c_1^{\mathrm{BC}}(M)<0$. In particular the volume of $M$ along the flow is
$$V_t=\int_M \omega(t)^2=\int_M\omega_0^2-2t\int_M \omega_0\wedge c_1^{\mathrm{BC}}(M).$$
Since $V_t$ is always positive, Theorem \ref{theorem:surfaces} implies that the Chern-Ricci flow exists for
all positive time, so we are in case (a).

If $M$ is a Hopf surface, it follows from the arguments in \cite[Remark 4.3]{T2} that any Gauduchon metric $\omega_0$ on them satisfies
$\int_M\omega_0\wedge c_1^{\mathrm{BC}}(M)>0$. Indeed as we have seen this holds whenever $M$ has a plurianticanonical divisor, and
\cite[Remark 4.3]{T2} shows that every primary Hopf surface has an anticanonical divisor. But every Hopf surface is either primary or secondary (i.e. a finite unramified quotient $\ti{M}\to M$ of a primary one) and an anticanonical divisor on $\ti{M}$ gives a plurianticanonical divisor on $M$.
In particular the volume of $M$ along the flow is
$$V_t=\int_M \omega(t)^2=\int_M\omega_0^2-2t\int_M \omega_0\wedge c_1^{\mathrm{BC}}(M),$$
which goes to zero in finite time, and so the Chern-Ricci flow exists for finite time. In fact, since every curve on $M$ is homologous to zero, the flow exists precisely as long as the volume stays positive and then it collapses, so
we are in case (b).
We will investigate the behavior of the flow on a family of Hopf manifolds in Section \ref{section:hopf}.

In case $(3)$, if we call $b_2(M)=n>0$, we have $\int_M c_1^2(M)=-n$ (see e.g. \cite[p.494]{T1}). It follows
that the Chern-Ricci flow starting from any initial Gauduchon metric $\omega_0$ exists only for finite time, because
the volume of $M$ along the flow is
$$V_t=\int_M \omega(t)^2=\int_M\omega_0^2-2t\int_M \omega_0\wedge c_1^{\mathrm{BC}}(M)
-4\pi^2 nt^2,$$
which goes to zero in finite time. Furthermore, since $M$ is minimal and using again Theorem \ref{theorem:surfaces},
we see that we are in case (b).
Note that carrying out a space-time rescaling of
the flow to have constant volume will still produce a solution that exists only for a finite time (cf. the discussion in \cite{StT3}).

In case $(4)$ we have $\int_M c_1^2(M)=0$ and by definition some power of the
canonical bundle $\ell K_M$, $\ell\geq 1$, is effective. Arguing as before, this implies that
$$\int_M \omega_0\wedge c_1^{\mathrm{BC}}(M)<0.$$
Therefore the volume along the Chern-Ricci flow remains positive for all time,
and since $M$ is minimal the flow has a long time solution and we are in case (a).
\end{proof}

Furthermore, arguing like in \cite[Proposition 8.4]{SW4}, one can show that in case (b) the volume goes to zero quadratically only on a Fano manifold in a positive multiple of the anticanonical class, otherwise it goes to zero linearly.

We finish this section with some further discussions and conjectures.
We begin by considering what happens in case (c), along the lines of \cite[Theorem 8.3]{SW4}.
First of all note that $\alpha_T$ is a $\de\db$-closed real $(1,1)$ form with $\int_M\alpha_T^2>0$. It follows from \cite[Lemma 4]{Bu1} that if $\psi$ is another $\de\db$-closed real $(1,1)$ form with $\int_M \psi\wedge\alpha_T=0$
then $\int_M\psi^2\leq 0$ with equality if and only if $\psi=\ddbar f$ for some function $f$.
If now $D_1, D_2$ are irreducible distinct $(-1)$-curves (so $D_1\cdot D_2 \geq 0$) with $\int_{D_1}\alpha_T=\int_{D_2}\alpha_T=0$, then as before we can express the divisor $D_1+D_2$ as
$$2\pi[D_1+D_2]=\eta+\ddbar\log|s|^2_h,$$
in the sense of currents, where $\eta$ is a smooth closed form that represents $2\pi c_1(D_1+D_2)$.
Therefore, since $\de\db\alpha_T=0$,
$$0=\int_{D_1}\alpha_T+\int_{D_2}\alpha_T=\frac{1}{2\pi}\int_M\eta\wedge\alpha_T,$$
and so $4\pi^2(D_1+D_2)^2=\int_M\eta^2\leq 0$, with equality implying that $\eta=\ddbar f$.
But this would give
$$0=-\int_M \eta\wedge c_1^{\mathrm{BC}}(M)=2\pi K_M\cdot (D_1+D_2)<0,$$
a contradiction.  Thus  we conclude that $(D_1+D_2)^2<0$,
which implies that $D_1\cdot D_2=0$ and $D_1,D_2$ are disjoint. The set of all these $(-1)$-curves is finite, $D_1, \ldots, D_k$ say, because they give linearly independent classes in homology. Contracting all of them
we get a contraction map $\pi:M\to N$, where $N$ is a compact complex surface which is K\"ahler if and only if $M$ is.

In light of the behavior of the K\"ahler-Ricci flow on surfaces, it is natural to ask whether the Chern-Ricci flow contracts, in the sense of \cite{SW2}, the $(-1)$-curves $D_1, \ldots, D_k$  to points $p_1, \ldots, p_k$ on $N$.  First, do the metrics $\omega(t)$ converge smoothly on compact subsets of $M \setminus \bigcup_i D_i$ to a smooth K\"ahler metric $\omega(T)$ on $M \setminus \bigcup_i D_i$, as in Theorem \ref{theorem:blowdown}?  This would hold if we can find $\beta$, a $\de\db$-closed real $(1,1)$ form on $N$, and $f$ a smooth function on $M$
such that $\pi^*\beta=\alpha_T+\ddbar f$.

Furthermore, does the
 family $(M, \omega(t))$ converge in the sense of Gromov-Hausdorff to a limiting compact metric space $(N, d)$ as $t \rightarrow T^-$?  Can we produce a solution of the Chern-Ricci flow $\tilde{\omega}(t)$ on $N$ for $t \in [T, T']$ (with $T'>T$)  such that $\tilde{\omega}(T)$ on $N \setminus \{ p_1, \ldots, p_k \}$ can be identified with $\omega(T)$ via the blow-down map?  Does the family $(N, \tilde{\omega}(t))$ converge in the Gromov-Hausdorff sense to $(N,d)$ as $t \rightarrow T^+$?

If this can be carried out, one could continue this process a finite number of times to obtain a solution of the Chern-Ricci flow `with canonical surgical contractions' \cite{SW2} all the way to the minimal model of $M$.

Finally, what is the long time behavior of the Chern-Ricci flow on a minimal model $M$?  In the case when $M$ has Kodaira dimension zero,
so that it has torsion canonical bundle (and is either a Calabi-Yau surface or a Kodaira surface)  the flow
always converges to a Chern-Ricci flat metric (which need not be K\"ahler, even if $M$ is Calabi-Yau) by Gill's Theorem \ref{theorem:gill}.   Another case (of course there are many more) is when $c_1(M)<0$.  We discuss this case, for any dimension, in the next section.

\section{Convergence when $c_1(M)<0$}\label{section:c1neg}
In this section we assume that $M$ is a compact K\"ahler manifold with $c_1(M)<0$ and we give the proof of Theorem \ref{theorem:c1n}.

Start by fixing a smooth volume form $\Omega$ with $\Ric(\Omega)<0$, which is possible
because $c_1(M)<0$, and note that $\Ric(\Omega)$ also represents $c_1^{\mathrm{BC}}(M)$. Therefore, for all $t>0$ the $(1,1)$ form $\omega_0-t\Ric(\Omega)$ is positive, and by Theorem \ref{theorem:maximal} the Chern-Ricci flow \eqref{crf} exists for all time.  Call $\ti{\omega}(s)$ its solution.

We consider now the rescaled metrics $\omega=\frac{\ti{\omega}}{s+1}$ and a new time parameter $t=\log(s+1)$, so that
the new metrics solve
\begin{equation} \label{crf2}
\frac{\de}{\de t} \omega = - \Ric(\omega)-\omega, \quad \omega|_{t=0}=\omega_0,
\end{equation}
for all positive $t$.
First of all we show that \eqref{crf2} is equivalent to a parabolic complex Monge-Amp\`ere equation.
To see this, call $\hat{\omega}=-\Ric(\Omega)+e^{-t}(\Ric(\Omega)+\omega_0),$ and note that they are Hermitian metrics that satisfy
\begin{equation}
\ddt{} \hat{\omega} = - \Ric(\Omega)-\hat{\omega}, \quad \hat{\omega}|_{t=0}=\omega_0,
\end{equation}
and $\hat{\omega}$ converges smoothly to $-\Ric(\Omega)$ as $t$ goes to infinity.
It follows that
$$\ddt{} (\omega-\hat{\omega})=-(\omega-\hat{\omega})+\ddbar\log\frac{\omega^n}{\Omega}.$$
Consider now the solution $\vp$ of the equation
\begin{equation} \label{pma}
\ddt{} \vp = \log\frac{\omega^n}{\Omega}-\vp,\quad \vp|_{t=0}=0,
\end{equation}
which exists for all positive time as can be seen by regarding it as an ODE in $t$ for each fixed point on $M$. We have that
$$\ddt{} \left(e^t(\omega-\hat{\omega}-\ddbar\vp)\right)=0, \quad (\omega-\hat{\omega}-\ddbar\vp)|_{t=0}=0,$$
which implies that $\omega=\hat{\omega}+\ddbar\vp$ holds for $t\geq 0$. Then Theorem \ref{theorem:c1n} follows directly from:

\begin{theorem}\label{conv}
As $t\to \infty$ we have that $\vp\to\vp_\infty$ smoothly, and $\omega_\infty:=-\Ric(\Omega)+\ddbar\vp_\infty$
equals the unique K\"ahler-Einstein metric $\omega_{\mathrm{KE}}$.
\end{theorem}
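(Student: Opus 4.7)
The plan is to establish uniform $C^\infty$ estimates for $\varphi$ solving \eqref{pma} on $M\times[0,\infty)$, show exponential convergence to a smooth limit $\varphi_\infty$, and then identify $\omega_\infty=-\Ric(\Omega)+\ddbar\varphi_\infty$ with $\omega_{\mathrm{KE}}$ by computing its defining equation. Throughout, the damping term $-\varphi$ on the right-hand side of \eqref{pma} is what forces exponential convergence.

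First, the maximum principle applied directly to \eqref{pma} yields a uniform $C^0$ bound on $\varphi$: at a space-time maximum with $t>0$ one has $\ddbar\varphi\le 0$ and $\dot\varphi\ge 0$, so from \eqref{pma} we get $\varphi\le\log(\hat\omega^n/\Omega)\le C$, using that $\hat\omega\to-\Ric(\Omega)$ smoothly so the family $\hat\omega$ is uniformly bounded for $t\in[0,\infty)$. The lower bound is symmetric. Differentiating \eqref{pma} in $t$ gives
\[
\left(\ddt{}-\Delta\right)\dot\varphi = \tr{\omega}{\dot{\hat\omega}} - \dot\varphi = -e^{-t}\tr{\omega}{(\Ric(\Omega)+\omega_0)} - \dot\varphi,
\]
while for $u:=\dot\varphi+\varphi=\log(\omega^n/\Omega)$ a direct computation gives $(\ddt-\Delta)u=-n-\tr{\omega}{\Ric(\Omega)}$. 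A maximum principle argument on test functions of the form $\dot\varphi\pm A\varphi$ then yields $|\dot\varphi|\le C$, and in particular $C^{-1}\Omega\le\omega^n\le C\Omega$.

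For the second order estimate, I would apply Proposition \ref{prop:bigcalc} with $\hat g=g_0$ and run the Phong-Sturm maximum principle argument on
\[
Q = \log\tr{g_0}{g} - A\varphi + \frac{1}{\varphi+\tilde C},
\]
exactly as in Lemma \ref{lemma:Testimates}(iii) and in the proof of Theorem \ref{theorem:blowdown}. The torsion terms arising from the non-K\"ahlerness of $\omega_0$ and $\hat\omega$ are absorbed as in those earlier arguments, using that $\hat\omega_t\ge c_0\omega_0$ uniformly for $t\ge 1$ (which follows from $\hat\omega_t\to-\Ric(\Omega)>0$ smoothly). Combined with the volume form bound, this gives $C^{-1}\omega_0\le\omega(t)\le C\omega_0$, after which Gill's higher order estimates \cite{G} supply uniform $C^k$ bounds on $\varphi$ for every $k$.

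With $\omega$ uniformly equivalent to $\omega_0$, the factor $\tr{\omega}{(\Ric(\Omega)+\omega_0)}$ is uniformly bounded, so applying the maximum principle to $e^{\alpha t}\dot\varphi$ for any fixed $\alpha\in(0,1)$ improves the bound to $|\dot\varphi|\le Ce^{-\alpha t}$. This exponential decay implies $\int_0^\infty \|\dot\varphi(t)\|_{C^0}\,dt<\infty$, so $\varphi(t)\to\varphi_\infty$ uniformly, and the uniform $C^k$ bounds together with standard interpolation upgrade this to $C^\infty$ convergence. Passing to the limit in $(\hat\omega+\ddbar\varphi)^n=e^{\dot\varphi+\varphi}\Omega$ gives
\[
(-\Ric(\Omega)+\ddbar\varphi_\infty)^n = e^{\varphi_\infty}\Omega,
\]
so $\omega_\infty=-\Ric(\Omega)+\ddbar\varphi_\infty$ is a positive closed $(1,1)$-form, i.e.\ a K\"ahler metric. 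Taking $-\ddbar\log$ of both sides yields $\Ric(\omega_\infty)=-\ddbar\varphi_\infty+\Ric(\Omega)=-\omega_\infty$, so $\omega_\infty$ is K\"ahler-Einstein with negative Ricci curvature, and by Aubin-Yau uniqueness $\omega_\infty=\omega_{\mathrm{KE}}$. The main obstacle is the second order estimate in the Hermitian setting, where the Phong-Sturm trick already used in Theorem \ref{theorem:blowdown} is essential in order to absorb the torsion terms coming from the non-K\"ahlerness of $\omega_0$.
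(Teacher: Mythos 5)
Your proposal is correct and follows essentially the same route as the paper: uniform $C^0$ and $\dot\vp$ bounds via the maximum principle applied to $\vp+\dot\vp$, the Phong--Sturm second order estimate from Proposition \ref{prop:bigcalc} with $\hat g = g_0$, Gill's higher order estimates, exponential decay of $\dot\vp$, and identification of the limit through the Monge--Amp\`ere equation. The only (harmless) variation is that you obtain the decay $|\dot\vp|\le Ce^{-\alpha t}$ by applying the maximum principle to $e^{\alpha t}\dot\vp$ with $\alpha<1$ after the metric equivalence is in place, whereas the paper uses the borderline quantities $\vp+\dot\vp+nt-e^t\dot\vp$ and $e^t\dot\vp$ to get $|\dot\vp|\le Cte^{-t}$, with the upper bound obtained even before the second order estimate.
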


\begin{proof}
First, we derive uniform estimates for $\vp$ independent of $t$.  The estimates for $\| \varphi\|_{C^0}$ and $\| \dot{\varphi} \|_{C^0}$ follow from the same arguments as in \cite{Ca,  TZ, Ts}. Indeed, a simple maximum principle argument shows that $|\vp|\leq C$ independent of $t$.  Compute
\[\begin{split}
\left(\ddt{}-\Delta\right)\dot{\vp}&=-\dot{\vp}-\tr{\omega}{(\Ric(\Omega)+\hat{\omega})}\\
&=-\dot{\vp}-n+\Delta\vp-\tr{\omega}{\Ric(\Omega)},
\end{split}\]
and so
$$\left(\ddt{}-\Delta\right)(\vp+\dot{\vp})=-n-\tr{\omega}{\Ric(\Omega)}.$$
At the minimum of $\vp+\dot{\vp}$, assuming it occurs for $t>0$, we have
$-\tr{\omega}{\Ric(\Omega)}\leq n,$ and since $\Ric(\Omega)<0$ the arithmetic-geometric means inequality
gives us $\vp+\dot{\vp}=\log\frac{\omega^n}{\Omega}\geq -C$ at this point and hence everywhere. Since $|\vp|\leq C$, we get $\dot{\vp}\geq -C$. But we also have that $\Ric(\Omega)+\hat{\omega}=e^{-t}(\Ric(\Omega)+\omega_0),$ and so
$$\left(\ddt{}-\Delta\right)(\vp+\dot{\vp}+nt-e^t\dot{\vp})=\tr{\omega}{\omega_0}>0,$$
which implies by the maximum principle that, for $t\ge 1$, $\dot{\vp}\leq Cte^{-t}\leq Ce^{-t/2}$.  This estimate is the same as the one in \cite{TZ}.

We feed this into the second order estimate as before. We have
\begin{align}  \nonumber
\left( \ddt{} - \Delta \right) \log \tr{g_0}{g} & \le  \frac{2e^{-t}}{(\tr{g_0}{g})^2} \textrm{Re} \left( g^{\ov{\ell}k}  (T_0)^p_{kp}  \partial_{\ov{\ell}} \tr{g_0}{g} \right) + C \tr{g}{g_0}-1,
\end{align}
assuming we are calculating at a point with $\tr{g_0}{g} \ge 1$. Indeed, this follows from the calculations of Proposition \ref{prop:bigcalc} with $\hat{g}=g_0$, with the minor change that now $g(t)$ evolves by the normalized Chern-Ricci flow. In particular, we now have $d\omega=e^{-t}d\omega_0$ instead of $d\omega=d\omega_0$.

Arguing  as in the proof of Lemma \ref{lemma:Testimates} we get that $\omega$ is uniformly equivalent to $\omega_0$ independent of $t$. Uniform higher order estimates are then provided by Gill's paper \cite{G}.
In particular, it follows that
$$\left(\ddt{}-\Delta\right)(e^t\dot{\vp})=-\tr{\omega}{(\Ric(\Omega)+\omega_0)}\geq -C,$$
and so the maximum principle implies that $\dot{\vp}\geq -C(1+t)e^{-t}\geq -Ce^{-t/2}.$
This implies that as $t$ approaches infinity $\dot{\vp}$ converges uniformly to zero exponentially fast, which implies that $\vp$ converges uniformly exponentially fast to a continuous limit function $\vp_\infty$. Since we have uniform higher order estimates for $\vp$, it follows that $\vp_\infty$ is actually smooth and the convergence of $\vp$ to $\vp_\infty$ is in the smooth topology. Therefore we can pass to the limit in \eqref{pma} and see that the limiting metric $\omega_\infty=-\Ric(\Omega)+\ddbar\vp_\infty$ satisfies
$$\log\frac{\omega_\infty^n}{\Omega}=\vp_\infty,$$
and taking $\ddbar$ of this, we get
$$\Ric(\omega_\infty)=-\omega_\infty,$$
so that $\omega_\infty$ is the unique K\"ahler-Einstein metric on $M$.
\end{proof}

\section{Hopf Manifolds} \label{section:hopf}

In this section we study the Chern-Ricci flow on some Hopf manifolds.

As in the Introduction, for $\alpha = (\alpha_1, \ldots, \alpha_n) \in \mathbb{C}^n \setminus \{ 0 \}$ with  $|\alpha_1|=\dots=|\alpha_n| \neq 1$, let $M_{\alpha}$ be the Hopf manifold
 $M_{\alpha}=(\mathbb{C}^n\setminus \{0\})/\sim$, where
$$(z_1, \dots,z_n) \sim \left(\alpha_1 z_1, \dots, \alpha_n z_n\right).$$
We consider the metric
$$\omega_H = \frac{\delta_{ij}}{r^2} \mn dz_i \wedge d\ov{z}_j,$$
where $r^2=\sum_{j=1}^n |z_j|^2$. If $n=2$, $\omega_H$ is $\ddb$-closed, but this is false if $n>2$.
We now show that $\omega(t) = \omega_H - t \Ric(\omega_H)$ gives an explicit solution of the Chern-Ricci flow on $M_{\alpha}$.

\begin{proof}[Proof of Proposition \ref{prop:hopf1}]
Observe that  $\det(\omega_H)=r^{-2n}$ and
$$\Ric(\omega_H) = n \ddbar \log r^2 = \frac{n}{r^2} \left(\delta_{ij} - \frac{\ov{z}_i z_j}{r^2}\right)\mn dz_i \wedge d\ov{z}_j \ge 0.$$
For $t<\frac{1}{n}$ we have the Hermitian metrics
$$\omega(t)=\omega_H-t\Ric(\omega_H)=\frac{1}{r^2}\left((1-nt)\delta_{ij}+nt\frac{\ov{z}_i z_j}{r^2}\right)
\mn dz_i \wedge d\ov{z}_j.$$
To compute the determinant of $\omega(t)$, note that the matrix $nt\frac{\ov{z}_i z_j}{r^2}$ has eigenvalue
$nt$ with multiplicity $1$ and all the other eigenvalues are zero, while the matrix $(1-nt)\delta_{ij}$
has eigenvalue $1-nt$ with multiplicity $n$ and is diagonal in every coordinate system.
Choosing a coordinate system that makes $\frac{\ov{z}_i z_j}{r^2}$ diagonal we see that the eigenvalues
of $r^2\omega(t)$ are $1-nt$ with multiplicity $n-1$ and $1$ with multiplicity $1$. Therefore
$$\det(\omega(t))=\frac{(1-nt)^{n-1}}{r^{2n}},$$
from which it follows that
$\Ric(\omega(t))=\Ric(\omega_H),$ which implies that $\omega(t)$ solves the Chern-Ricci flow
on the maximal existence interval $[0,\frac{1}{n}).$
  \end{proof}

One can also consider more general Hopf manifolds, such as the Hopf surface $M_\alpha$ with $|\alpha_1|\neq|\alpha_2|$. In this case, Gauduchon and Ornea \cite{GO} have constructed an explicit Gauduchon metric $\omega_{\mathrm{GO}}$ (which is also locally conformally K\"ahler). It would be interesting to see if the solution of the Chern-Ricci flow starting at $\omega_{\mathrm{GO}}$ can also be written down explicitly.

Next we give the proof of Proposition \ref{prop:hopf2}.

\begin{proof}[Proof of Proposition \ref{prop:hopf2}]
Write $ \hat{\omega}_t = \omega_H - t \textrm{Ric}(\omega_H)$.  Then we can write $\omega(t)$ as
 $\omega(t) = \hat{\omega}_t + \ddbar \varphi$ for a function $\varphi= \varphi(t)$ solving the parabolic complex Monge-Amp\`ere equation
 \begin{equation}
\ddt{\varphi} = \log \frac{(\hat{\omega}_t + \ddbar \varphi)^n}{\Omega}, \quad \hat{\omega}_t + \ddbar \varphi >0, \quad \varphi|_{t=0}=\psi,
\end{equation}
for $\psi$ as in the statement of Proposition \ref{prop:hopf2}.  A solution exists for $t \in [0,1/n)$.
 In what follows, we will drop the subscript $t$ and write $\hat{\omega}$ for $\hat{\omega}_t$.

 We wish to bound $\tr{\omega_H}{\omega}$ from above.  First we claim that
 \begin{align} \nonumber
 \left( \ddt{} - \Delta \right) \tr{\omega_H}{\omega} & = -g^{\ov{j} i} (\partial_i \partial_{\ov{j}} g_H^{\ov{\ell} k}) g_{k \ov{\ell}} +
 g_H^{\ov{\ell} k} g^{\ov{j}i} ( \partial_k \partial_{\ov{\ell}} \hat{g}_{i \ov{j}} - \partial_i \partial_{\ov{j}} \hat{g}_{k \ov{\ell}}) \\
 &\ \ \ \ \!- 2 \textrm{Re} (g^{\ov{j} i} (\partial_i g_H^{\ov{\ell}k})(\partial_{\ov{j}} g_{k \ov{\ell}}))  - g_H^{\ov{\ell}k} g^{\ov{j}p} g^{\ov{q}i} (\partial_k g_{p\ov{q}})( \partial_{\ov{\ell}} g_{i \ov{j}}).\label{hopfclaim1}
 \end{align}

To see (\ref{hopfclaim1}),  compute
\begin{align} \nonumber
\Delta \tr{\omega_H}{\omega} & = g^{\ov{j} i} \partial_i \partial_{\ov{j}} ( g_H^{\ov{\ell}k} g_{k \ov{\ell}}) \\
& = g^{\ov{j} i} (\partial_i \partial_{\ov{j}} g_H^{\ov{\ell} k}) g_{k \ov{\ell}} + g^{\ov{j}i} g_H^{\ov{\ell} k} \partial_i \partial_{\ov{j}} g_{k \ov{\ell}} + 2\textrm{Re}(g^{\ov{j} i} (\partial_i g_H^{\ov{\ell}k}) (\partial_{\ov{j}} g_{k \ov{\ell}}) ), \label{deltatr}
\end{align}
and
\begin{align} \nonumber
\ddt{} \tr{\omega_H}{\omega} & = g_H^{\ov{\ell}k} \partial_k \partial_{\ov{\ell}} \log \det g \\ \label{ddttr}
& = - g_H^{\ov{\ell}k} g^{\ov{j}p} g^{\ov{q}i} (\partial_k g_{p\ov{q}})( \partial_{\ov{\ell}} g_{i \ov{j}}) + g_H^{\ov{\ell}k} g^{\ov{j} i} \partial_k \partial_{\ov{\ell}} g_{i \ov{j}}.
\end{align}
Then (\ref{hopfclaim1}) follows from combining (\ref{deltatr}) and (\ref{ddttr}) and using the fact that $g_{i\ov{j}} = \hat{g}_{i\ov{j}} + \varphi_{i \ov{j}}$.

For the first term on the right hand side of (\ref{hopfclaim1}), note that
\begin{equation} \nonumber
g_H^{\ov{\ell}k}=r^2\delta_{k\ell},\quad \de_{\ov{j}}g_H^{\ov{\ell}k}=z_j\delta_{k\ell},
\quad \de_i\de_{\ov{j}}g_H^{\ov{\ell}k}=\delta_{ij}\delta_{k\ell},
\end{equation}
so that
\begin{equation} \label{doubletrace}
g^{\ov{j} i} (\partial_i \partial_{\ov{j}} g_H^{\ov{\ell}k}) g_{k \ov{\ell}}=\sum_{i,k} g^{\ov{i}i} g_{k\ov{k}}=
(\tr{\omega_H}{\omega}) (\tr{\omega}{\omega_H}).
\end{equation}

 For the second term on the right hand side of (\ref{hopfclaim1}), calculate
\begin{equation} \label{hopfclaim2}
g_H^{\ov{\ell} k} g^{\ov{j}i}  ( \partial_k \partial_{\ov{\ell}} \hat{g}_{i \ov{j}}- \partial_i \partial_{\ov{j}} \hat{g}_{k \ov{\ell}})
=\tr{\omega}{\Ric(\omega_H})-ng^{\ov{j}i}\frac{\ov{z}_i z_j}{r^4}-(n-2)\tr{\omega}{\omega_H}.
\end{equation}
Indeed, to see (\ref{hopfclaim2}) we
compute
\begin{align} \nonumber
\hat{g}_{i \ov{j}} & = \frac{1}{r^2} \left( (1-nt) \delta_{ij} +nt \frac{\ov{z}_i z_j}{r^2} \right) \\ \label{useful}
\partial_{\ov{\ell}} \hat{g}_{i \ov{j}} & =
- \frac{1}{r^4} z_{\ell} \left( (1-nt) \delta_{ij} + \frac{2nt \ov{z}_i z_j}{r^2} \right) + \frac{nt z_j \delta_{i\ell}}{r^4},
\end{align}
and
\begin{align} \nonumber
\partial_k \partial_{\ov{\ell}} \hat{g}_{i \ov{j}} &= \frac{2}{r^6} \ov{z}_k z_{\ell} \left( (1-nt) \delta_{ij} + \frac{2nt \ov{z}_i z_j}{r^2} \right) - \frac{1}{r^4} \delta_{k \ell} \left((1-nt) \delta_{ij} + \frac{2nt \ov{z}_i z_j}{r^2} \right) \\ \nonumber
& \ \ \ - \frac{1}{r^4} z_{\ell} \left( - \frac{2nt \ov{z}_k \, \ov{z}_i z_j}{r^4} + \frac{2nt \ov{z}_i \delta_{jk}}{r^2} \right) - \frac{2nt \ov{z}_k z_j \delta_{i \ell}}{r^6} + \frac{nt \delta_{jk} \delta_{i\ell}}{r^4} \\ \nonumber
& = \frac{6nt \ov{z}_i z_j \ov{z}_k z_{\ell}}{r^8} + \frac{1}{r^4} (nt \delta_{jk} \delta_{i\ell} - (1-nt) \delta_{k \ell} \delta_{ij}) + \frac{2}{r^6} \ov{z}_k z_{\ell} \delta_{ij} \\ \nonumber
& \ \ \   - \frac{2nt}{r^6} \left( \delta_{k\ell} \ov{z}_i z_j + \delta_{jk} \ov{z}_i z_{\ell} + \delta_{i\ell} \ov{z}_k z_j + \delta_{ij} \ov{z}_k z_{\ell} \right),
\end{align}
Finally, this gives:
$$\partial_k \partial_{\ov{\ell}} \hat{g}_{i \ov{j}} - \partial_i \partial_{\ov{j}} \hat{g}_{k \ov{\ell}} = \frac{2}{r^6} ( \ov{z}_k z_{\ell} \delta_{ij} - \ov{z}_i z_j \delta_{k \ell} ),$$
and
\[\begin{split}
g^{\ov{j}i} g_H^{\ov{\ell} k} ( \partial_k \partial_{\ov{\ell}} \hat{g}_{i \ov{j}}- \partial_i \partial_{\ov{j}} \hat{g}_{k \ov{\ell}}) &= \frac{2}{r^2} g^{\ov{j} i} \left( \delta_{ij} - \frac{n \ov{z}_i z_j}{r^2} \right)\\
&=\tr{\omega}{\Ric(\omega_H})-ng^{\ov{j}i}\frac{\ov{z}_i z_j}{r^4}-(n-2)\tr{\omega}{\omega_H},
\end{split}\]
establishing (\ref{hopfclaim2}).

 Combining (\ref{hopfclaim1}), (\ref{doubletrace}) and (\ref{hopfclaim2}) we obtain
 \begin{align} \nonumber
 \left( \ddt{} - \Delta \right) \tr{\omega_H}{\omega} & = -(\tr{\omega_H}{\omega}) (\tr{\omega}{\omega_H}) + \tr{\omega}{\Ric(\omega_H})-ng^{\ov{j}i}\frac{\ov{z}_i z_j}{r^4}\\ \nonumber
 &\ \ \ \ \! -(n-2)\tr{\omega}{\omega_H}
  - 2 \textrm{Re} (g^{\ov{j} i} (\partial_i g_H^{\ov{\ell}k})(\partial_{\ov{j}} g_{k \ov{\ell}})) \\
  &\ \ \ \ \!  - g_H^{\ov{\ell}k} g^{\ov{j}p} g^{\ov{q}i} (\partial_k g_{p\ov{q}})( \partial_{\ov{\ell}} g_{i \ov{j}}). \label{nevolvetr2}
 \end{align}
 The troublesome term is the 5th one on the right hand side.  We write this term as:
\begin{align} \nonumber
-2\textrm{Re}(g^{\ov{j} i} (\partial_i g_H^{\ov{\ell}k}) (\partial_{\ov{j}} g_{k \ov{\ell}})) & = -2\textrm{Re}(g^{\ov{j} i} (\partial_i g_H^{\ov{\ell}k}) (\partial_{\ov{\ell}} g_{k \ov{j}})) \\ \nonumber
&\ \ \ \ \! - 2\textrm{Re}(g^{\ov{j} i} (\partial_i g_H^{\ov{\ell}k}) (  \partial_{\ov{j}} \hat{g}_{k \ov{\ell}} - \partial_{\ov{\ell}} \hat{g}_{k \ov{j}}) \\ \label{IandII}
& =: A_1 + A_2.
\end{align}
For $A_2$ use \eqref{useful} to compute
\[
\partial_{\ov{j}} \hat{g}_{k \ov{\ell}} - \partial_{\ov{\ell}} \hat{g}_{k \ov{j}}=\frac{1}{r^4} (z_{\ell} \delta_{kj} - z_j \delta_{k\ell}),
\]
and so
\begin{align} \nonumber
A_2 & =- \frac{1}{r^4} 2\textrm{Re} ( g^{\ov{j}i} \ov{z}_i \delta_{k \ell} (z_{\ell} \delta_{kj} - z_j \delta_{k\ell}) )\\ \nonumber
& =- \frac{1}{r^4} 2 \textrm{Re} \left( g^{\ov{j}i} (\ov{z}_i z_j- n \ov{z}_i z_j) \right)\\ \label{A2}
& =   2(n-1)  g^{\ov{j}i} \frac{\ov{z}_i z_j}{r^4}.
\end{align}

To deal with $A_1$ we introduce an inner product on tensors of type $\Phi = \Phi_{i j \ov{k}}$.  For tensors $\Psi$ and $\Phi$ of this type, define
$$\langle \Psi, \Phi \rangle = g_H^{\ov{j}i} g^{\ov{\ell}k} g^{\ov{q}p} \Psi_{i k \ov{q}} \ov{ \Phi_{j \ell \ov{p}}}.$$
Then if $\Phi_{i j \ov{k}} = \partial_i g_{j\ov{k}}$ we see that the last term on the right hand side of (\ref{nevolvetr2}) is $- | \Phi|^2$.

Now compute
\begin{align} \nonumber
A_1 & = -2\textrm{Re}(g^{\ov{j} i} (\partial_i g_H^{\ov{\ell}k}) (\partial_{\ov{\ell}} g_{k \ov{j}})) \\ \nonumber
& = - 2 \textrm{Re} (g^{\ov{j} i } g_H^{\ov{\ell} p} g^{\ov{v}k} g_{u \ov{v}} (g_H)_{p \ov{q}} (\partial_i g_H^{\ov{q} u}) \partial_{\ov{\ell}} g_{k \ov{j}})\\ \nonumber
& = - 2 \textrm{Re} \langle \Psi, \Phi \rangle \\
& \le 2 |\Psi| |\Phi|,
\end{align}
with $\Psi_{i j \ov{k}} = (g_H)_{i \ov{q}} (\partial_j g_H^{\ov{q}u})  g_{u \ov{k}}$ and  $\Phi_{i j \ov{k}} = \partial_i g_{j\ov{k}}$.  But
$$\Psi_{i j \ov{k}} = \frac{1}{r^2} \delta_{iq}  \ov{z}_j \delta_{uq} g_{u\ov{k}} = \frac{1}{r^2} \ov{z}_j g_{i \ov{k}},$$
and so
$$|\Psi|^2 = \frac{1}{r^4} g_H^{\ov{j} i} g^{\ov{\ell} k} g^{\ov{q}p} \ov{z}_k g_{i \ov{q}} z_{\ell} g_{p \ov{j}} = \frac{1}{r^4} g_H^{\ov{j}i} g_{i \ov{j}} g^{\ov{\ell} k} \ov{z}_k z_{\ell} =( \tr{\omega_H}{\omega}) g^{\ov{\ell} k} \frac{\ov{z}_k z_{\ell}}{r^4}. $$
Then
\begin{align} \nonumber
A_1 & \le |\Psi|^2+ |\Phi|^2 \\ \nonumber
& = ( \tr{\omega_H}{\omega}) g^{\ov{\ell} k} \frac{\ov{z}_k z_{\ell}}{r^4} + |\Phi|^2 \\
& = (\tr{\omega_H}{\omega}) (\tr{\omega}{\omega_H}) - (\tr{\omega_H}{\omega}) \frac{1}{r^2} g^{\ov{j}i} \left( \delta_{ij} - \frac{\ov{z}_i z_j}{r^2} \right) + |\Phi|^2. \label{I}
\end{align}
Combining (\ref{nevolvetr2}), (\ref{IandII}), (\ref{A2}) and (\ref{I}) we have
\[\begin{split}
\left(\ddt{} - \Delta\right) \tr{\omega_H}{\omega} & \le \tr{\omega}{\Ric(\omega_H)}
-ng^{\ov{j}i}\frac{\ov{z}_i z_j}{r^4}-(n-2)\tr{\omega}{\omega_H}\\
&\ \ \ - (\tr{\omega_H}{\omega}) \frac{1}{r^2} g^{\ov{j}i} \left( \delta_{ij} - \frac{\ov{z}_i z_j}{r^2} \right)
+ 2(n-1)  g^{\ov{j}i} \frac{\ov{z}_i z_j}{r^4}\\
&=\tr{\omega}{\Ric(\omega_H)}-(n-2)\frac{1}{r^2}g^{\ov{j}i} \left( \delta_{ij} - \frac{\ov{z}_i z_j}{r^2} \right)\\
&\ \ \ - \frac{1}{n}(\tr{\omega_H}{\omega})\tr{\omega}{\Ric(\omega_H)} \\
&=\tr{\omega}{\Ric(\omega_H)}-\frac{n-2}{n}\tr{\omega}{\Ric(\omega_H)}\\
&\ \ \ - \frac{1}{n}(\tr{\omega_H}{\omega})\tr{\omega}{\Ric(\omega_H)} \\
& = \left(\frac{2}{n} - \frac{1}{n} \tr{\omega_H}{\omega}\right) \tr{\omega}{\textrm{Ric}(\omega_H)}.
\end{split}\]
Since $\Ric(\omega_H)\ge 0$, we
 conclude by the maximum principle that $\tr{\omega_H}{\omega}$ is uniformly bounded from above.
\end{proof}

Finally, we remark that this implies the convergence of the Chern-Ricci flow at the level of potentials.  Indeed, recall that $\omega(t) = \hat{\omega}_t + \ddbar \varphi$ with $\varphi$ solving (\ref{ma2}) and $\hat{\omega}_t = \omega_H - t \Ric(\omega_H)$.  Moreover, $\hat{\omega}_T$ is nonnegative, so we can apply the argument of Proposition \ref{prop:nn} to obtain uniform upper bounds for $|\varphi|$ and $\dot{\varphi}$.  It follows immediately that as $t \rightarrow T$, $\varphi(t)$ converges pointwise to a function $\varphi(T)$ on $M_{\alpha}$.
On the other hand, by Proposition \ref{prop:hopf2} we have uniform bounds  for $|\Delta_{g_H} \varphi|$.  Thus standard elliptic theory gives a uniform bound for $\| \varphi(t) \|_{C^{1+\beta}}$  for any $\beta \in (0,1)$.  It follows that $\varphi \rightarrow \varphi(T)$ in $C^{1+\beta}$  for any $\beta \in (0,1)$.

\section{The complex Monge-Amp\`ere equation} \label{section:monge}

In this section we prove uniform estimates for solutions of the elliptic complex Monge-Amp\`ere equation.

Let $(M^n,\omega)$ be a compact Hermitian manifold, $F$ a smooth function on $M$ and
$\omega'=\omega+\ddbar\vp$ a Hermitian metric that satisfies
\begin{equation}\label{maell2}
(\omega+\ddbar\vp)^n=e^F\omega^n.
\end{equation}
We will give an alternative proof of the main result of \cite{TW2} (see also \cite{Bl, DK} for different proofs):

\begin{theorem} \label{theorem:tw}
There is a constant $C$ that depends only on $(M,\omega)$, $\sup_M F$ and $\inf_M \Delta F$ such that
\begin{equation}\label{c0}
\sup_M \vp-\inf_M\vp\leq C.
\end{equation}
\end{theorem}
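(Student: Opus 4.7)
The plan is to follow the two-step strategy outlined in \cite{TW1}: first establish the second-order estimate
\[ \tr{\omega}{\omega'} \le C e^{A(\vp - \inf_M \vp)} \]
conjectured there, and then combine it with the Moser iteration argument of \cite{TW1} to obtain (\ref{c0}). Since the implication ``second-order estimate $\Rightarrow$ $C^0$ estimate'' was already established in \cite{TW1}, the essential new ingredient is the second-order estimate itself, which I will prove by applying the Phong-Sturm trick \cite{PS} in exactly the same manner as in Lemma \ref{lemma:Testimates} and Lemma \ref{away}.

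For the second-order estimate, set $\tilde\vp = \vp - \inf_M \vp \ge 0$ and apply the maximum principle to the auxiliary function
\[ Q = \log \tr{\omega}{\omega'} - A \tilde\vp + \frac{1}{\tilde\vp + 1}, \]
for a large constant $A$ to be determined. The computation of $\Delta_{\omega'} \log \tr{\omega}{\omega'}$ proceeds via the elliptic analogue of Proposition \ref{prop:bigcalc} with $\hat g = g$ and with the role of the ``evolving'' metric played by $g'$: the only change from the flow setting is that the ``time-derivative'' contribution becomes $g^{\ov\ell k}\partial_k\partial_{\ov\ell}\log\det g' = \Delta_\omega F - \tr{\omega}{\Ric(\omega)}$, which is bounded below thanks to the hypothesis on $\inf \Delta F$. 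The analysis then reduces to exactly the form encountered in the proof of Lemma \ref{away}. The problematic torsion-gradient term from $(I)$ in Proposition \ref{prop:bigcalc} is controlled at a maximum of $Q$ via the critical-point relation $\partial_i Q = 0$, producing a contribution bounded by
\[ \frac{|\partial \tilde\vp|^2_{g'}}{(\tilde\vp + 1)^3} + C A^2 (\tilde\vp + 1)^3 \frac{\tr{\omega'}{\omega}}{(\tr{\omega}{\omega'})^2}. \]
The first summand is absorbed by $-2|\partial \tilde\vp|^2_{g'}/(\tilde\vp+1)^3$ coming from $\Delta_{\omega'}(1/(\tilde\vp+1))$. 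For the second summand one splits into cases exactly as in Lemma \ref{away}: either $(\tr{\omega}{\omega'})^2 \le A^2 (\tilde\vp + 1)^3$ at the maximum, in which case we directly obtain $\log \tr{\omega}{\omega'} \le A\tilde\vp + C$; or else this term is dominated by $A\tr{\omega'}{\omega}$, which enters $-A\Delta_{\omega'}\tilde\vp = -An + A\tr{\omega'}{\omega}$ with the correct sign once $A$ is chosen sufficiently large. In the latter case, $\tr{\omega'}{\omega}$ is shown to be bounded at the maximum of $Q$, and the arithmetic-geometric means inequality combined with the Monge-Amp\`ere equation,
\[ \tr{\omega}{\omega'} \le \frac{(\tr{\omega'}{\omega})^{n-1}}{(n-1)!}\frac{\det g'}{\det g} \le C e^{\sup F} (\tr{\omega'}{\omega})^{n-1}, \]
then bounds $\tr{\omega}{\omega'}$ there. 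Thus $Q$ is bounded above everywhere, yielding $\tr{\omega}{\omega'} \le C e^{A\tilde\vp}$.

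Given this second-order estimate, Theorem \ref{theorem:tw} follows by plugging into the Moser iteration argument of \cite{TW1}, which pairs the Monge-Amp\`ere equation with suitable powers of $e^{-\alpha \tilde\vp}$ and bootstraps to a uniform $L^\infty$ bound on $\tilde\vp = \vp - \inf_M \vp$. The main obstacle is precisely the non-K\"ahlerness of $\omega$: in the K\"ahler case the classical Aubin-Yau calculation immediately produces $\tr{\omega}{\omega'} \le C e^{A \tilde\vp}$, but in the Hermitian setting the first-order torsion contributions appearing in $(I)$ of Proposition \ref{prop:bigcalc} cannot be absorbed by the naive test function $\log \tr{\omega}{\omega'} - A \tilde\vp$. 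The Phong-Sturm perturbation $1/(\tilde\vp + 1)$ is exactly what resolves this, generating at a critical point of $Q$ the additional negative $|\partial \tilde\vp|^2_{g'}$ term required to kill the offending torsion contribution.
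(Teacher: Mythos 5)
Your proposal is correct and follows essentially the same route as the paper's proof: the same Phong--Sturm quantity $Q$ (your version differs from the paper's only by the additive constant $A\inf_M\varphi$, coming from normalizing with $\tilde\varphi$), the same adaptation of Proposition \ref{prop:bigcalc} with $\hat g=g$ and $g'$ playing the role of the evolving metric (so that $\Delta F - R$ replaces the time derivative, which is exactly where the $\inf_M\Delta F$ hypothesis enters), the same critical-point cancellation and case split at the maximum of $Q$, and the same reduction of the $C^0$ bound to the Moser iteration argument of \cite{TW1}. The only differences are cosmetic, e.g.\ you write out explicitly the final arithmetic-geometric means step $\tr{\omega}{\omega'}\le Ce^{\sup_M F}(\tr{\omega'}{\omega})^{n-1}$ that the paper leaves as ``follows easily.''
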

This is slightly weaker than the result in \cite{TW2}, because there is no dependence of $C$ on $\inf_M \Delta F$ there (and in fact the proof of \cite{TW2} can be easily modified to have $C$ depend only on $p>n$ and $\int_M e^{pF}$ rather than $\sup_M F$).  The point of our discussion here is to establish Theorem \ref{theorem:tw} via a new
 second order estimate which we had previously established \cite{TW1} using the maximum principle only in the cases $n=2$ or $(M, \omega)$ balanced.

From now on we will normalize $\vp$ by assuming $\sup_M\vp=0$.
The estimate we wish to prove is:
\begin{equation}\label{c2}
\tr{g}{g'}\leq Ce^{A(\vp-\inf_M\vp)}
\end{equation}
for uniform constants $C,A$.  The reader may notice that (\ref{c2}) has  the same form as the second order estimates of Yau and Aubin \cite{Au, Y}.

Theorem \ref{theorem:tw} then follows from (9.3).  Indeed,
we can then use the arguments in \cite{TW1}   to derive \eqref{c0} from \eqref{c2}.   The idea is that a second order estimate of the form (\ref{c2}), together with the condition $\ddbar \varphi \ge -  \omega$
 implies, via a Moser iteration argument applied to the exponential of $\varphi$, a zero order estimate for $\varphi$.  This method was employed  in the K\"ahler case in \cite{W}, and a related argument was used in the almost complex setting in \cite{TWY}.

\begin{proof}[Proof of \eqref{c2}]
Following Phong-Sturm \cite{PS} we consider the quantity
$$Q=\log \tr{g}{g'}-A\vp+\frac{1}{\vp-\inf_M\vp +1},$$
for $A\geq 1$ to be determined. Note that $0<\frac{1}{\vp-\inf_M\vp +1}\leq 1$.
We have
\begin{equation}\label{step1}
\begin{split}
\Delta'Q&=\Delta'\log \tr{g}{g'} -An+A\tr{g'}{g}
-\frac{n-\tr{g'}{g}}{(\vp-\inf_M\vp +1)^2} \\ & \ \ \ +\frac{2|\de\vp|^2_{g'}}{(\vp-\inf_M\vp +1)^3}\\
&\geq \Delta'\log \tr{g}{g'} +A\tr{g'}{g} +\frac{2|\de\vp|^2_{g'}}{(\vp-\inf_M\vp +1)^3}
-An-n,
\end{split}
\end{equation}
writing $\Delta'$ for the complex Laplacian associated to $g'$.
To calculate $\Delta'\log \tr{g}{g'}$ we go back to the calculations in Proposition \ref{prop:bigcalc}
where $\hat{g}=g_0$ is now replaced by $g$ and $g'$ takes the role of the evolving metric there. With these
substitutions \eqref{laplac}, \eqref{laplac2} and \eqref{later} together read
\[
\begin{split}
\Delta' \tr{g}g'&=g'^{\ov{j}i}g^{\ov{\ell}k}\nabla_{\ov{\ell}}\nabla_k g'_{i\ov{j}}
+g'^{\ov{j}i} \nabla_i \ov{T^\ell_{j\ell}}+g'^{\ov{j}i}g^{\ov{\ell}k} g_{p \ov{j}}\nabla_{\ov{\ell}} T^p_{ik} \\
&\ \ \  -g'^{\ov{j}i}g^{\ov{\ell}k}g'_{k\ov{q}} (\nabla_i\ov{T^q_{j\ell}}-R_{i\ov{\ell}p\ov{j}}g^{\ov{q}p})\\
&\ \ \ -R
-2\mathrm{Re}\left(g'^{\ov{j}i}g^{\ov{\ell}k} T^p_{ik}\nabla_{\ov{\ell}}g'_{p\ov{j}}\right)\\
&\ \ \ -g'^{\ov{j}i}g^{\ov{\ell}k}T^p_{ik}\ov{T^q_{j\ell}}g_{p\ov{q}}
+g'^{\ov{j}i}g^{\ov{\ell}k}T^p_{ik}\ov{T^q_{j\ell}}g'_{p\ov{q}},
\end{split}\]
where $R=g^{\ov{\ell}k}\RC_{k\ov{\ell}}=g^{\ov{j}i}g^{\ov{\ell}k}R_{k\ov{\ell}i\ov{j}}$ is the Chern scalar curvature of $g$.
On the other hand by applying $\Delta\log$ to \eqref{maell2} we get
$$\Delta F-R=g^{\ov{\ell}k}\de_k\de_{\ov{\ell}}
\log\det(g')=g'^{\ov{j}i}g^{\ov{\ell}k}\de_k\de_{\ov{\ell}} g'_{i\ov{j}}
-g'^{\ov{j}p}g'^{\ov{q}i}g^{\ov{\ell}k}\de_k g'_{i\ov{j}}\de_{\ov{\ell}}g'_{p\ov{q}},$$
and converting these into covariant derivatives (as in the argument for (\ref{ddttrace}) in the
proof of Proposition \ref{prop:bigcalc}) we get
$$\Delta F=g'^{\ov{j}i}g^{\ov{\ell}k}\nabla_{\ov{\ell}}\nabla_k g'_{i\ov{j}}
-g'^{\ov{j}p}g'^{\ov{q}i}g^{\ov{\ell}k}\nabla_k g'_{i\ov{j}}\nabla_{\ov{\ell}}g'_{p\ov{q}},$$
and so
\[
\begin{split}
\Delta' \log\tr{g}g'&=\frac{1}{\tr{g}g'}\Bigg(\bigg[g'^{\ov{j}p}g'^{\ov{q}i}g^{\ov{\ell}k}\nabla_k g'_{i\ov{j}}\nabla_{\ov{\ell}}g'_{p\ov{q}}
-\frac{1}{\tr{g}g'} g'^{\ov{\ell}k}\nabla_k \tr{g}{g'}\nabla_{\ov{\ell}}\tr{g}{g'}\\
&\ \ \ +2\mathrm{Re}\left(g'^{\ov{j}i}g^{\ov{\ell}k} T^p_{ki}\nabla_{\ov{\ell}}g'_{p\ov{j}}\right)
+g'^{\ov{j}i}g^{\ov{\ell}k}T^p_{ik}\ov{T^q_{j\ell}}g'_{p\ov{q}}\bigg]
+\Delta F- R\\
&\ \ \ +g'^{\ov{j}i} \nabla_i \ov{T^\ell_{j\ell}}+g'^{\ov{j}i}g^{\ov{\ell}k} g_{p \ov{j}}\nabla_{\ov{\ell}} T^p_{ik} -g'^{\ov{j}i}g^{\ov{\ell}k}g'_{k\ov{q}} (\nabla_i\ov{T^q_{j\ell}}
-R_{i\ov{\ell}p\ov{j}}g^{\ov{q}p})\\
&\ \ \ -g'^{\ov{j}i}g^{\ov{\ell}k}T^p_{ik}\ov{T^q_{j\ell}}g_{p\ov{q}}
\Bigg).
\end{split}\]
The Cauchy-Schwarz argument from \eqref{later2} shows that the quantity inside square brackets equals
$$\bigg[\cdots\bigg]=K+2\mathrm{Re}\left(
g'^{\ov{q}k} T^i_{ik} \frac{\nabla_{\ov{q}} \tr{g}g'}{\tr{g}g'}\right),$$
and $K\geq 0$ is the same quantity as in \eqref{later2}. Putting these together we have
\begin{equation}\label{step2}
\Delta' \log\tr{g}g'\geq \frac{2}{(\tr{g}g')^2}\mathrm{Re}\left(g'^{\ov{q}k} T^i_{ik} \nabla_{\ov{q}} \tr{g}g' \right)-C\tr{g'}g-C,
\end{equation}
where we used the fact that $\tr{g}g'\geq C^{-1}$ which is a simple consequence of \eqref{maell2} and the arithmetic-geometric means inequality.  Suppose that $Q$ achieves its maximum at a point $x \in M$.  Then at $x$ we have $\de_i Q=0$ and hence
$$\frac{1}{\tr{g}{g'}}\de_i\tr{g}{g'} -A\de_i\vp -\frac{1}{(\vp-\inf_M\vp +1)^2}\de_i\vp=0,$$
therefore
\[\begin{split}
&\left|\frac{2}{(\tr{g}g')^2}\mathrm{Re}\left(g'^{\ov{q}k} T^i_{ik} \nabla_{\ov{q}} \tr{g}g' \right)\right|
\\
&=\left| \frac{2}{\tr{g}{g'}}
\textrm{Re} \left(\left(A+\frac{1}{(\vp-\inf_M\vp +1)^2} \right)g'^{\ov{q}k} T^i_{ik} \nabla_{\ov{q}} \vp\right) \right|\\
&\leq \frac{|\de\vp|^2_{g'}}{(\vp-\inf_M\vp +1)^3}+CA^2(\vp-\inf_M\vp +1)^3\frac{\tr{g'}{g}}{(\tr{g}{g'})^2}.
\end{split}\]
If at  $x$ we have $(\tr{g}{g'})^2(x)\leq A^2(\vp(x)-\inf_M\vp +1)^3$ then
we also have (using that $\frac{3}{2}\log t\leq t$ for $t>0$)
\[\begin{split}
Q\leq Q(x)&\leq \frac{3}{2}\log (\vp(x)-\inf_M\vp +1)+\log A-A\vp(x)+1\\
&\leq -(A-1)\vp(x)-\inf_M\vp +\log A+2\leq -A\inf_M\vp+\log A+2,
\end{split}\]
and so in this case \eqref{c2} follows immediately.

Otherwise, we have  $(\tr{g}{g'})^2(x)\geq A^2(\vp(x)-\inf_M\vp +1)^3$ and so at $x$,
\begin{equation}\label{step3}
\left|\frac{2}{(\tr{g}g')^2}\mathrm{Re}\left(g'^{\ov{q}k} T^i_{ik} \nabla_{\ov{q}} \tr{g}g' \right)\right|
\leq \frac{|\de\vp|^2_{g'}}{(\vp-\inf_M\vp +1)^3}+C\tr{g'}{g}.
\end{equation}
Combining \eqref{step1}, \eqref{step2} and \eqref{step3} we get, at $x$,
$$0\geq \Delta'Q\geq A\tr{g'}{g}-C\tr{g'}{g}-An-n-C\geq \tr{g'}{g}-C,$$
if $A$ is chosen sufficiently large. From this \eqref{c2} follows easily.
\end{proof}

\end{document}